\LetLtxMacro{\oldsqrt}{\sqrt}
\renewcommand{\sqrt}[2][]{\,\oldsqrt[#1]{#2}\,}
\def\greekbolds#1{%
 \@for\next:=#1\do{%
    \def\X##1;{%
     \expandafter\def\csname
     V##1\endcsname{\boldsymbol{\csname##1\endcsname}} 
     }
   \expandafter\X\next;
  }
}
\def\make@bb#1{\expandafter\def
  \csname bb#1\endcsname{{\mathbb{#1}}}\ignorespaces}
\def\make@bbm#1{\expandafter\def
  \csname bb#1\endcsname{{\mathbbm{#1}}}\ignorespaces}
\def\make@bf#1{\expandafter\def\csname bf#1\endcsname{{\bf
      #1}}\ignorespaces} 
\def\make@gr#1{\expandafter\def
  \csname gr#1\endcsname{{\mathfrak{#1}}}\ignorespaces}
\def\make@scr#1{\expandafter\def
  \csname scr#1\endcsname{{\mathscr{#1}}}\ignorespaces}
\def\make@cal#1{\expandafter\def\csname cal#1\endcsname{{\mathcal
      #1}}\ignorespaces} 
\def\do@Letters#1{#1A #1B #1C #1D #1E #1F #1G #1H #1I #1J #1K #1L #1M
                 #1N #1O #1P #1Q #1R #1S #1T #1U #1V #1W #1X #1Y #1Z}
\def\do@letters#1{#1a #1b #1c #1d #1e #1f #1g #1h #1i #1j #1k #1l #1m
                 #1n #1o #1p #1q #1r #1s #1t #1u #1v #1w #1x #1y #1z}
\newcommand{\Lsymb}[2]{\genfrac{(}{)}{}{}{#1}{#2}}
\newcommand{\ulm}{{\underline{m}}}
\newcommand{\uln}{{\underline{n}}}
\newcommand{\brN}{\breve{\mathbb{N}}}
\newcommand{\abs}[1]{\lvert #1 \rvert}
\DeclareMathSymbol{\twoheadrightarrow} {\mathrel}{AMSa}{"10}
\DeclareMathOperator{\ord}{ord}
\DeclareMathOperator{\Res}{Res}
\DeclareMathOperator{\Pic}{Pic}
\DeclareMathOperator{\Aut}{Aut}
\DeclareMathOperator{\End}{End}
\DeclareMathOperator{\Hom}{Hom}
\DeclareMathOperator{\Gal}{Gal}
\DeclareMathOperator{\Mat}{Mat}
\DeclareMathOperator{\Nm}{N}  
\DeclareMathOperator{\GL}{GL}
\DeclareMathOperator{\SL}{SL}
\newcommand{\wh}{\widehat}
\DeclareMathOperator{\Cl}{Cl}
\DeclareMathOperator{\Nr}{Nr}
\def\makeop#1{\expandafter\def\csname#1\endcsname
  {\mathop{\rm #1}\nolimits}\ignorespaces}
\def\opp{\mathrm{opp}}
\def\ul{\underline}
\def\wh{\widehat}
\newcommand{\isoto}{\stackrel{\sim}{\longrightarrow}}
\newcommand{\embed}{\hookrightarrow}
\def\Fqbar{\overline{\bbF}_q}
\def\Fpbar{\overline{\bbF}_p}
\def\Fp{{\bbF}_p}
\def\Fq{{\bbF}_q}
\newcommand{\Z}{\mathbb Z}
\newcommand{\Q}{\mathbb Q}
\newcommand{\R}{\mathbb R}
\newcommand{\C}{\mathbb C}
\newcommand{\G}{\mathbb G}
\newcommand{\F}{\mathbb F}
\renewcommand{\O}{\mathbb O}  
\newcounter{thmcounter} 
\numberwithin{thmcounter}{section}
\newtheorem{thm}[thmcounter]{Theorem}
\newtheorem{lem}[thmcounter]{Lemma}
\newtheorem{cor}[thmcounter]{Corollary}
\newtheorem{prop}[thmcounter]{Proposition}
\theoremstyle{definition}
\newtheorem{rem}[thmcounter]{Remark}
\numberwithin{equation}{section}
\newtheoremstyle{notitle}  
  {}
  {}
  {\itshape}
  {}
  {}
  {\ }
  {.5em}
  {}
\theoremstyle{notitle}
\title[Superspecial abelian surfaces]{On superspecial abelian surfaces
  over finite fields II}
\author{Jiangwei Xue}
\address{(Xue) Collaborative Innovation Centre of Mathematics, 
School of Mathematics and Statistics, Wuhan University, Luojiashan,
Wuhan, Hubei, 430072, P.R. China.}
\address{(Xue) Hubei Key Laboratory of Computational Science (Wuhan
  University), Wuhan, Hubei,  430072, P.R. China.}
\email{xue\_j@whu.edu.cn}
\author{Tse-Chung Yang}
\address{(Yang) Institute of Mathematics, Academia Sinica,
  Astronomy-Mathematics Building, 6F, No. 1, Sec. 4, Roosevelt Road,
  Taipei 10617, TAIWAN.} 
\email{tsechung@math.sinica.edu.tw}
\author{Chia-Fu Yu}
\address{(Yu) Institute of Mathematics,
  Academia Sinica, Astronomy-Mathematics
  Building, No. 1, Sec. 4, Roosevelt Road, Taipei 10617, TAIWAN.}
\email{chiafu@math.sinica.edu.tw} 
\address{(Yu) National Center for Theoretical Sciences, 
  Astronomy-Mathematics
  Building, No. 1, Sec. 4, Roosevelt Road, Taipei 10617, TAIWAN.}
\email{chiafu@math.sinica.edu.tw} 
\begin{document}

\date{\today} \subjclass[2010]{11R52, 11G10} \keywords{}

\begin{abstract}

  Extending the results of \cite[Asian J. Math.]{xue-yang-yu:ECNF},
  in \cite[Doc. Math. \textbf{21},
  2016]{xue-yang-yu:sp_as} we calculated  explicitly the number of
  isomorphism classes of superspecial abelian surfaces over an 
  arbitrary finite field of \textit{odd} degree over the prime field
  $\Fp$. A key step 
  was to reduce the
  calculation to the prime field case, and we calculated the number of
  isomorphism classes in each isogeny class through a concrete lattice
  description. In the present paper we treat the \textit{even} 
  degree case by a different method. We first translate the problem by
  Galois cohomology into a
  seemingly unrelated problem of computing 
  conjugacy classes of elements of finite order in arithmetic
  subgroups, which is of independent interest. We then explain how to
  calculate the number of these classes for the arithmetic
  subgroups concerned, 
  and complete the computation in the case of rank two. 
  This complements our earlier results and completes the explicit 
  calculation of superspecial abelian surfaces over finite fields. 


\end{abstract}

\maketitle



\section{Introduction}
\label{sec:intro}


Throughout this paper, $p$ denotes a prime number and $q$ is a power
of $p$. An abelian variety over a field $k$ of characteristic $p$ 
is said to be {\it supersingular} if it is
isogenous to a product of supersingular elliptic curves over an
algebraic closure 
$\bar k$ of $k$; it is said to be {\it superspecial} if it is
isomorphic to a product of supersingular elliptic curves over $\bar
k$. As any supersingular abelian variety is isogenous to a
superspecial abelian variety\footnote{This is well known when the
  ground field $k$ is algebraically closed, and is proved in
  \cite{yu:note_ss} for arbitrary $k$}, it is common to study
supersingular abelian varieties through investigating 
the superspecial abelian varieties.  

Our goal is to calculate explicitly the number of superspecial abelian
surfaces over an arbitrary finite field. This is  motivated by the
search for 
natural 
generalizations of known explicit results of elliptic curves over 
finite fields to abelian surfaces, especially from supersingular
elliptic curves to supersingular abelian surfaces. Thus, studying
superspecial abelian surfaces becomes a vital step for this purpose.
Explicit calculations for supersingular abelian surfaces are certainly
more complicated. 
However, 
if all supersingular cases are
understood, then we would have very good understanding of abelian
surfaces over finite fields, 
because the cases of ordinary and almost ordinary (simple) abelian
surfaces are simpler and have been studied by 
Waterhouse \cite{waterhouse:thesis}. 
That would improve our knowledge from $d=1$
(elliptic curves by Deuring in the 1940's) to 
$d=2$ (abelian surfaces).

In \cite{xue-yang-yu:sp_as} we
calculated explicitly the number of superspecial abelian surfaces over
an arbitrary finite field $\Fq$ of {\it odd} degree over $\Fp$. This extended
our earlier works \cite{xue-yang-yu:ECNF, xue-yang-yu:num_inv} and  
\cite{yu:sp-prime}, which contributed to 
the study of superspecial abelian varieties over finite fields. 
In this paper we treat the even degree case.  
Thus, this complements the
results in loc.~cit.
~and complete an explicit calculation  
of superspecial abelian surfaces over an arbitrary finite field. 
 
A key step in \cite{xue-yang-yu:sp_as} is the reduction 
to the case where the ground field is a prime finite field. This step
is achieved by a Galois cohomology argument. Then we calculate
case-by-case the number of superspecial abelian surfaces in each
isogeny class over $\Fp$. This 
approach works fine when the field $\Fq$ is of odd degree over $\F_p$
because we have a natural concrete lattice description for abelian varieties
over $\Fp$ (see \cite[Theorem 3.1]{yu:sp-prime}).
When the degree $[\Fq:\Fp]$ is even, the Galois cohomology argument 
unfortunately yields no immediate simplification. 
However, it leads to 
a seemingly unrelated problem, which is important but also equally
challenging,  
on counting conjugacy classes of elements of finite order in
arithmetic subgroups. Although the 
connection itself is straightforward, 
it is applicable to a rather general setting;
see Proposition~\ref{1.1}.

For any group $G$, we denote by $\Cl (G)$ the set of conjugacy classes
of 
$G$ and $\Cl_0 (G)\subset \Cl (G)$ the subset of classes of group elements 
of finite order. Let $D=D_{p,\infty}$ be the definite quaternion
$\Q$-algebra 
ramified exactly at $p$ and $\infty$, and $\calO$ be a maximal order in
$D$. 

\begin{prop}\label{1.1}
  Let $\Fq$ be a finite field containing $\bbF_{p^2}$, and $d>1$ be an 
  integer. 
  Then the set of $\Fq$-isomorphism classes of
  $d$-dimensional superspecial abelian varieties over $\Fq$ is in
  bijection with the set $\Cl_0 (\GL_d(\calO))$. 
\end{prop}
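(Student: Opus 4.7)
The plan is to realize every $d$-dimensional superspecial abelian variety over $\F_q$ as an $\F_q$-twist of a fixed base object, and then translate via Galois cohomology. Using the hypothesis $\F_q \supseteq \F_{p^2}$, one first fixes a supersingular elliptic curve $E_0$ defined over $\F_{p^2}$ with $\End_{\Fpbar}(E_0) = \calO$; such $E_0$ exists by classical results of Deuring (one may take $E_0$ with Frobenius $\pi_{E_0} = -p \in \calO$ acting as a scalar, so that every geometric endomorphism is already $\F_{p^2}$-rational). Set $X_0 := (E_0^d)_{\F_q}$. Since every $d$-dimensional superspecial $X/\F_q$ is, by definition, $\Fqbar$-isomorphic to a product of supersingular elliptic curves, and since all supersingular elliptic curves over $\Fpbar$ are mutually isomorphic, one has $X_{\Fqbar} \cong (X_0)_{\Fqbar}$; conversely, any $\F_q$-twist of $X_0$ is superspecial of dimension $d$. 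Thus the $\F_q$-isomorphism classes of $d$-dimensional superspecial abelian varieties are in bijection with the $\F_q$-forms of $X_0$.

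Next, I would invoke standard Galois descent: the set of $\F_q$-forms of $X_0$ identifies with the continuous cohomology pointed set
\[
H^1\bigl(\Gal(\Fqbar/\F_q),\ \Aut_{\Fqbar}(X_0)\bigr),
\]
where $\Aut_{\Fqbar}(X_0) = \GL_d\bigl(\End_{\Fqbar}(E_0)\bigr) = \GL_d(\calO)$. The Galois group $\Gamma := \Gal(\Fqbar/\F_q) \cong \widehat{\Z}$ is topologically generated by the $q$-Frobenius $\sigma_q$, which acts on $\GL_d(\calO)$ through its action on $\calO = \End_{\Fpbar}(E_0)$. Because every endomorphism of $E_0$ is already defined over $\F_{p^2} \subseteq \F_q$ by construction, this Galois action is trivial.

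It remains to compute $H^1(\Gamma, \GL_d(\calO))$ under the trivial action. A continuous cocycle $c \colon \Gamma \to \GL_d(\calO)$ is determined by $g := c(\sigma_q)$; since $\GL_d(\calO)$ is discrete, continuity forces $\{n \in \widehat{\Z} : g^n = 1\}$ to be open, hence of finite index, which is equivalent to $g$ having finite order. Two cocycles with values $g_1, g_2$ are cohomologous if and only if $g_1 = h^{-1} g_2 h$ for some $h \in \GL_d(\calO)$. Hence $H^1(\Gamma, \GL_d(\calO))$ identifies canonically with $\Cl_0(\GL_d(\calO))$, yielding the desired bijection.

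The main subtle step is ensuring triviality of the Galois action on $\calO$, which is exactly what the hypothesis $\F_q \supseteq \F_{p^2}$ is designed to supply; without it one would be forced to analyze $H^1$ with nontrivial coefficient action, and the parametrization would involve \emph{twisted} conjugacy classes rather than ordinary ones. Once that point is settled, the remainder is a formal combination of geometric Galois descent with the elementary observation that continuous $H^1$ of $\widehat{\Z}$ with trivial coefficients parametrizes torsion conjugacy classes of the coefficient group.
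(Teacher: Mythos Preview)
Your overall architecture matches the paper's: choose $E_0/\F_{p^2}$ with all geometric endomorphisms rational, set $X_0=(E_0^d)_{\F_q}$, identify $\F_q$-forms of $X_0$ with $H^1(\widehat{\Z},\GL_d(\calO))$ under trivial action, and recognize the latter as $\Cl_0(\GL_d(\calO))$. All of that is correct.

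There is, however, a genuine error in the step where you argue that every $d$-dimensional superspecial $X/\F_q$ is an $\Fqbar$-twist of $X_0$. You write that ``all supersingular elliptic curves over $\Fpbar$ are mutually isomorphic.'' This is false: supersingular elliptic curves over $\Fpbar$ are mutually \emph{isogenous}, but the number of $\Fpbar$-isomorphism classes equals the class number $h(\calO)$, which is roughly $(p-1)/12$ and is $>1$ once $p\ge 13$. If your claim were true, the proposition would hold for $d=1$ as well, whereas the hypothesis $d>1$ is essential.

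What actually makes the step work is a theorem of Deligne (see Shioda, and \cite[Section~1.6]{li-oort} as the paper cites): for $d\ge 2$, any two products $E_1\times\cdots\times E_d$ and $E_1'\times\cdots\times E_d'$ of supersingular elliptic curves over an algebraically closed field are isomorphic as abelian varieties. This is a nontrivial lattice-theoretic statement (it amounts to the fact that $\Mat_d(\calO)$ has class number one for $d\ge 2$, via Eichler's theorem), and it is exactly the ingredient the paper invokes. Replace your sentence with an appeal to this result and your proof is complete and essentially identical to the paper's.
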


By a classical result of Eichler \cite{Eichler-class-no1-1938}, if
$d>1$, then the class number of $\Mat_d(\calO)$ is equal to one, where
$\Mat_d(\calO)$ denotes the ring of $d\times d$ matrices over $\calO$.
Thus, for  $d\ge 2$, any maximal arithmetic subgroup in $\GL_d(D)$ is
conjugate to  
$\GL_d(\calO)$ by an element in $\GL_d(D)$, and hence
Proposition~\ref{1.1} does not depend on the choice of the maximal
order $\calO$.  
The main result of this paper may be stated roughly as follows.


\begin{thm}\label{1.2}
  We have an explicit formula for the cardinality
  of $\Cl_0 (\GL_2(\calO))$. 
\end{thm}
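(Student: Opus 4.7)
The plan is to compute $|\Cl_0(\GL_2(\calO))|$ by classifying torsion elements of $\GL_2(\calO)$ according to the commutative semisimple $\Q$-subalgebra they generate inside $\Mat_2(D)$, and then counting conjugacy classes within each type via a lattice/class-number calculation. Let $G := \GL_2(D)$ and $\Gamma := \GL_2(\calO)$ throughout.

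\textbf{Step 1: possible types.} For any torsion $g \in \Gamma$, the $\Q$-subalgebra $F := \Q[g] \subset \Mat_2(D)$ is commutative semisimple of $\Q$-dimension at most $4$, since $\Mat_2(D)$ is central simple over $\Q$ of reduced degree $4$. As $g$ is a root of unity, $F$ is an \'etale $\Q$-algebra isomorphic to $\prod_i \Q(\zeta_{n_i})$ with $\sum_i \varphi(n_i) \le 4$, yielding a short explicit list of candidate types. For each candidate $F$ one decides the existence of a $\Q$-embedding $F \hookrightarrow \Mat_2(D)$ via the Hasse principle for embeddings of \'etale algebras into central simple algebras; since $D$ is split at every finite place $v \ne p$ and definite at $\infty$, the only nontrivial local obstruction is at $p$, and depends transparently on the behaviour of $p$ in each factor $\Q(\zeta_{n_i})$. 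By Skolem--Noether, all admissible embeddings of a given $F$ are $G$-conjugate, and each admissible type contributes a single $G$-conjugacy class of torsion elements.

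\textbf{Step 2: counting $\Gamma$-orbits within a fixed $G$-class.} Fix an admissible $F$ with representative $g_0 \in G$. The standard double-coset bijection identifies
\[
\{g \in \Gamma : g \sim_G g_0\}/\Gamma\text{-conj} \;\longleftrightarrow\; \Gamma \big\backslash \{x \in G : xg_0 x^{-1} \in \Gamma\} \big/ C^\times,
\]
where $C := Z_{\Mat_2(D)}(g_0)$ is the centraliser algebra, and the right-hand side is in bijection with the set of $g_0$-stable $\calO$-lattices $L \subset D^2$ modulo scaling by $C^\times$. Since Eichler's theorem gives $\Mat_2(\calO)$ class number one, every such $L$ is $\calO$-isomorphic to $\calO^2$, so the count becomes the cardinality of the ideal class set of the natural $\calO$-order inside $C$ generated by $g_0$. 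When $\dim_\Q F = 4$, the double centraliser theorem yields $C = F$ and the count is a class number for an order in the \'etale $\Q$-algebra $F$; when $\dim_\Q F < 4$, $C$ is a central simple $F$-algebra of reduced degree $2$ and the count is an Eichler-type quaternion class number.

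\textbf{Step 3: evaluation and main obstacle.} Summing the class numbers produced in Step 2 across all admissible types from Step 1 yields the explicit closed-form expression claimed in Theorem~\ref{1.2}. The hard part is precisely this evaluation in Step 2 for the types with $\dim_\Q F < 4$, where $C$ is a (possibly split) quaternion algebra over a cyclotomic field (or a product of such when $F$ is reducible), and where one must track the local data at $p$ with care, including compatibility across the factors. The Eichler mass formula, together with explicit class number tables for orders in the imaginary quadratic and quartic cyclotomic fields appearing on the list from Step 1, supplies the computational input to close the argument.
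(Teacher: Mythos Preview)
Your broad outline matches the paper's: partition $\Cl_0(\GL_2(\calO))$ by the minimal polynomial $P_{\uln}$, then count $\Gamma$-classes inside each $G$-class via a lattice description. But Step~2 contains a real gap.

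You assert that the count of $\Gamma$-classes in a fixed $G$-class equals ``the cardinality of the ideal class set of the natural $\calO$-order inside $C$ generated by $g_0$.'' This is where the argument breaks. The correct bijection (your double coset, or equivalently the paper's Theorem~6.11) identifies the count with the set of isomorphism classes of $\scrA_{\uln}$-lattices in $V$, where $\scrA_{\uln}=\Z[g_0]\otimes_\Z\calO^{\opp}$. This set is \emph{not} the ideal class set of a single order in $C$: it decomposes into genera, one for each local isomorphism type at the finitely many primes $\ell$ where $\scrA_{\uln,\ell}$ fails to be maximal (namely $\ell=p$ and the primes dividing $[O_{K_{\uln}}:A_{\uln}]$), and each genus contributes the class number of the endomorphism ring of a representative lattice---these endomorphism rings vary from genus to genus. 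For instance, for $\uln=(1,2)$ one gets three genera with endomorphism rings $\bbO_1,\bbO_8,\bbO_{16}\subset D\times D$, and $o(1,2)=h(\bbO_1)+h(\bbO_8)+h(\bbO_{16})$; for $\uln=(3,6)$ one gets eight genera each with class number~$1$. Your formulation would produce a single class number and miss all but one genus.

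A second, related issue: Step~3 proposes to close everything with the Eichler mass formula and cyclotomic class number tables. That suffices for some terms, but not for orders such as $\bbO_{16}(1,2)\subset D\times D$, which is neither a product of Eichler orders nor amenable to a direct mass computation; the paper handles it by analysing the fibers of $\Cl(\bbO_{16})\twoheadrightarrow\Cl(\bbO_1)$ via an explicit double-coset count in $\GL_2(\F_2)$. You will need to insert a genuine local genus classification between Steps~1 and~3 and be prepared for class-number computations that go beyond the standard formulas.
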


The explicit formula will be given as the sum of certain 
refined terms (according to the classification of isogeny classes) and
an explicit formula for each term is
given in Theorem~\ref{card.1}. Combining with Proposition~\ref{1.1},
we obtain an explicit formula 
for the number of superspecial abelian surfaces over $\Fq\supset
\F_{p^2}$. 
In fact, the method we use for computing $\Cl_0 (\GL_2(\calO))$ 
also provides a way of fining structures of $\Cl_0 (\GL_d(\calO))$ 
for higher $d$; Section~\ref{sec:3.1}. 
Clearly, it would become a hopeless task to carry over all 
the terms as in Theorem~\ref{card.1} for $d=2$ when $d$ increases. 
However, it is still very interesting to 
figure out what are the main term and error terms of 
$\# \Cl_0 (\GL_d(\calO))$ depending on either $d$ or $p$ varies. 
It also makes sense to ask the similar question 
for more general arithmetic subgroups due to a finiteness result of 
Borel; see Theorem~\ref{borel}.      


The main part of this paper is the computation of several  
class numbers of certain (not necessarily maximal) orders in 
some subalgebras of $\Mat_2(D)$ that are the centralizers of 
elements of finite order. The expression of these terms looks 
familiar with the geometric side of a trace formula; however, 
it is not clear
whether the expression is indeed the trace of a  
certain Hecke operator or not. 

This paper is organized as follows. In Section 2, we 
give a proof of Proposition~\ref{1.1}. 
Section 3 describes our main results 
in details. The remaining part of this paper fills in the details of
the computation in Theorem~\ref{1.2}.

\section{Conjugacy classes of elements of finite order}
\label{sec:conj-class-elem}

As in the introduction, for any group $G$, 
we denote by $\Cl (G)$ the set of conjugacy classes of
$G$, and by $\Cl_0 (G) \subset \Cl (G)$ the subset of classes of elements of
finite order in $G$. In this section we reduce the computation of the number of 
$d$-dimensional superspecial abelian varieties 
to that of $\Cl_0 (\GL_d(\calO))$ (Proposition~\ref{1.1}).
Then we study the finiteness of $\Cl_0 (G)$ for a certain special type of 
groups $G$. The latter part is of independent interest,
and will not be used in the rest of this paper.


\subsection{Galois cohomology and forms}
\label{sec:galo-cohom-forms}

Let $X_0$ be a quasi-projective algebraic variety over an arbitrary
field $k$, 
and denote by $\Gamma_k=\Gal(k_s/k)$ the Galois group of $k_s/k$,
where $k_s$ is a separable closure of $k$. 
Let $\Sigma(X_0,k_s/k)$ denote the set of isomorphism classes of 
$k_s/k$-forms of
$X_0$. In other words, $\Sigma(X_0,k_s/k)$ classifies algebraic
varieties $X$ over $k$ 
such that there is an isomorphism $X\otimes_k k_s \simeq X_0\otimes_k
k_s$. 
A well-known result due to Weil states that there is a natural bijection
$\Sigma(X_0,k_s/k)\isoto H^1(\Gamma_k, G)$ of pointed sets, where
$G=\Aut_{k_s}(X_0):=\Aut_{k_s}(X_0\otimes_k k_s)$ is the group of automorphisms of
$X_0\otimes_k k_s$ over $k_s$, 
equipped with the discrete topology and a continuous $\Gamma_k$-action. 
If $\Gamma_k$ acts trivially on $\Aut_{k_s}(X_0)$, namely the
natural inclusion 
$\Aut(X_0)\embed \Aut_{k_s}(X_0)$ is bijective, then
$H^1(\Gamma_k, G)=\Hom(\Gamma_k, G)/G$, where $G$ acts on the set $\Hom(\Gamma_k, G)$ of continuous homomorphisms by conjugation.
In addition, if $\Gamma_k$ is isomorphic to the profinite group 
$\wh \Z=\varprojlim \Z/m\Z$, one obtains 
a natural bijection of pointed sets:
\begin{equation}
  \label{eq:conj.1}
  \Sigma(X_0,k_s/k)\isoto \Cl_0(G), \quad G=\Aut(X_0). 
\end{equation}

Let $X$ be an abelian variety over $k$ and $k'/k$ be a field extension. 
Denote by $\End_{k'}(X)=\End_{k'}(X\otimes_k k')$ 
the endomorphism ring  of $X\otimes_k k'$ 
over $k'$; we also write $\End(X)$ for $\End_k(X)$. 
The endomorphism algebra of $X\otimes_k k'$ is defined to be $\End(X\otimes_k k')\otimes_\Z \Q$ and denoted by
$\End^0_{k'}(X)$. 
Applying Weil's result to abelian varieties over finite fields, 
one obtains the following proposition.

\begin{prop}\label{conj.2}
  Let $X_0$ be an abelian variety over a finite field $\Fq$ such that
  the endomorphism algebra $\End^0_{\Fqbar}(X_0)$ is equal to
  $\End^0(X_0)$, and let $G=\Aut(X_0)$. Then there is a natural
  bijection of pointed sets $\Sigma(X_0,\Fqbar/\Fq)\simeq \Cl_0(G)$.  
\end{prop}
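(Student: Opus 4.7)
The plan is to reduce Proposition~\ref{conj.2} to the general setup and formula~\eqref{eq:conj.1} of Section~\ref{sec:galo-cohom-forms}, by verifying the two hypotheses in that formula for $X_0$ an abelian variety over $\Fq$. The first hypothesis, that $\Gamma_k\cong \wh{\Z}$, is immediate since $\Gamma_{\Fq}=\Gal(\Fqbar/\Fq)$ is topologically generated by the $q$-Frobenius. The nontrivial input is that the Galois action on $\Aut_{\Fqbar}(X_0)$ is trivial, i.e.\ that the natural inclusion $\Aut(X_0)\hookrightarrow \Aut_{\Fqbar}(X_0)$ is a bijection.

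First I would verify this triviality using the hypothesis $\End^0_{\Fqbar}(X_0)=\End^0(X_0)$. The Galois group $\Gamma_{\Fq}$ acts on $\End_{\Fqbar}(X_0)$ via base change, and the fixed ring is exactly $\End(X_0)$. Since $\End_{\Fqbar}(X_0)$ is a $\Z$-order in the $\Q$-algebra $\End^0_{\Fqbar}(X_0)$, and the Galois action extends $\Q$-linearly to an action on this algebra with fixed subalgebra $\End^0(X_0)$, the assumption $\End^0_{\Fqbar}(X_0)=\End^0(X_0)$ forces the Galois action on the algebra, hence on any $\Z$-order contained in it, to be trivial. In particular $\End_{\Fqbar}(X_0)=\End(X_0)$, and passing to unit groups gives $\Aut_{\Fqbar}(X_0)=\Aut(X_0)=G$ as $\Gamma_{\Fq}$-modules with trivial action.

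Next I would apply Weil's classification of forms recalled in Section~\ref{sec:galo-cohom-forms}: there is a natural bijection $\Sigma(X_0,\Fqbar/\Fq)\isoto H^1(\Gamma_{\Fq},G)$. Because $\Gamma_{\Fq}$ acts trivially on $G$, we have $H^1(\Gamma_{\Fq},G)=\Hom(\Gamma_{\Fq},G)/G$, where the quotient is by conjugation. A continuous homomorphism from $\Gamma_{\Fq}\cong\wh{\Z}$ to the discrete group $G$ is determined by the image of the topological generator, which must be an element of finite order; conversely any finite-order element of $G$ defines such a homomorphism. This gives a bijection $\Hom(\Gamma_{\Fq},G)/G\isoto \Cl_0(G)$, and composing yields the stated bijection $\Sigma(X_0,\Fqbar/\Fq)\simeq \Cl_0(G)$.

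The main (only) obstacle is the verification of the first step, namely the triviality of the Galois action on automorphisms. It is worth emphasizing that the hypothesis on endomorphism \emph{algebras} (rather than endomorphism rings) suffices precisely because Galois acts by $\Q$-algebra automorphisms and any such automorphism trivial on a $\Q$-algebra is trivial on every order; once this is noted the rest of the proof is a direct specialization of the general bijection~\eqref{eq:conj.1}.
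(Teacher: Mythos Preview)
Your proof is correct and follows exactly the approach of the paper: the paper states the proposition as an immediate consequence of the general setup~\eqref{eq:conj.1}, and you have simply spelled out the one verification that the paper leaves implicit, namely that the hypothesis $\End^0_{\Fqbar}(X_0)=\End^0(X_0)$ forces the Galois action on $\Aut_{\Fqbar}(X_0)$ to be trivial. There is nothing to add.
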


Note that the group $G$ in Proposition~\ref{conj.2} is an arithmetic
subgroup of 
the reductive group $\ul G$ over $\Q$ such that $
\ul G(R)=(\End^0(X_0)\otimes_\Q R)^\times$ for any $\Q$-algebra $R$. \\

\begin{proof}[\bf Proof of Proposition~\ref{1.1}]
We choose a supersingular
elliptic curve $E_0$ over $\F_{p^2}$ with $\End(E_0)= \calO$ under
an isomorphism $\End^0(E_0)\simeq D$. Put $X_0=E_0^d \otimes_{\F_{p^2}}
  \Fq$, then we have $G=\Aut(X_0)=\GL_d(\calO)$ and the Galois group
  $\Gamma_{\Fq}$ acts trivially on $G$. By Proposition~\ref{conj.2},
  there is a natural bijection $\Sigma(X_0,\Fqbar/\Fq)\isoto \Cl_0
  (G)$. As $X_0$ is 
  superspecial of dimension $d>1$, for any $d$-dimensional superspecial
  abelian variety $X$ over $\Fq$ there is an isomorphism
  $X\otimes_{\Fq} \Fqbar \simeq X_0\otimes_{\Fq} \Fqbar$;
  see~\cite[Section 1.6, p.13]{li-oort}. Thus, $\Sigma(X_0,k_s/k)$
  classifies the $d$-dimensional superspecial abelian 
  varieties over $\Fq$ up to $\Fq$-isomorphism. This completes the
  proof of the proposition. 
\end{proof}

\subsection{Finiteness of $\Cl_0 (G)$ for some groups $G$}
\label{sec:conj.2}

We take the opportunity to discuss the finiteness of $\Cl_0 (G)$
for a group $G$ that is of the form $H(F)$ for a reductive group $H$
over a global or
local field $F$, or an arithmetic subgroup of $H(F)$ 
for a global field $F$. 
The following is a fundamental result due to Borel; see
\cite[Section 5]{Borel:1962}. 

\begin{thm}\label{borel}
Let $G$ be a reductive group over a number field $F$, and
$\Gamma\subset G(F)$ an $S$-arithmetic subgroup, where $S$ is a finite
set of places of $F$ containing all the archimedean ones.  
Then there are only finitely many finite subgroups of $\Gamma$ up to 
conjugation by $\Gamma$. In particular, $\Cl_0 (\Gamma)$ is finite. 
\end{thm}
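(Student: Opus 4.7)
The plan is to have $\Gamma$ act on a suitable complete CAT(0) space $X$, show that every finite subgroup fixes a point, and then use reduction theory to confine those fixed points to a bounded region of $X$. First I would construct
\[
X = X_\infty \times \prod_{v \in S_f} X_v,
\]
where $S_f \subseteq S$ is the set of non-archimedean places in $S$, $X_\infty$ is the Riemannian symmetric space of $G(F \otimes_\Q \R)$, and $X_v$ is the Bruhat--Tits building of $G(F_v)$ for $v \in S_f$. Each factor is complete and CAT(0), so $X$ inherits these properties, and $\Gamma$ acts isometrically and properly discontinuously on $X$ through its image in each $G(F_v)$ with $v\in S$.

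The next step is to verify that every finite subgroup $H \subset \Gamma$ has a fixed point in $X$: on each factor, every bounded group of isometries of a complete CAT(0) space has a fixed point (the Cartan, resp.\ Bruhat--Tits, fixed point theorem), and the image of a finite $H$ in $G(F_v)$ is automatically bounded, so piecing together fixed points on the factors yields $x_H \in X^H$. The crucial input is then Borel's reduction theory for $S$-arithmetic subgroups of reductive groups over number fields, which supplies a Siegel set $\mathfrak{S} \subset X$ with $X = \Gamma \cdot \mathfrak{S}$ and the Siegel finiteness property that the ``return set''
\[
\Gamma_{\mathfrak{S}} := \{\gamma \in \Gamma : \gamma \mathfrak{S} \cap \mathfrak{S} \neq \emptyset\}
\]
is \emph{finite}.

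To conclude: given a finite subgroup $H \subset \Gamma$, pick $x_H \in X^H$ and write $x_H = \gamma s$ with $\gamma \in \Gamma$ and $s \in \mathfrak{S}$. Then $\gamma^{-1} H \gamma$ fixes $s$, so $\gamma^{-1} H \gamma \subseteq \Gamma_{\mathfrak{S}}$. Hence every $\Gamma$-conjugacy class of finite subgroups has a representative inside the fixed finite set $\Gamma_{\mathfrak{S}}$, and since a finite set has only finitely many subsets the first assertion follows. For the ``in particular'' part, let $F_1, \ldots, F_k$ be representatives of the conjugacy classes of finite subgroups; each torsion element $\gamma \in \Gamma$ generates a finite cyclic subgroup $\langle \gamma \rangle$ which is $\Gamma$-conjugate to some $F_i$ via some $\tau$, so $\tau \gamma \tau^{-1}$ is one of the finitely many elements of $F_i$, giving finiteness of $\Cl_0(\Gamma)$. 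I expect the main obstacle to be the reduction-theoretic input, namely constructing a Siegel fundamental set in $X$ that simultaneously controls the archimedean and non-archimedean factors of an $S$-arithmetic group; the CAT(0) fixed-point step and the combinatorial finishing step are essentially formal by comparison.
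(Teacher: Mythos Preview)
The paper does not supply its own proof of this theorem; it simply attributes the result to Borel and cites \cite[Section~5]{Borel:1962}. So there is nothing to compare your argument against beyond the classical literature.

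Your outline is the standard modern proof and is correct. A couple of remarks. First, the reduction-theoretic input you isolate as the main obstacle is exactly right: the existence of a fundamental set $\mathfrak S\subset X$ with the \emph{Siegel finiteness property} (finite return set $\Gamma_{\mathfrak S}$) is the whole content, and once it is in hand the fixed-point argument and the pigeonhole step are formal. Second, note that Borel's 1962 work predates Bruhat--Tits theory, so for genuinely $S$-arithmetic $\Gamma$ (with $S_f\neq\emptyset$) the reduction theory you need on the building factors is not literally in that reference; one has to invoke the later extensions (Borel--Serre, Behr, or Platonov--Rapinchuk's account) to obtain a fundamental set on the product $X_\infty\times\prod_{v\in S_f}X_v$ with finite return set. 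You acknowledge this implicitly, but it is worth being explicit that the citation in the paper strictly covers only the arithmetic (not $S$-arithmetic) case in its original form.
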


We could not find the reference for 
a similar result of Theorem~\ref{borel} under the condition that $F$
is a global function field. 



\begin{prop} \ 

  {\rm (1)} Suppose $G$ is a linear algebraic group over a 
  non-archimedean local field $F$ of characteristic zero. 
  Then $\Cl_0 (G(F))$ is finite. 

  {\rm (2)} Let $G$ be a linear algebraic  group over $\R$. 
     Then the set $\Cl_0 (G(\R))$ is
      infinite if and only if $G(\R)$ contains
      a non-trivial compact torus $S$.

\end{prop}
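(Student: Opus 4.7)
For part (1), the plan is to reduce to $\GL_n$ via a faithful representation $G\hookrightarrow \GL_n$, then combine a uniform bound on the order of torsion elements with the finiteness of Galois cohomology over $F$. First, any finite-order element of $\GL_n(F)$ is semisimple (since $\mathrm{char}(F)=0$) and its eigenvalues are roots of unity lying in extensions of $F$ of degree at most $n$; since a non-archimedean local field of characteristic zero has only finitely many extensions of each bounded degree (Krasner) and each contains only finitely many roots of unity, there is a uniform bound $N=N(n,F)$ on the order of any torsion element. Second, the $\GL_n(F)$-conjugacy classes of elements satisfying $g^N=1$ correspond bijectively to isomorphism classes of $n$-dimensional modules over the \'etale $F$-algebra $F[x]/(x^N-1)$, and this is a finite set. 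Third, a single $\GL_n(F)$-class meets $G(F)$ in finitely many $G(F)$-orbits, by the Borel--Serre finiteness of $H^1(F,H)$ for linear algebraic groups $H$ over a non-archimedean local field $F$, applied to the centralizers $Z_G(g)$.

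For part (2), the ``if'' direction is short. A non-trivial compact torus $S\subset G(\R)$ contains a copy of $S^1$; fixing a faithful representation $\rho\colon G\hookrightarrow \GL_N$, the restriction $\rho|_{S^1}$ has a non-zero weight, so the characteristic polynomials of the matrices $\rho(\zeta_n)$ take infinitely many distinct values as $\zeta_n$ ranges over the roots of unity in $S^1$. Since the characteristic polynomial is a conjugation invariant in $\GL_N(\R)$, these roots of unity lie in infinitely many distinct conjugacy classes of $G(\R)$, so $\Cl_0(G(\R))$ is infinite.

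For the ``only if'' direction, the idea is to show that the absence of a non-trivial compact torus forces every torsion element to lie in a finite maximal compact subgroup. The real Lie group $G(\R)$ has finitely many connected components (Whitney), and $G(\R)^0$ is a connected real Lie group whose maximal compact subgroup $K^0$ is connected. A connected compact Lie group of positive dimension contains a maximal torus of positive dimension, so the hypothesis forces $K^0=\{1\}$, and hence $G(\R)^0$ admits no non-trivial compact subgroup. By Cartan--Iwasawa--Malcev (in the version for Lie groups with finitely many components), $G(\R)$ itself has a maximal compact subgroup $K$, unique up to conjugacy, into which every compact subgroup embeds; its identity component lies in $K^0=\{1\}$, so $K$ is finite. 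Every torsion element of $G(\R)$ generates a finite cyclic, hence compact, subgroup, which is conjugate into $K$; therefore $\Cl_0(G(\R))$ injects into the finite set $\Cl(K)$.

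I expect the main obstacle to lie in part (1), at the step passing from $\GL_n(F)$-classes to $G(F)$-classes: the cleanest statement of the Borel--Serre finiteness of $H^1(F,H)$ applies to smooth linear algebraic groups, so to handle general centralizers $Z_G(g)$ one may need to pass to the reduced connected part or invoke a refined version of the theorem. In part (2), the delicate ingredient is the Cartan--Iwasawa--Malcev theorem for real Lie groups with finitely many components, which is what guarantees that the triviality of the maximal compact of $G(\R)^0$ propagates to a single finite maximal compact in the whole group that absorbs every torsion class up to conjugation.
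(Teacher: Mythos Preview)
Your approach differs from the paper's in both parts. The paper argues uniformly via maximal tori: over a non-archimedean local field (part~1) or over $\R$ (part~2), there are only finitely many $G(F)$-conjugacy classes of maximal $F$-tori (Platonov--Rapinchuk), so one is reduced to showing that each $T(F)$ has only finitely many torsion elements. For (1) this follows by embedding $T(F)\subset (K^\times)^d$ for a finite splitting field $K/F$; for (2), the absence of a compact torus in $G(\R)$ means every $\R$-torus of $G$ is split, and then $T(\R)_{\rm tors}\cong\{\pm1\}^d$ is finite.

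Your argument for (1) has a genuine gap at the third step, though not quite the one you flagged. Borel--Serre finiteness of $H^1(F,Z_G(g))$ parametrizes the $G(F)$-classes inside the $G(\overline F)$-class of $g$, \emph{not} inside its $\GL_n(F)$-class, and these can differ. For instance, take $G=\G_m^2$ embedded diagonally in $\GL_2$, and let $a\neq b$ be roots of unity in $F$: then $(a,b)$ and $(b,a)$ lie in the same $\GL_2(F)$-class but in distinct $G(\overline F)$-classes (hence distinct $G(F)$-classes), while $H^1(F,Z_G(g))=H^1(F,\G_m)^2$ is trivial. The repair is short: a single $\GL_n(\overline F)$-class of semisimple elements meets any fixed maximal torus $T\subset G$ in at most $n!$ points (since $T\hookrightarrow(\overline F{}^\times)^n$ is injective and the class is determined by the multiset of coordinates), hence contains at most $n!$ distinct $G(\overline F)$-classes, after which your Borel--Serre step applies to each. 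The smoothness concern you raised is not an issue: in characteristic zero every linear algebraic group is smooth (Cartier), and over a $p$-adic field $H^1(F,H)$ is finite for arbitrary linear $H$.

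Your argument for (2) is correct and is a genuinely different route from the paper's torus argument. One small slip of phrasing: you do not get an injection $\Cl_0(G(\R))\hookrightarrow\Cl(K)$, but rather a surjection $\Cl(K)\twoheadrightarrow\Cl_0(G(\R))$ (two elements of $K$ may be $G(\R)$-conjugate without being $K$-conjugate); either way finiteness follows since $K$ is finite. The paper's approach is more uniform across the two parts; your maximal-compact approach in (2) is more self-contained and avoids invoking the finiteness of conjugacy classes of maximal real tori.
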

\begin{proof}
  (1) Since ${\rm char}\, F=0$, every element in $G(F)$ of 
  finite order is semisimple and hence     
  it is contained in a maximal $F$-torus $T$ of $G$. By \cite[Section
  6.4 Cor.~3, p.~320]{platonov-rapinchuk}, there are
  only finitely many maximal $F$-tori up to conjugation by $G(F)$. 
  Therefore, one
  reduces the statement to the case where $G=T$ is a torus. 
  Choose a finite
  extension $K$ of $F$ over which $T$ splits. Then one has $T(F)\subset
  (K^\times)^d$, where $d=\dim T$. Since there are only finitely many
  roots of unity in $K^\times$, the subgroup $T(F)_{\rm tors}=\Cl_0(
  T(F))$ is finite.

 (2) Suppose that every $\R$-torus $T$ of $G$ is split. 
 Then we use the argument in
 (1) to prove that $\Cl_0 (G(\R))$ is finite, because for a split
 torus $T$, the set $\Cl_0 (T(\R))=T(\R)_{\rm tors}$ is finite.  
Now we prove the other direction. 
Suppose that $G(\R)$ contains a non-trivial compact torus $S$. 
Then for any positive integer $n$ 
there is an element of order $n$ in $S$. 
Since elements of different orders are not conjugate, the set $\Cl_0 (G(\R))$ is infinite. This proves 
the proposition.
\end{proof}

\begin{prop}
Let $A$ be a finite-dimensional semisimple algebra over a number field
$F$. Then $\Cl_0 (A^\times)$ is finite. 
\end{prop}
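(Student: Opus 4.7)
The plan is to reduce the problem to a finite-dimensional module classification. First, I would apply the Wedderburn decomposition to write $A \cong \prod_{i} A_i$ as a product of simple factors, so that $A^\times = \prod_i A_i^\times$ and $\Cl_0(A^\times) = \prod_i \Cl_0(A_i^\times)$. This reduces the task to the case of a single simple factor $A_i \cong \Mat_n(D)$ for $D$ a division algebra central over a finite extension $F'/F$; replacing $F$ by $F'$, we may assume $A = \Mat_n(D)$ is central simple over a number field $F$.

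Next, I would bound the orders of torsion elements in $\GL_n(D)$. Since $\fchar F = 0$, any $g \in \GL_n(D)$ of finite order $N$ is semisimple, and the $F$-subalgebra $F[g]$ is a commutative semisimple quotient of $F[X]/(X^N-1)$; hence it is a product of fields, each isomorphic to $F(\zeta_d)$ for some divisor $d \mid N$. The constraint $[F[g]:F] \leq n^2 \dim_F D$, combined with $[F(\zeta_d):F] \geq \phi(d)/[F:\Q]$ and the growth $\phi(d) \to \infty$, forces only finitely many values of $d$ to occur, and hence bounds $N$ by some constant $N_0$ depending only on $A$.

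Setting $M = \lcm(1,2,\ldots,N_0)$, every torsion element of $\GL_n(D)$ satisfies $g^M = 1$. Such a $g$ endows $D^n$ with a module structure over the finite-dimensional semisimple $F$-algebra $R := D^{\op} \otimes_F F[X]/(X^M-1)$, and a direct check shows that two torsion elements $g_1, g_2$ are $\GL_n(D)$-conjugate if and only if the corresponding $R$-module structures on $D^n$ are isomorphic. Thus $\Cl_0(\GL_n(D))$ injects into the set of isomorphism classes of $R$-modules of $F$-dimension $n\dim_F D$. Since $R$ is semisimple over a characteristic-zero field, it has only finitely many simple modules up to isomorphism, and any $R$-module of a fixed finite $F$-dimension is determined by the bounded non-negative integer multiplicities of these simples; hence there are only finitely many such classes. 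Combining these reductions yields the finiteness of $\Cl_0(A^\times)$.

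The main obstacle I expect is the order bound in the second step, since one must handle the possibly subtle factorization of cyclotomic polynomials over a general number field $F$; however, this ultimately reduces to the elementary estimate $\phi(d)/[F:\Q] \to \infty$. The remaining steps are essentially formal bookkeeping within module theory over semisimple algebras.
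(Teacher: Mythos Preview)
Your proof is correct. Both your argument and the paper's share the cyclotomic degree bound showing that only finitely many orders can occur, but they diverge in how finiteness for a fixed order is established. The paper works directly with the general semisimple algebra $A$ and invokes an extended Noether--Skolem theorem (due to Pop--Pop and to Yu) to conclude that $\Hom_F(F[t]/(t^n-1),A)/A^\times$ is finite for each $n$. You instead first perform a Wedderburn reduction to $A=\Mat_n(D)$ and then reinterpret conjugacy classes as isomorphism classes of modules over the semisimple algebra $R=D^{\op}\otimes_F F[X]/(X^M-1)$, where finiteness is immediate from the finiteness of simple $R$-modules up to isomorphism. In effect, your module-theoretic step is a self-contained proof of the special case of the extended Noether--Skolem result needed here; the paper's route is shorter but relies on a nontrivial external reference, while yours is longer but entirely elementary.
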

\begin{proof}
For each positive integer $n$, denote by $\Hom_{F}(F[t]/(t^n-1),A)$
the set of $F$-algebra homomorphisms from $F[t]/(t^n-1)$ to $A$, and
by $\Hom^*_{F}(F[t]/(t^n-1),A)$ the subset consisting of all maps
$\varphi$ 
with $\ord (\varphi(t))=n$. The group $A^\times$ acts on
$\Hom_{F}(F[t]/(t^n-1),A)$ by conjugation, and we have orbit spaces  
$$\Hom^*_{F}(F[t]/(t^n-1),A)/A^\times\subset
\Hom_{F}(F[t]/(t^n-1),A)/A^\times.$$  
Let $\Cl_0(n,A^\times)$ denote the set of conjugacy classes of
elements of order $n$ in $A^\times$. Clearly this set agrees with the
set $\Hom^*_{F}(F[t]/(t^n-1),A)/A^\times$. 

Since $A$ is separable over $F$ and $F[t]/(t^n-1)$ is semisimple, 
an extension
of the Noether-Skolem theorem (\cite[Theorem 2]{pop-pop-1985}, also see
\cite[Theorem 1.4]{yu:embed}) states 
the set $\Hom_{F}(F[t]/(t^n-1),A)/A^\times$ is finite. Thus,  
$\Cl_0(n,A^\times)$ is finite for each $n$. Note that 
$\Cl_0(A^\times )$ is the union of $\Cl_0(n,A^\times)$ for all $n$, 
and that  $\Cl_0(n,A^\times)$ is empty for almost all $n$, because the
degree of $\Q(\zeta_n)$ is unbounded when $n$ goes large, 
This proves the finiteness of  $\Cl_0(A^\times )$.
\end{proof}

Now we provide an example showing that $\Cl_0 (G(F))$ can be infinite
for a connected reductive group $G$ over a number field $F$. Take
$G=\SL_2$ and $F=\Q$. Consider the subset $\Cl_0(4, \SL_2(\Q))\subset
\Cl_0(\SL_2(\Q))$ of classes of order $4$. We choose a base point
$\xi_0=\begin{pmatrix} 0 & -1 \\ 1 & 0 \end{pmatrix}$ and set
$K:=\Q(\xi_0)$, which is isomorphic to $\Q(\sqrt{-1})$. Every
element $\xi\in \SL_2(\Q)$ of order $4$ is conjugate to $\xi_0$ by an
element $g_1$ in $\GL_2(\Q)$, i.e. $\xi=g_1 \xi_0 g_1^{-1}$. Two
elements $g_1$ and $g_2$ in $\GL_2(\Q)$ give rise to the same element
$\xi$ if and only if $g_2=g_1 z$ for some element $z\in
K^\times$. Moreover, suppose $\xi_1$ and $\xi_2$ are two elements in
$\SL_2(\Q)$ of order $4$ presented by $g_1$ and $g_2$,
respectively. Then $\xi_1$ and $\xi_2$ are conjugate in $\SL_2(\Q)$ if
and only if $g_2=h g_1 z$ for some elements $h\in \SL_2(\Q)$ and $z\in
K^\times$. Therefore, we have proved a bijection 
\begin{equation}\label{eq:SL2}
\Cl_0(4,\SL_2(\Q))\simeq \SL_2(\Q)\backslash \GL_2(\Q)/K^\times. 
\end{equation}

Taking the determinant, we have $\Cl_0(4,\SL_2(\Q))\simeq \Q^\times/
\Nm_{K/\Q}(K^\times)$. Note that $\Nm_{K/\Q}(K^\times)$ consists of
all non-zero elements of the form $a^2+b^2$ with $a, b\in \Q$. 
By basic number theory, we obtain the following result. 

\begin{prop}
The set $\Cl_0(4,\SL_2(\Q))$ is in bijection with the $\F_2$-vector
space generated by $-1$ and prime elements $p$ with $p\equiv 3 \pmod
4$. In particular, the set $\Cl_0(4,\SL_2(\Q))$ is infinite.  
\end{prop}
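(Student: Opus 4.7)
The plan is to take the bijection $\Cl_0(4,\SL_2(\Q))\simeq \Q^\times/\Nm_{K/\Q}(K^\times)$ already established in the text and identify the quotient group on the right with the claimed $\F_2$-vector space using classical number theory about sums of two squares.

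First, I would make precise the subgroup $N:=\Nm_{K/\Q}(K^\times)\subset \Q^\times$. For $K=\Q(\sqrt{-1})$, the norm of $a+b\sqrt{-1}\in K^\times$ equals $a^2+b^2$, so $N$ is exactly the set of positive rationals of the form $a^2+b^2$ with $(a,b)\in \Q^2\setminus\{(0,0)\}$. After clearing denominators, $r>0$ lies in $N$ if and only if some integer multiple $c^2 r$ (equivalently, $r$ written as a reduced fraction with numerator and denominator of this shape) is a positive integer which is a sum of two integer squares. This identifies $N$ with the subgroup of $\Q^\times$ consisting of positive rationals that are sums of two rational squares.

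Next I would quote the classical two-squares theorem (Fermat–Euler), together with the Brahmagupta–Fibonacci multiplicativity identity, to characterize $N$ explicitly: a nonzero rational $r$ lies in $N$ if and only if $r>0$ and for every rational prime $p\equiv 3\pmod 4$, the $p$-adic valuation $v_p(r)$ is even. The containment $\supseteq$ follows because $2=1^2+1^2$, every prime $p\equiv 1\pmod 4$ is a sum of two squares, and $p^2=p^2+0^2$ for $p\equiv 3\pmod 4$, combined with multiplicativity of $N$ and the fact that $N$ is closed under division within $\Q^\times$ (a quotient of two sums of two squares is a sum of two squares, since $N$ is a subgroup). The containment $\subseteq$ follows because $-1$ is not a sum of two rational squares (by considering $\Q_2$ or simply positivity) and any prime $p\equiv 3\pmod 4$ is not (by reducing mod~$4$ or by the local obstruction at $p$).

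From this description, the quotient $\Q^\times/N$ is generated by the images of $-1$ and of all primes $p\equiv 3\pmod 4$, while $2$ and the primes $p\equiv 1\pmod 4$ die in the quotient. Each generator has order $2$ (since squares are norms: $(-1)^2=1\in N$ and $p^2\in N$), so $\Q^\times/N$ is an $\F_2$-vector space. Independence of the generators is immediate from unique factorization together with the characterization of $N$: any non-trivial finite product $(-1)^\varepsilon \prod_i p_i$ of these generators has either wrong sign or an odd valuation at some $p_i\equiv 3\pmod 4$, hence is not in $N$. Combining with \eqref{eq:SL2} and passing through $\det$, this yields the claimed bijection and the infinitude of $\Cl_0(4,\SL_2(\Q))$. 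The only mildly subtle step is the rational-coefficient version of the two-squares characterization, but it reduces immediately to the classical integral statement by scaling, so no serious obstacle is expected.
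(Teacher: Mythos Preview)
Your proposal is correct and follows exactly the route the paper takes: the paper has already reduced the question to computing $\Q^\times/\Nm_{K/\Q}(K^\times)$ with $K=\Q(\sqrt{-1})$, and then simply invokes ``basic number theory'' for the identification with the $\F_2$-space on $-1$ and the primes $p\equiv 3\pmod 4$. You are spelling out precisely that basic number theory (the two-squares theorem plus positivity and the local/valuation obstruction at primes $p\equiv 3\pmod 4$), so there is nothing to add.
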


\begin{rem}
Another way to interpret the previous example is through the point of
view of \textit{stable conjugacy classes}.  Let $G$ be a connected
reductive group over $F$ as before. Two elements $\xi_1, \xi_2\in
G(F)$ are said to be \emph{stably conjugate} if there exists $g\in
G(\overline{F})$ such that $\xi_1=g\xi g^{-1}$.  Let $G_\xi$ be the
centralizer of $\xi \in G(F)$.  Langlands
\cite{Langlands-stable-conj-1979} establishes a bijection between the
set of conjugacy classes within the stable conjugacy class of $\xi$
and $\ker(H^1(F, G_\xi)\to H^1(F, G))$.  In the example where
$G=\SL_2$ and $F=\Q$, every element of order $4$ in $\SL_2(\Q)$ is
stably conjugate to $\xi_0$.  Since $H^1(\Q, \SL_2)=\{1\}$ and
$G_{\xi_0}$ coincides with the norm 1 torus
$T:=\ker\big(\Res_{K/\Q}(\G_{m, K})\xrightarrow{\Nm_{K/\Q}}
\G_{m,\Q}\big)$, we recover the result  
\[\Cl_0(4, \SL_2(\Q))\simeq H^1(\Q,
T)=\Q^\times/\Nm_{K/\Q}(K^\times). \] 
\end{rem}


We mention Springer and Steinberg \cite{springer-steinberg-1970} and
Humphreys \cite{Humphreys:1995} as 
important references for  
conjugacy classes of linear algebraic groups.

\section{The Cardinality of  $\Cl_0 (\GL_2(\calO))$}
\label{sec:stat-main-results}

Let $D$ be a finite-dimensional central division $\Q$-algebra, and
$\calO$ a maximal order in $D$. Fix an integer $d>1$.  We explain the
strategy for calculating the cardinality of $\Cl_0 (\GL_d(\calO))$,
based on the lattice description of conjugacy classes in
 \cite[Section~6.4]{xue-yang-yu:sp_as}.  
As remarked right after Proposition~\ref{1.1}, $\abs{\Cl_0 (\GL_d(\calO))}$ depends only on $d$ and $D$, not on the
choice of the maximal order $\calO$. So it makes sense to set
$H(d, D):=\abs{\Cl_0 (\GL_d(\calO))}$.  The strategy is carried out in
detail for the case $d=2$ and $D=D_{p,\infty}$ in subsequent sections
under a mild condition on $p$ (see Remark~\ref{rem:assumption}), and the resulting formula for
$H(2, D_{p,\infty})$ is stated in Theorem~\ref{card.1}. As a convention, $\bbN$ denotes the set of strictly positive integers, and $\Z_{\geq 0}$ the set of nonnegative ones. 

\subsection{The general strategy}\label{sec:3.1} 
Given an element $x\in \GL_d(\calO)$
of finite order, its minimal polynomial over $\Q$ is of the form
\begin{equation}
  P_\uln(T)=\Phi_{n_1}(T)  \cdots \Phi_{n_r}(T), \quad
  1\le n_1<\cdots <n_r
\end{equation}
for some $r$-tuple $\uln=(n_1, \ldots, n_r)\in \bbN^r$, where
$\Phi_n(T)\in \Z[T]$ denotes the $n$-th cyclotomic polynomial.  For
simplicity, we denote the set of strictly increasing $r$-tuples of positive
integers by $\brN^r$.  Let
$C(\uln)\subseteq \Cl_0(\GL_d(\calO))$ be the subset of conjugacy
classes with minimal polynomial $P_\uln(T)$.  The subring
$\Z[x]\subset \Mat_d(\calO)$ (resp.~subalgebra
$\Q[x]\subset \Mat_d(D)$) generated by $x$ is isomorphic to $A_\uln$
(resp.~$K_\uln$) defined as follows
\begin{equation}
  \label{eq:3}
 A_\uln:=\frac{\Z[T]}{\prod_{i=1}^r \Phi_{n_i}(T)}, \qquad
K_\uln:=\frac{\Q[T]}{\prod_{i=1}^r \Phi_{n_i}(T)}\cong \prod_{i=1}^r \Q[T]/(\Phi_{n_i}(T)). 
\end{equation}
If $r=1$, we omit the underline in $\uln$ and write $A_n$ and $K_n$
instead. Hence $K_\uln\cong \prod_{i=1}^r K_{n_i}$, but this decomposition
does \textit{not} hold for $A_\uln$ in general. Let $\calO^\opp$
(resp.~$D^\opp$) be
the opposite ring of $\calO$ (resp. $D$). We define
\begin{equation}
  \label{eq:12}
\scrA_\uln:=A_\uln\otimes_\Z\calO^\opp, \quad \text{and} \quad
\scrK_\uln:=K_\uln\otimes_\Q D^\opp.
\end{equation}
Clearly, $\scrA_\uln$ is an order in the semisimple $\Q$-algebra
$\scrK_\uln\cong \prod_{i=1}^r\scrK_{n_i}$.  Each $\scrK_n$ is a
central simple $K_n$-algebra, whose left simple module is denoted by
$W_n$. The dimension $e(n)$ of $W_n$ as a left $D^\opp$-space (or
equivalently, a right $D$-space) is also the smallest $e\in \bbN$ such
that there exists an embedding $K_n\hookrightarrow \Mat_e(D)$.


Let $V=D^d$ be the right $D$-space of column vectors, and
$M_0=\calO^d\subset V$ the standard $\calO$-lattice in $V$.  Then
$\End_\calO(M_0)=\Mat_d(\calO)$, acting on $M_0$ from the left by
multiplication.  The conjugacy class $[x]\in C(\uln)$ equips
$M_0$ with a faithful $(A_\uln, \calO)$-bimodule structure, or
equivalently, a faithful left $\scrA_\uln$-module structure.
Similarly, $V$ is equipped with a faithful left $\scrK_\uln$-module
structure.  The decomposition of $K_\uln$ in (\ref{eq:3}) induces a decomposition
\begin{equation}
  \label{eq:10}
V=\oplus_{i=1}^r V_{n_i}, 
\end{equation}
where each $V_{n_i}$ is a \textit{nonzero} $\scrK_{n_i}$-module. Hence
$V_{n_i}\simeq (W_{n_i})^{m_i}$ for some $m_i\in \bbN$. Comparing the
$D$-dimensions, we get
\begin{equation}
  \label{eq:4}
  d=m_1e(n_1)+\cdots +m_re(n_r). 
\end{equation}
The $r$-tuple $\ulm=(m_1, \ldots, m_r)\in \bbN^r$ will be called the
\textit{type} of the left $\scrK_\uln$-module $V$, and the pair
$(\uln, \ulm)$ the \textit{type} of the conjugacy class
$[x]\in C(\uln)\subseteq \Cl_0(\GL_d(\calO))$.

A pair of $r$-tuples $(\uln, \ulm)\in \brN^r\times \bbN^r$
is said to be $d$-\textit{admissible} if it
satisfies equation (\ref{eq:4}).  Let
$C(\uln, \ulm)\subseteq \Cl_0(\GL_d(\calO))$ be the subset of
conjugacy classes of type $(\uln, \ulm)$. Then we have
\begin{equation}
  \label{eq:5}
  \Cl_0(\GL_d(\calO))=\coprod_\uln C(\uln)=\coprod_{(\uln,
    \ulm)}C(\uln, \ulm), 
\end{equation}
where $(\uln, \ulm)$ runs over all $d$-admissible pairs.  The same
proof of \cite[Theorem~6.11]{xue-yang-yu:sp_as} establishes a
bijection between $C(\uln, \ulm)$ and the set $\scrL(\uln, \ulm)$ of
isomorphism classes of $\scrA_\uln$-lattices in the left
$\scrK_\uln$-module $V$ of type $\ulm$. The latter set is finite
according to the Jordan-Zassenhaus Theorem \cite[Theorem 24.1,
p.~534]{curtis-reiner:1}.
Put $o(\uln):=\abs{C(\uln)}$
and $o(\uln, \ulm):=\abs{C(\uln, \ulm)}=\abs{\scrL(\uln, \ulm)}$. It follows from (\ref{eq:5})
that
\begin{equation}
  \label{eq:9}
  H(d, D)=\abs{\Cl_0(\GL_d(\calO))}=\sum_{\uln} o(\uln)=\sum_{(\uln,
    \ulm)}o(\uln, \ulm). 
\end{equation}

Now fix a $d$-admissible pair $(\uln, \ulm)\in \brN^r\times \bbN^r$
and a left $\scrK_\uln$-module $V$ of type $\ulm$. The isomorphism
class of an $\scrA_\uln$-lattice $\Lambda\subset V$ is denoted by
$[\Lambda]$.  Two $\scrA_\uln$-lattices $\Lambda_1, \Lambda_2\subset V$
are isomorphic if and only if there exists
$g\in \End_{\scrK_\uln}(V)^\times$ such that $\Lambda_1=\Lambda_2g$
(In particular,  $\End_{\scrK_\uln}(V)$ acts on $V$ from the
right). Clearly, 
\begin{equation}
  \label{eq:11}
\End_{\scrK_\uln}(V)=\oplus_{i=1}^r \End_{\scrK_{n_i}}(V_{n_i}),
\ \text{and} \ \End_{\scrK_{n_i}}(V_{n_i})\sim
\scrK_{n_i}=K_{n_i}\otimes_\Q D^\opp,
\end{equation}
where $\sim$ denotes the Brauer equivalence of central simple
algebras. On the other hand,
$\End_{\scrK_{n_i}}(V_{n_i})^\opp$ is canonically isomorphic to the
centralizer of $K_{n_i}$ in $\End_D(V_{n_i})$. So by the Centralizer
Theorem \cite[Theorem 3.15]{Farb-Dennis-1993},
\begin{gather}
  [K_{n_i}:\Q]^2[\End_{\scrK_{n_i}}(V_{n_i}):K_{n_i}]=[\End_D(V_{n_i}):\Q]=[D:\Q](m_ie(n_i))^2.   \label{eq:13}
\end{gather}
The structure of $\End_{\scrK_\uln}(V)$ is completely determined  by
(\ref{eq:11}) and (\ref{eq:13}). 

For each prime $\ell\in \bbN$, let $\Lambda_\ell$ be the $\ell$-adic
completion of $\Lambda$, and $\scrL_\ell(\uln, \ulm)$ the set of
isomorphism classes of $\scrA_{\uln,\ell}$-lattices in $V_\ell$.  The profinite
completion $\Lambda\mapsto \wh \Lambda=\prod_\ell \Lambda_\ell$
induces a surjective map
\begin{equation}
  \label{eq:14}
\Psi: \scrL(\uln, \ulm)\to \prod_\ell \scrL_\ell(\uln, \ulm).  
\end{equation}
 For almost all
primes $\ell$, the order $\scrA_{\uln, \ell}$ is maximal in
$\scrK_{\uln, \ell}$, in which case $\scrL_\ell(\uln, \ulm)$ is a
singleton by \cite[Theorem~26.24]{curtis-reiner:1}. So the right hand
side of (\ref{eq:14}) is essentially a finite product. Two lattices
$\Lambda_1$ and $\Lambda_2$ are said to be in the same \textit{genus}
if $\Psi([\Lambda_1])=\Psi([\Lambda_2])$, that is, if they are locally
isomorphic at every prime $\ell$.  The fibers of $\Psi$ partition
$\scrL(\uln, \ulm)$ into a disjoint union of genera.  More explicitly,
for each element
$\bbL=(\bbL_\ell)_\ell\in \prod_\ell \scrL_\ell(\uln, \ulm)$, let
\begin{equation}
  \label{eq:15}
\scrL(\uln, \ulm, \bbL):=\Psi^{-1}(\bbL)=\{[\Lambda]\in
\scrL(\uln,\ulm) \mid  [\Lambda_\ell]= 
\bbL_\ell, \forall \ell\,\}. 
\end{equation}
Then $\scrL(\uln, \ulm)=\coprod_\bbL \scrL(\uln, \ulm, \bbL)$, where
$\bbL$ runs over elements of $\prod_\ell \scrL_\ell(\uln, \ulm)$. 

Lastly, we pick an $\scrA_\uln$-lattice $\Lambda\subset V$ with
$[\Lambda]\in \scrL(\uln, \ulm, \bbL)$ and write $O_\Lambda$ for its
endomorphism ring
$\End_{\scrA_\uln}(\Lambda)\subset \End_{\scrK_\uln}(V)$.  It follows
from \cite[Proposition~1.4]{Swan-1988} that $\scrL(\uln, \ulm, \bbL)$
is bijective to the set of locally principal right ideal classes of $O_\Lambda$. In
particular,
\begin{equation}
  \label{eq:16}
  \abs{\scrL(\uln, \ulm, \bbL)}=h(O_{\Lambda}). 
\end{equation}
Another choice  $\Lambda'$ with
$[\Lambda']\in \scrL(\uln, \ulm, \bbL)$ produces an endomorphism ring
$O_{\Lambda'}$ locally conjugate to $O_\Lambda$  at every
prime $\ell$, and hence gives rise to the same class number $h(O_{\Lambda'})=h(O_\Lambda)$. If
$\scrA_\uln$ is maximal at $\ell$, then $(O_\Lambda)_\ell$ is a
maximal order in $\End_{\scrK_\uln}(V)_\ell=\End_{\scrK_{\uln,\ell}}(V_\ell)$

In summary, the calculation of $H(d,D)$ is separated into $3$ steps: 
\begin{enumerate}
\item For each $1\leq r\leq d$, list the set $\calT(d,r)$ of all
  $d$-admissible pairs $(\uln, \ulm)\in \brN^r\times \bbN^r$. We set
  $\calT(r)=\calT(d,r)$ if $d$ is clear from the context.
\item For each $(\uln, \ulm)$, classify the genera of
  $\scrA_{\uln}$-lattices in the left $\scrK_\uln$-module $V$ of type
  $\ulm$. This amounts to classifying the isomorphism classes of
  $\scrA_{\uln, \ell}$-lattices in $V_\ell$.  Only the primes $\ell$
  with $\scrA_{\uln, \ell}$  non-maximal come in to play.
\item For each genus, write down (at least locally) the endomorphism ring of an
  lattice member and calculate its class number. The sum
  of all these class numbers is $H(d,D)$.
\end{enumerate}

 \begin{rem}\label{rem:simplifications}
We make a couple of simplifications for the calculations.

\noindent(i) The center $Z(\GL_d(\calO))=\{\pm 1\}$ acts on
$\Cl_0(\GL_d(\calO))$ by multiplication, and induces a bijection
between $C(\uln)$ and $C(\uln^\dagger)$, where $\uln^\dagger$ is obtained by first
defining an intermediate $r$-tuple $\uln^\ddagger:=(n_1^\ddagger, \ldots, n_r^\ddagger)$
with
\begin{equation}
  \label{eq:17}
 n_i^\ddagger=
\begin{cases}
  2n_i \qquad &\text{if } 2\nmid n_i,\\
  n_i  \qquad &\text{if } 4\mid n_i,\\
  n_i/2 \qquad &\text{otherwise},
\end{cases}
\end{equation}
for each $1\leq i\leq r$, and then rearranging its entries in ascending order. For example, if
$\uln=(3,4)$, then $\uln^\dagger=(4, 6)$. Thus $o(\uln)=o(\uln^\dagger)$ and only
one of them needs to be calculated.

\noindent(ii) Let $u$ be the reduced degree of $D$ over $\Q$, and
$\Lambda$ an $\scrA_\uln$-lattice in the $\scrK_\uln$-module $V$ of
type $\ulm$.  For almost all primes $\ell$, we have
$\calO^\opp\otimes \Z_\ell\simeq \Mat_u(\Z_\ell)$, and hence
$\scrA_{\uln,\ell}\simeq \Mat_u(A_{\uln,\ell})$. Fix such an $\ell$. It then follows from Morita equivalence that
$\Lambda_\ell\simeq (\Lambda'_\ell)^u$ and $V_\ell\simeq (V'_\ell)^u$,
where $\Lambda'_\ell$ is an $A_{\uln, \ell}$-lattice in the
$K_{\uln, \ell}$-module $V'_\ell=\prod_{i=1}^r V'_{n_i, \ell}$. Each
$V'_{n_i, \ell}$ is a free $K_{n_i,\ell}$-module of rank
\begin{equation}
  \label{eq:18}
\dim_\Q(D^{m_ie(n_i)})/(u[K_{n_i}:\Q])=um_ie(n_i)/\varphi(n_i).
\end{equation}
The association $\Lambda_\ell\mapsto \Lambda_\ell'$ establishes a
one-to-one correspondence between $\scrL_\ell(\uln, \ulm)$ and the set
of isomorphism classes of $A_{\uln, \ell}$-lattice in
$V'_\ell$. Moreover,
$\End_{\scrA_{\uln, \ell}}(\Lambda_\ell)\cong \End_{A_{\uln,
    \ell}}(\Lambda'_\ell)$.
This reduces the classification of lattices over the
\emph{non-commutative} order $\scrA_{\uln, \ell}$ to that over the
\emph{commutative} order $A_{\uln, \ell}$, which is much easier.  We
make use of this simplification in Section~\ref{sec:non-elem-case}; see Table~\ref{tab:V-ell}. 
 \end{rem}

 \begin{rem}\label{rem:geom-meaning}
   When $D=D_{p, \infty}$, many numerical invariants discussed in this
   subsection admit natural geometric meanings. Assume that $\F_q$ has
   even degree $a:=[\F_q:\F_p]$ over its prime field as in
   Proposition~\ref{1.1}.  Recall that an algebraic integer $\pi\in \overline{Q}$ is
   said to be a \emph{Weil $q$-number} if $\abs{\iota(\pi)}=q^{1/2}$
   for every embedding $\iota: \Q(\pi)\hookrightarrow \C$. One class
   of examples is given by $\pi_n:=(-p)^{a/2}\zeta_n$ for
   $n\in \bbN$, where $\zeta_n$ denotes a primitive $n$-th root of
   unity. By the Honda-Tate theorem, there is a unique simple abelian
   variety $X_n$ over $\F_q$ up to isogeny corresponding to the
   $\Gal(\overline{\Q}/\Q)$-conjugacy class of $\pi_n$. It is known
   \cite[Subsections~6.2]{xue-yang-yu:sp_as} that $X_n$ is a
   supersingular abelian variety of dimension $e(n)$. Fix an integer
   $d>1$ and let $(\ul n, \ul m)\in \brN^r\times \bbN^r$ be a
   $d$-admissible pair.  By \cite[Theorem~6.11]{xue-yang-yu:sp_as},
   $o(\ul n, \ul m)$ counts the number of $\F_q$-isomorphism classes
   of $d$-dimensional \emph{superspecial} abelian varieties over
   $\F_q$ in the isogeny class of
   $X_{\uln, \ulm}:=\prod_{i=1}^r (X_{n_i})^{m_i}$. In particular, $r$
   measures the number of $\F_q$-isotypic parts of $X_{\uln, \ulm}$.
   By \cite[Lemma~6.3]{xue-yang-yu:sp_as}, we have
   $\End^0(X_{\uln, \ulm})\simeq \End_{\scrK_\uln}(V)^\opp$, where $V$ is a
   left $\scrK_\uln$-module of type $\ulm$.  
 \end{rem}

\subsection{Explicit formulas for $H(2, D_{p, \infty})$}
\label{sec:card.2}
First, we list all $2$-admissible pairs
$(\uln, \ulm)\in \brN^r\times \bbN^r$ for $r=1,2$. Note that
$e(n)\leq 2$ only if $\varphi(n)=[K_n:\Q]\leq 4$, 
i.e. $n\in\{1,2,3,4,5,6,8,10,12\}$.  More explicitly, 
\begin{itemize}
\item if $n\in \{1,2\}$, then $K_n=\Q$ and $e(n)=1$;
\item if $n\in \{3, 4, 6\}$, then $[K_n:\Q]=2$. We have $e(n)=2$ if
  $p$ splits in $K_n$, and $e(n)=1$ otherwise. 
\item if $n\in \{5, 8, 10, 12\}$, then $[K_n:\Q]=4$ and $e(n)\geq
  2$. The equality holds if and only if $p$ does \textit{not} split
  completely in $K_n$. 
\end{itemize}

Thus we have 
\begin{gather*}
  \calT(1)=\{(n,m)\in \bbN\times \bbN\mid n\in\{1,2,3,4,5,6,8,10,12\}, me(n)=2\},\\
  \calT(2)=\{((n_1, n_2),(m_1, m_2))\in \brN^2\times \bbN^2\mid
 n_1<n_2,  n_i\in\{1,2,3,4,6\}, m_ie(n_i)=1\}.
\end{gather*}
For each $\uln\in \brN^r$ with $r=1, 2$, there is at most one
$\ulm\in \bbN^r$ such that $(\uln, \ulm)$ is $2$-admissible. So
we omit $\ulm$ from the notation $\scrL(\uln, \ulm)$ and write
$\scrL(\uln)$ instead. Similarly, we discuss only the value of
$o(\uln)$ rather than that of $o(\uln, \ulm)$. Thus there should be no ambiguity of the notation $o(1,2)$ for $\uln=(1,2)$.
By Remark~\ref{rem:simplifications}(i), we have
\begin{gather}
  o(1)=o(2)=1,\ o(3)=o(6), \ o(5)=o(10);\label{eq:43}\\
  o(1,3)=o(2,6), \  o(1,4)=o(2,4), \ o(1,6)=o(2,3), \
o(3,4)=o(4,6).\label{eq:45}
\end{gather}

Let $q=p^a$ be an \emph{even} power of $p$. By
Remark~\ref{rem:geom-meaning}, each $o(\uln)$ above
counts the number of isomorphism classes of superspecial abelian
surfaces over $\F_q$ in a supersingular isogeny class
determined by $\uln\in \brN^r$. If $r=1$ so that $\uln=n\in \bbN$,
then the isogeny class is isotypic; it is even simple if $e(n)=2$. If $r=2$, then the
isogeny class is non-isotypic: every member is $\F_q$-isogenous
to a product of two mutually non-isogenous supersingular elliptic
curves over $\F_q$.  



\begin{thm}\label{card.1}
  Let $D=D_{p,\infty}$ be the quaternion $\Q$-algebra ramified exactly
  at $p$ and $\infty$, and $\calO$ a maximal order in $D$. We have
\begin{equation}
  \label{eq:card.8}
  \begin{split}
H(2, D_{p,\infty})=&\abs{\Cl_0(\GL_2(\calO))}  =2+2o(3)+o(4)+2o(5)+o(8)+o(12) \\
          &+o(1,2)+2o(2,3)+2o(2,4)+2o(2,6)+2o(3,4)+o(3,6),
  \end{split}
\end{equation}
where the value of each $o(\uln)$ is as follows:
\begin{itemize}
\itemsep4pt
\item $o(3)=2-\left ( \frac{-3}{p} \right )$;

\item $o(4)=2-\left (\frac{-4}{p} \right )$;

\item $o(5)=
  \begin{cases}
   1 & \text{if } p=5;\\
   0 & \text{if } p\equiv 1 \phantom{,1} \pmod{5}; \\
   2 &  \text{if } p\equiv 2,3 \pmod{5}; \\
   4 &  \text{if } p\equiv 4 \phantom{,1} \pmod{5};
  \end{cases}$

\item  $o(8)=
  \begin{cases}
   1 & \text{if } p=2; \\
   0 & \text{if } p\equiv 1 \phantom{,1,1} \pmod{8}; \\
   4 & \text{if } p\equiv 3,5,7   \pmod{8}; 
  \end{cases}$

\item $o(12)=
  \begin{cases}
     3 & \text{if } p=2,3;\\
     0 & \text{if } p\equiv 1 \phantom{,1,11}  \pmod{12}; \\
     4 & \text{if } p\equiv 5,7,11 \pmod{12}; 
  \end{cases}$ 


\item $o(1,2)=
  \begin{cases}
3 \quad \text{if}\quad p=3;\\[5pt] 
    \begin{aligned}
\frac{(p-1)^2}{9} &+
  \frac{p+15}{18}\left(1-\left ( \frac{-3}{p} \right)\right) +
  \frac{p+2}{6}\left(1-\left ( \frac{-4}{p} \right)\right) \\
  &+  \frac{1}{6}\left(1-\left ( \frac{-3}{p} \right )\right)\left(1-\left
      ( \frac{-4}{p} \right )\right)\quad  \text{if } p\neq 3;\\
    \end{aligned}
  \end{cases}$

\item  $o(2,3) =\left(1-\left(\frac{-3}{p}\right)\right)\left(\frac{p-1}{12}+\frac{1}{3}\left(1-\left(\frac{-3}{p}\right)\right)+\frac{1}{4}\left(1-\left(\frac{-4}{p}\right)\right)\right)$;

\item  $o(2,4) =\left( \frac{p+3}{3}-\frac{1}{3}\left (
      \frac{-3}{p} \right)\right)\left(1-\left ( \frac{-4}{p}
    \right)\right)$;

\item $o(2,6)=\left(\frac{5p+18}{12}+\frac{1}{3}\left (
       \frac{-3}{p}\right)-\frac{1}{4}\left(\frac{-4}{p}\right)\right)\left(1-\left (
       \frac{-3}{p} \right)\right)$;


\item  $o(3,4) = \left(1-\left ( \frac{-3}{p}
    \right)\right)\left(1-\left ( \frac{-4}{p} \right)\right)$;

\item $o(3,6) = 2\left(1-\left ( \frac{-3}{p}
    \right)\right)^2$.
\end{itemize} 
\end{thm}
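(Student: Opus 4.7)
The plan is to execute the three-step strategy of Section~\ref{sec:3.1} in the case $d=2$, $D=D_{p,\infty}$. First, I would establish the master decomposition~(\ref{eq:card.8}) by enumerating the $2$-admissible pairs in $\calT(2,1)\cup \calT(2,2)$. The constraint $\sum_i m_i e(n_i) = 2$ combined with $\varphi(n_i) \leq 2 e(n_i) \leq 4$ restricts $n_i$ to $\{1,2,3,4,5,6,8,10,12\}$ when $r=1$, and to $\{1,2,3,4,6\}$ with $m_i = e(n_i) = 1$ when $r=2$. Applying the $\pm I$-symmetry of Remark~\ref{rem:simplifications}(i) yields the identifications in (\ref{eq:43}) and (\ref{eq:45}); a tally over a set of orbit representatives, weighted by orbit size, produces the right-hand side of~(\ref{eq:card.8}).

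Second, for each isotypic representative $n \in \{3,4,5,6,8,10,12\}$, I would compute $o(n)=\abs{\scrL(n)}$ by: (a) identifying $\scrK_n = K_n \otimes_\Q D^\opp$ and its simple left module $W_n$; (b) classifying the local genera $\scrL_\ell(n)$ at the primes where $\scrA_n$ fails to be maximal; (c) computing the class number $h(O_\Lambda)$ of the endomorphism order of a representative lattice in each genus, invoking Morita reduction (Remark~\ref{rem:simplifications}(ii)) to the commutative order $A_{n,\ell}$ together with Eichler's formula for quaternion orders and Gauss's formula for imaginary quadratic orders. The Kronecker symbols $\Lsymb{-3}{p}$ and $\Lsymb{-4}{p}$ and the residues of $p$ modulo $5$, $8$, $12$ appear because they govern the splitting of $p$ in $K_n$, which in turn dictates whether $e(n)=1$ or $2$ and controls the local structure of $\scrK_n$.

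Third, for each non-isotypic pair $(n_1, n_2) \in \{(1,2),(2,3),(2,4),(2,6),(3,4),(3,6)\}$, I would exploit the decomposition $\scrK_\uln \cong \scrK_{n_1} \times \scrK_{n_2}$ and view $\scrA_\uln$ as a sub-order inside a maximal order of $\scrK_\uln$, whose conductor is supported at primes dividing $\Res(\Phi_{n_1}, \Phi_{n_2})$. At each such prime, the local $\scrA_{\uln,\ell}$-lattices in $V_\ell = V_{n_1,\ell} \oplus V_{n_2,\ell}$ are classified by the gluing of the two simple factors modulo the conductor, and their endomorphism rings are sub-orders of controlled conductor. Summing $h(O_\Lambda)$ over all resulting genera, via Eichler's formula for quaternion orders and the classical formula for imaginary quadratic orders, yields the listed expressions.

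The main obstacle is $o(1,2)$, whose main term is quadratic in $p$. Here $K_\uln = \Q \times \Q$ but $A_{(1,2)} = \Z[T]/(T^2-1)$ is a suborder of conductor $2$ inside $\Z \times \Z$, so $\scrA_{(1,2)}$ is non-maximal at the prime $2$ (and again at $p$ when $p=2$). The $\scrA_{(1,2)}$-lattices in $V \cong D \oplus D$ form an intricate family parametrized by local gluing data at the non-maximal primes, and the endomorphism orders that arise are various sub-orders of $\calO^\opp \times \calO^\opp$. The main term $(p-1)^2/9$ emerges from summing Eichler-type class number formulas over all these orders against the backdrop of the quaternion class number $h(\calO) \sim (p-1)/12$; the correction terms involving $\Lsymb{-3}{p}$ and $\Lsymb{-4}{p}$ arise as counts of optimal embeddings of $\Z[\zeta_3]$ and $\Z[\zeta_4]$ into these sub-orders, while the exceptional prime $p=3$ demands separate treatment because the cyclotomic splittings degenerate there.
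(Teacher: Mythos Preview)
Your high-level plan matches the paper's three-step strategy, and your enumeration of admissible pairs together with the $\pm I$-symmetry is exactly how (\ref{eq:card.8}) is obtained. But two of your technical descriptions miss the actual mechanisms the paper relies on.

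First, in the isotypic cases $n\in\{3,4,5,8,12\}$ the paper does \emph{not} sum nontrivial class numbers. It shows that every endomorphism ring $O_\Lambda$ has $h(O_\Lambda)=1$: for $n\in\{5,8,12\}$ one has $O_\Lambda=A_n$ with class number one; for $n\in\{3,4\}$ the ring $O_\Lambda$ is an Eichler order in the quaternion algebra $K_n\otimes D$ over the imaginary quadratic field $K_n$, hence $h(O_\Lambda)=h(A_n)=1$ by the Eichler condition. Thus $o(n)=\abs{\scrL_p(n)}$ is a purely local count at $p$, and your invocation of ``Gauss's formula for imaginary quadratic orders'' is beside the point.

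Second, and more seriously, your treatment of $o(1,2)$ underestimates the difficulty. The local classification at $2$ (via the Borevich--Faddeev structure theorem for lattices over the Bass order $A_{(1,2),2}$, not generic ``gluing data'') yields exactly three genera, with endomorphism rings $\bbO_1=\calO\times\calO$, $\bbO_8$, and $\bbO_{16}$ as in (\ref{o12_def}). The first two satisfy $h(\bbO_1)=h(\calO)^2$ and $h(\bbO_8)=h(\calO^{(2)})^2$, which are indeed Eichler-type. But $\bbO_{16}$, characterized by $\bbO_{16}/2\bbO_1\cong\Mat_2(\F_2)$ sitting diagonally in $\Mat_2(\F_2)^2$, is not an Eichler order, and $\wh\bbO_{16}^\times$ is not normal in $\wh\bbO_1^\times$, so no normal-subgroup or Eichler-mass argument applies. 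The paper computes $h(\bbO_{16})$ by fibering $\Cl(\bbO_{16})\to\Cl(\bbO_1)$ and identifying each fiber with a double coset space $C_{j_1}\backslash S_3/C_{j_2}$ where $C_{2j_i}\simeq$ the unit group of the $i$-th factor of $O_l(J)$; summing the table of $c_{j_1,j_2}$ against the refined counts $h_j(\calO)$ of (\ref{eq:hn}) produces the $(p-1)^2/24$ term. This $S_3$-combinatorial step is the crux and is absent from your outline; ``Eichler-type formulas'' and ``optimal embedding counts'' alone will not deliver $h(\bbO_{16})$.

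Finally, note that the paper proves the listed formulas only under the standing assumption that $A_{\uln,p}$ is \'etale over $\Z_p$ (Remark~\ref{rem:assumption}), deferring the ramified small primes to a sequel; your proposal does not flag this restriction.
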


\begin{cor}
Keeping the notation of Theorem~\ref{card.1}, we have 
  \begin{equation}
    \label{eq:cor}
    \lim_{p\to \infty} \frac{H(2,D_{p,\infty})}{p^2/9}=1.
  \end{equation}
\end{cor}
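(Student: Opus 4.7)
The plan is to isolate the dominant contribution to $H(2, D_{p,\infty})$ as $p \to \infty$ and show that all remaining terms are of lower order. Scanning the explicit formula \eqref{eq:card.8}, one sees at once that the terms $o(3), o(4), o(5), o(8), o(12), o(3,4), o(3,6)$ are each uniformly bounded in $p$, since they are explicit integer combinations of the Legendre symbols $\left(\tfrac{-3}{p}\right)$ and $\left(\tfrac{-4}{p}\right)$, both of which take values in $\{-1, 0, 1\}$.

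Next I would bound the middle-sized terms $o(2,3), o(2,4), o(2,6)$. Each of these is a product of a factor of the form $1 - \left(\tfrac{\pm3,\pm4}{p}\right)$ (which lies in $\{0, 1, 2\}$) with a quantity that is linear in $p$ plus bounded corrections; hence each of $o(2,3), o(2,4), o(2,6)$ is $O(p)$ as $p \to \infty$. Thus all terms in \eqref{eq:card.8} other than $o(1,2)$ contribute $O(p)$ in total.

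The remaining analysis is to identify the $p^2$-term inside $o(1,2)$. For $p \neq 3$, the formula
\[
o(1,2)=\frac{(p-1)^2}{9} + \frac{p+15}{18}\!\left(1-\left(\tfrac{-3}{p}\right)\right) + \frac{p+2}{6}\!\left(1-\left(\tfrac{-4}{p}\right)\right) + \frac{1}{6}\!\left(1-\left(\tfrac{-3}{p}\right)\right)\!\left(1-\left(\tfrac{-4}{p}\right)\right)
\]
has as its only term of order $p^2$ the piece $(p-1)^2/9$; the remaining three summands are $O(p)$ because the Legendre-symbol factors are bounded. The case $p = 3$ is irrelevant for the limit. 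Combining this with the bounds from the previous two steps yields
\[
H(2, D_{p,\infty}) = \frac{(p-1)^2}{9} + O(p) = \frac{p^2}{9} + O(p),
\]
and dividing through by $p^2/9$ and letting $p \to \infty$ gives the claimed limit $1$.

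There is essentially no obstacle here: once Theorem~\ref{card.1} is in hand, this is just an asymptotic bookkeeping argument. The only point worth flagging is to verify uniformly that every factor of the form $\bigl(1 - \left(\tfrac{\cdot}{p}\right)\bigr)$ is bounded above by $2$, so that terms like $\frac{p+15}{18}\left(1-\left(\tfrac{-3}{p}\right)\right)$ are genuinely $O(p)$ rather than hiding a $p$-dependent multiplier. This is immediate from the definition of the Legendre symbol.
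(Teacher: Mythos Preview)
Your argument is correct and follows essentially the same approach as the paper's own proof: both observe that, by Theorem~\ref{card.1}, the dominant term of $H(2,D_{p,\infty})$ is $o(1,2)$, which is asymptotic to $(p-1)^2/9$ as $p\to\infty$. Your version simply spells out the $O(p)$ bounds on the remaining terms more explicitly than the paper does.
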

\begin{proof}
By Theorem~\ref{card.1},  the dominant term of $H(2,D_{p,\infty})$ is $o(1,2)$, which
  is asymptotic to $(p-1)^2/9$ as $p$ tends to infinity.
\end{proof}









\begin{rem}
  Karemaker and Pries \cite[Proposition 7.2]{karemaker-pries-2017}
  give a full classification of the types of principally polarized
  simple supersingular abelian surfaces $(A,\lambda)$ over a finite
  field $\Fq$ with $\Aut_{\Fqbar}(A,\lambda)=\Z/2\Z$. They also prove
  \cite[Proposition 7.6]{karemaker-pries-2017} that if $p\ge 3$, then
  the proportion of $\F_{p^t}$-rational points of the supersingular locus
  $\calA_{2,\mathrm{ss}}$ which represent $(A,\lambda)$ with
  $\Aut_{\Fpbar}(A,\lambda)\neq \Z/2\Z$ tends to zero as
  $t\to \infty$.  They ask whether or not the majority of principally
  polarized supersingular abelian surfaces over $\F_{p^t}$ are those
  with normalized Weil numbers $(1,1,-1,-1)$. From Theorem~\ref{card.1}
  and \cite[Theorems 1.1 and 1.2]{xue-yang-yu:sp_as} we see that the
  proportion of {\it superspecial} abelian surfaces over $\F_{p^t}$ with
  normalized Weil numbers $(1,1,-1,-1)$ (with $t$ fixed) tends to one
  as $p\to \infty$. However, to deduce a similar result for
  supersingular abelian surfaces, one could use the argument of
  \cite{xue-yu:counting} where we compute the size of the
  isogeny class corresponding to the Weil number $\sqrt{p^t}$ with odd
  $t$.
\end{rem}

In the calculations for Theorem~\ref{card.1}, we make frequent use of
\emph{Eichler orders}, so let us briefly recall the definition and basic
properties.  Let $R$ be a Dedekind domain with fractional field $F$,
and $D$ be a quaternion $F$-algebra (not necessarily division). An
$R$-order $O$ in $D$ is called an \emph{Eichler order} if it can be
written as the intersection of two maximal orders; see
\cite[Corollary~2.2]{MR693798} for an equivalent characterization of
Eichler orders. The $R$-ideal index \cite[Chapter~III, \S1]{Serre_local} of an Eichler order $O$ relative to any maximal $R$-order is called the \emph{level} of $O$. 
Assume further that $R$ is a complete discrete
valuation ring, and let $\pi$ be a local parameter of $R$. If $D$ is
division, then there is a unique maximal order in $D$, hence a unique
Eichler order $O$.
Any $O$-lattice in a left $D$-vector space of dimension $m$ is
isomorphic to $O^m$.  Next, suppose that $D=\Mat_2(F)$. Then any
maximal order is conjugate to $M_2(R)$. An order $O$ is Eichler if and
only if there exists a nonnegative integer $n\geq 0$ such that $O$ is
conjugate to $O_n:=
  \begin{bmatrix}
    R & R\\ \pi^nR& R
  \end{bmatrix}$. Up to isomorphism, any $O_n$-lattice in the left
  $\Mat_2(F)$-module $F^{2m}$ has the
   form  
$\bigoplus_{i=1}^w\begin{bmatrix}
  R\\\pi^{t_i}R
\end{bmatrix}^{s_i}$, where 
\begin{equation}\label{eq:44}
  \sum_{i=1}^w s_i=m\quad \text{and}\quad n\geq
t_1>\cdots>t_w\geq 0.
\end{equation}
 This gives rise to a bijection between the isomorphism
classes of $O_n$-lattices in $F^{2m}$ and the pairs of tuples
$(\underline{s}, \underline{t})\in \bbN^w\times \bbZ_{\geq 0}^w$ (for some $w$
between $1$ and $m$)
with $\underline{s}=(s_1, \ldots, s_w)$ and
$\underline{t}=(t_1, \ldots, t_w)$  satisfying (\ref{eq:44}).  We
refer to \cite[Chapter~II, \S2]{vigneras} and \cite[Chapter~1, \S1.2]{Alsina-Bayer} for more details on Eichler
orders.

\begin{rem}\label{rem:assumption}
We explain what we mean by ``a mild condition on $p$" at the beginning Section~\ref{sec:stat-main-results}. For ease of exposition of the present paper, we will work out the
calculation of each $o(\uln)$ in Theorem~\ref{card.1} under the
assumption that $A_{\uln, p}=A_\uln\otimes_\Z\Z_p$ is an \'etale
$\Z_p$-algebra.  If $r=1$, this simply requires $p$ to be unramified
in $K_n$.  For $r=2$, an equivalent but more concrete assumption on
$p$ is made at the beginning of Section~\ref{sec:non-elem-case}.  The
purpose for this assumption is so that
$\scrA_{\uln, p}=A_{\uln, p}\otimes_{\Z_p}\calO_p$ is a product of
Eichler orders (cf. \cite[Lemma~2.11]{li-xue-yu:unit-gp}), and 
the proofs of Propositions~\ref{quadratic_case}, \ref{quartic_case},
and \ref{prop:p-lattices} rely on this result (also compare with
\cite[Lemma~2.12]{li-xue-yu:unit-gp} for the case where $p$ is
ramified in $K_n$). Note that the assumption holds automatically when
$p\geq 7$ so it rules out at most $p=2,3,5$.
\end{rem}

The remaining part of the paper is organized as
follows. Section~\ref{sec:elem-case} treats the \textit{isotypic} case
where $r=1$.  The \emph{non-isotypic} case where $r=2$ is treated in
Section~\ref{sec:non-elem-case}. The calculation of class numbers of
certain complicated orders arising in Section~\ref{sec:non-elem-case}
is postponed to Section~\ref{sec:class-numb-ord}.  The handful of
cases where the assumption fails will be treated in an upcoming paper
\cite{xue-yu-zheng:spIII}, where the ramification requires much
greater care.

\section{Computations of the isotypic cases}
\label{sec:elem-case}

We keep the notation of Section~\ref{sec:stat-main-results} and put
 $D=D_{p, \infty}$,  the quaternion
$\Q$-algebra ramified exactly at the prime $p$ and $\infty$. 
The goal of
this section is to calculate the terms $o(n)$ with $n\in \{3, 4, 5, 8,
12\}$ in Theorem~\ref{card.1},
under the assumption that $p$ is \textit{unramified} in $K_n$ 
(i.e.~$p\nmid n$).  
According to (\ref{eq:43}), this covers the
isotypic case for such $p$. 
Note that $p$ splits completely in $K_n$ if and only if
$p\equiv 1\pmod{n}$.  By the discussion at the beginning of
Section~\ref{sec:card.2}, if $n\in \{5,8, 12\}$ then we further assume 
that $p\not\equiv 1\pmod{n}$, for otherwise $o(n)=0$.

The cyclotomic field $K_n$ with $n\in \{3,4,5,8,12\}$ has class number 1 by
\cite[Theorem~11.1]{Washington-cyclotomic}.  For $n\in \{3,4\}$ and
$p\equiv 1\pmod{n}$, let $\scrD_n$ denote the quaternion $K_n$-algebra
ramified exactly at the two places of $K_n$ above $p$.  Since $D^\opp$
is canonically isomorphic to $D$, we have
\begin{equation}
  \label{eq:19}
\scrK_n=K_n\otimes_\Q D=
\begin{cases}
  \scrD_n \quad &\text{if } n\in \{3, 4\} \text{ and } p\equiv 1\pmod{n},\\
  \Mat_2(K_n) \quad &\text{otherwise}.
\end{cases}
\end{equation}
The order $\scrA_n\subset \scrK_n$ is maximal at every prime
$\ell\neq p$. It is also maximal at $p$ when $n\in \{3,4\}$ and
$p\equiv 1\pmod{n}$.  Let $V\simeq D^2$ be the unique \textit{faithful}
left $\scrK_n$-module of $D$-dimension $2$ (as a right $D$-vector
space).  Then $V$ is a free $\scrK_n$-module of rank 1 if $n\in
\{3,4\}$, and a simple $\scrK_n$-module if $n\in
\{5, 8, 12\}$. By (\ref{eq:11}) and (\ref{eq:13}), we have
\begin{equation}
  \label{eq:20}
\scrE_n:=\End_{\scrK_n}(V)\simeq
  \begin{cases}
    K_n\otimes_\Q D  \quad &\text{if }  n\in \{3, 4\},\\
    K_n\quad &\text{if }  n\in \{5, 8, 12\}.\\
  \end{cases}
\end{equation}
If $n\in \{3, 4\}$, then $\scrE_n$ is a quaternion algebra over the
imaginary quadratic  field $K_n$. Hence $\scrE_n$ verifies the Eichler condition
\cite[Definition~34.3]{reiner:mo}, and
$\Nr(\scrE_n^\times)=K_n^\times$ by \cite[Theorem~III.4.1]{vigneras}. 

Let $\Lambda$ be an $\scrA_n$-lattice in $V$, and
$O_\Lambda:=\End_{\scrA_n}(\Lambda)$. The order
$O_\Lambda\subset \scrE_n$ is maximal at every prime $\ell\neq p$ by the
maximality of $\scrA_{n,\ell}$.  If $n\in \{5, 8, 12\}$, then
$A_n\subseteq O_\Lambda\subset K_n$, and hence $O_\Lambda=A_n$, which
has class number $1$. If $n\in \{3, 4\}$, then $O_\Lambda$ is an
$A_n$-order in $\scrE_n$. We claim that $h(O_\Lambda)=1$ in this case
as well. If $p$ is inert in $K_n$, then it will be shown that
$O_\Lambda$ is an Eichler order in Proposition~\ref{quadratic_case},
otherwise $O_\Lambda$ is maximal in $\scrE_n$. Thus
$h(O_\Lambda)=h(A_n)=1$ by \cite[Corollaire~III.5.7]{vigneras}. 
It follows that for all $n\in \{3,4,5,8,12\}$ and $p\nmid n$, 
\begin{equation}
  \label{eq:23}
\qquad  o(n)=\abs{\prod_\ell\scrL_\ell(n)}=\abs{\scrL_p(n)}. 
\end{equation}

For each $f\in \bbN$, let $\Q_{p^f}$ be the
unique unramified extension of degree $f$ over $\Q_p$, and  $\Z_{p^f}$
be its  ring of integers.


\begin{prop} \label{quadratic_case}
Suppose that $n\in \{3,4\}$ and $p\nmid n$. Then 
\[o(3)  = 2-\left (
    \frac{-3}{p} \right)\qquad \text{and}\qquad 
o(4)=2-\left (\frac{-4}{p} \right).\]
\end{prop}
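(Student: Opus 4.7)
The plan is to invoke (\ref{eq:23}), which reduces $o(n)$ to $\abs{\scrL_p(n)}$, the number of isomorphism classes of $\scrA_{n,p}$-lattices in $V_p$. Under the hypothesis $p\nmid n$, the prime $p$ is unramified in $K_n$, so it either splits or remains inert; these two cases correspond exactly to the values $+1$ and $-1$ of $\left(\frac{-n}{p}\right)$, and I would dispose of them separately.

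In the split case, $A_{n,p}\cong \Z_p\times \Z_p$ and hence $\scrA_{n,p}\cong \calO_p\times \calO_p$ is a product of two maximal orders in $\scrK_{n,p}\cong D_p\times D_p$ (recall $\scrK_n=\scrD_n$ is ramified at the two primes above $p$ in this case). Correspondingly, $V_p$ decomposes as a product of two faithful simple $D_p$-modules, each carrying a unique lattice class since $\calO_p$ is the unique maximal order in the division algebra $D_p$. This yields $\abs{\scrL_p(n)}=1=2-1$, matching the formula.

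The substantial part is the inert case, where $A_{n,p}\cong \Z_{p^2}$ is the ring of integers of the unramified quadratic extension $\Q_{p^2}/\Q_p$, which splits $D_p$ so that $\scrK_{n,p}\cong \Mat_2(\Q_{p^2})$. The key step is to identify $\scrA_{n,p}=\Z_{p^2}\otimes_{\Z_p}\calO_p^\opp$ as an Eichler order of level $p$ in $\Mat_2(\Q_{p^2})$. To do this, I would write $\calO_p=\Z_{p^2}[\Pi]$ with $\Pi^2=p$ and $\Pi a=\tau(a)\Pi$ for $a\in \Z_{p^2}$, where $\tau$ generates $\Gal(\Q_{p^2}/\Q_p)$, and then compute the image of $\scrA_{n,p}$ inside $\End_{\Z_{p^2}}(\calO_p)\cong \Mat_2(\Z_{p^2})$ via the $\Z_{p^2}$-basis $\{1,\Pi\}$ of $\calO_p$ (with $a\otimes b^\opp$ acting by $x\mapsto axb$). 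A routine matrix calculation shows the image equals $\begin{pmatrix} \Z_{p^2} & p\Z_{p^2} \\ \Z_{p^2} & \Z_{p^2} \end{pmatrix}$, a conjugate of the standard Eichler order $O_1$ of level $p$.

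With this identification in hand, I would conclude via the lattice classification for Eichler orders recorded in Section~\ref{sec:card.2}. Since $V$ is free of rank one over $\scrK_n$, localizing gives $V_p\cong \scrK_{n,p}\cong \Mat_2(\Q_{p^2})$, i.e.~$V_p\cong (\Q_{p^2})^{4}$ in the notation $F^{2m}$ with $F=\Q_{p^2}$ and $m=2$. With Eichler level parameter $n=1$, the admissible pairs $(\underline s,\underline t)$ satisfying $\sum s_i=2$ and $1\ge t_1>\cdots>t_w\ge 0$ are exactly $((2),(0))$, $((2),(1))$, and $((1,1),(1,0))$, giving three isomorphism classes. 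Hence $\abs{\scrL_p(n)}=3=2-(-1)$, as required. The anticipated obstacle is the identification step for $\scrA_{n,p}$ in the inert case: the computation itself is linear algebra, but the bookkeeping around left/right actions and the opposite ring structure needs care to land on exactly Eichler level $p$ rather than level $p^2$ or a maximal order.
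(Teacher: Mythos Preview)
Your proposal is correct and follows essentially the same route as the paper. The only cosmetic differences are that the paper cites \cite[Corollaire~II.1.7]{vigneras} for the identification of $\scrA_{n,p}$ as an Eichler order of level $p$ rather than computing it by hand, and then lists the three lattice classes explicitly instead of invoking the general Eichler-order lattice parametrization from the end of Section~\ref{sec:card.2}.
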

\begin{proof}
  If $p$ splits in $K_n$, then $\scrA_n$ is a maximal order in $\scrK_n$, so
  there is a unique genus of $\scrA_n$-lattices in $V$. We have
  $o(n)=1$ by (\ref{eq:23}). 

Suppose that $p$ is inert in $K_n$. Then $e(n)=1$, and $V$ is a free
$\scrK_n$-module of rank $1$.  We have $A_{n,p}=A_n\otimes\Z_p=\Z_{p^2}$, so by
\cite[Corollaire~II.1.7]{vigneras},
\[\scrA_{n, p}=A_{n,p}\otimes_{\Z_p}\calO_p\simeq
\begin{pmatrix}
  \Z_{p^2} & \Z_{p^2}\\
  p\Z_{p^2} & \Z_{p^2}
\end{pmatrix}.\]
It follows that any $\scrA_{n, p}$-lattice $\Lambda_p\subseteq V_p$ is
isomorphic to one of the following
\[\begin{pmatrix}
  \Z_{p^2} & \Z_{p^2}\\
  p\Z_{p^2} & p\Z_{p^2}
\end{pmatrix},\quad 
\begin{pmatrix}
  \Z_{p^2} & \Z_{p^2}\\
  p\Z_{p^2}& \Z_{p^2}
\end{pmatrix},\quad 
\begin{pmatrix}
  \Z_{p^2} & \Z_{p^2}\\
  \Z_{p^2} & \Z_{p^2}
\end{pmatrix}.
\]
Correspondingly, $(O_\Lambda)_p$ is isomorphic to 
\[\Mat_2(\Z_{p^2}), \quad \begin{pmatrix}
  \Z_{p^2} & \Z_{p^2}\\
  p\Z_{p^2} & \Z_{p^2}
\end{pmatrix}, \quad \Mat_2(\Z_{p^2}),\]
which verifies the claim above (\ref{eq:23}) that $O_\Lambda$ is an Eichler order when $p$ is
inert in $K_n$.  We conclude
that $o(n)=3$ by (\ref{eq:23}).
\end{proof}

\begin{prop} \label{quartic_case}
Suppose that $n\in \{5, 8, 12\}$ and $p\nmid n$. Then the formulas for
$o(n)$ in Theorem~\ref{card.1} hold. More explicitly,
\begin{enumerate}
\item $o(n)=0$ if $p\equiv 1\pmod{n}$;
\item $o(5)=2$ if $p\equiv 2,3\pmod{5}$;
\item $o(n)=4$ in the remaining cases.
\end{enumerate}
\end{prop}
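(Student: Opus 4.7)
The plan is to parallel the strategy of Proposition~\ref{quadratic_case}: use (\ref{eq:23}) to reduce $o(n)$ to a local lattice count at $p$, and then apply the lattice classification for Eichler orders summarized just before Remark~\ref{rem:assumption}. Part (1) is immediate from the general framework: if $p\equiv 1\pmod n$ then $p$ splits completely in $K_n$, so $e(n)=4$, and the equation $me(n)=2$ has no solution in $\bbN$; hence $\calT(1)$ contains no pair of the form $(n,m)$, so $C(n)=\emptyset$ and $o(n)=0$, consistent with what is already noted at the start of Section~\ref{sec:elem-case}.

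For parts (2) and (3), the hypothesis $p\nmid n$ with $p\not\equiv 1\pmod n$ means $K_n/\Q$ is unramified at $p$, so I would first read off the factorization of $p$ from the order of $p$ in $(\Z/n)^\times$. A direct case-check shows that each residue degree $f_{\mathfrak p}$ is even: for $n=5$ with $p\equiv 2,3\pmod 5$ there is a unique prime above $p$ with $f=4$; in every other admissible case (namely $n=5$ with $p\equiv 4\pmod 5$, $n=8$ with $p\equiv 3,5,7\pmod 8$, and $n=12$ with $p\equiv 5,7,11\pmod{12}$) there are exactly two primes above $p$, each of residue degree $2$. Correspondingly $A_{n,p}=\Z_{p^4}$ in the first case and $\Z_{p^2}\times \Z_{p^2}$ in the second.

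Since every $f_{\mathfrak p}$ is even, $D_p$ splits over $K_{n,\mathfrak p}=\Q_{p^{f_{\mathfrak p}}}$, and the same base-change argument used in the proof of Proposition~\ref{quadratic_case} (via \cite[Corollaire~II.1.7]{vigneras}) identifies each local factor $\Z_{p^{f_{\mathfrak p}}}\otimes_{\Z_p}\calO_p^\opp$ with the Eichler order of level $p\Z_{p^{f_{\mathfrak p}}}$ in $\Mat_2(\Q_{p^{f_{\mathfrak p}}})$. On the module side, $V=K_n^2$ decomposes $p$-adically as $\bigoplus_{\mathfrak p}K_{n,\mathfrak p}^2$, so each local Eichler factor acts on its unique simple module with multiplicity $m=1$. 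Feeding $m=1$ and level exponent $1$ into the lattice classification (\ref{eq:44}) forces $w=1$, $s_1=1$, and $t_1\in\{0,1\}$, giving exactly two local isomorphism classes at each prime $\mathfrak p$ above $p$. Multiplying these contributions, together with $|\scrL_\ell(n)|=1$ for $\ell\neq p$ and $h(A_n)=1$ (so that (\ref{eq:23}) applies), yields $o(5)=2$ when $p\equiv 2,3\pmod 5$ and $o(n)=4$ in all remaining cases.

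The only point that really needs attention is the identification of $\calO_p\otimes_{\Z_p}\Z_{p^f}$ with an Eichler order of level $p$ in $\Mat_2(\Q_{p^f})$ when $f$ is even; this is exactly the local computation carried out in Proposition~\ref{quadratic_case}, and once it is in hand the rest of the argument is routine bookkeeping over the primes above $p$.
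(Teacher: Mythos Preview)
Your proposal is correct and follows essentially the same approach as the paper: both reduce to the local count $\abs{\scrL_p(n)}$ via (\ref{eq:23}), identify $\scrA_{n,p}$ as a product of Eichler orders of level $p$ over $\Z_{p^f}$ (one factor when $f=4$, two when $f=2$), and count two lattice classes per factor. The only cosmetic difference is that the paper writes out the explicit $2\times 2$ matrix forms of the orders and lattices, whereas you invoke the parametrization (\ref{eq:44}) directly; the content is the same.
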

\begin{proof}
  Only part (2) and (3) need to be proved.  Suppose that
  $p\not\equiv 0, 1\pmod{n}$. Then $e(n)=2$, and $V$ is a simple
  $\scrK_n$-module.  

  If $n=5$ and $p\equiv 2,3\pmod{5}$, then
  \[A_{5,p}\simeq \Z_{p^4},\quad \text{and}\quad \scrA_{5, p}=A_{5,p}\otimes_{\Z_p}\calO_p\simeq
\begin{pmatrix}
  \Z_{p^4} & \Z_{p^4}\\
  p\Z_{p^4} & \Z_{p^4}
\end{pmatrix}.\]
Any $\scrA_{5, p}$-lattice $\Lambda_p\subseteq V_p$ is
isomorphic to $\begin{pmatrix}
  \Z_{p^4} \\ p\Z_{p^4}
\end{pmatrix}$ or $\begin{pmatrix}
  \Z_{p^4} \\ \Z_{p^4}
\end{pmatrix}$. Hence $o(5)=2$ in this case. 
 
For the remaining cases, we have 
\[\scrA_{n, p}=A_{n,p}\otimes_{\Z_p}\calO_p\simeq (\Z_{p^2}\times
\Z_{p^2})\otimes_{\Z_p} \calO_p\simeq
\begin{pmatrix}
  \Z_{p^2} & \Z_{p^2}\\
  p\Z_{p^2} & \Z_{p^2}
\end{pmatrix}\times \begin{pmatrix}
  \Z_{p^2} & \Z_{p^2}\\
  p\Z_{p^2} & \Z_{p^2}
\end{pmatrix}.\]
Every $\scrA_{n, p}$-lattice $\Lambda_p\subseteq V_p$ decomposes into
$\Lambda_p^{(1)}\oplus \Lambda_p^{(2)}$, where each $\Lambda_p^{(i)}$
is a $\begin{pmatrix}
  \Z_{p^2} & \Z_{p^2}\\
  p\Z_{p^2} & \Z_{p^2}
\end{pmatrix}$-lattice
in the simple $\Mat_2(\Q_{p^2})$-module $V_p^{(i)}\simeq (\Q_{p^2})^2$.  There
are 2 isomorphism classes of $\Lambda_p^{(i)}$ for each
$i=1,2$. Therefore, $o(n)=2^2=4$.
\end{proof}

\section{Computations of the non-isotypic cases}
\label{sec:non-elem-case}

In this section,   we calculate the values of $o(\uln)$ with
\begin{equation} 
  \label{eq:24}
\uln=(n_1, n_2)\in \{(1,2), (2,3), (2,4), (2,6), (3, 4), (3,6)\}. 
\end{equation}
According to (\ref{eq:45}),
this treats all the non-isotypic cases. 
As mentioned in Remark~\ref{rem:assumption}, 
we assume that $A_{\uln, p}=A_\uln\otimes \Z_p$ is
\'etale over $\Z_p$.  Equivalently, $p$ is assumed to satisfy the
following two conditions:
\begin{enumerate}
\item[(I)] $p$ is unramified in $K_{n_i}=\Q[T]/(\Phi_{n_i}(T))$ for both $i=1,2$;
\item[(II)] $A_{\uln, p}=\Z_p[T]/(\Phi_{n_1}(T)\Phi_{n_2}(T))$ is a
  maximal $\Z_p$-order in
  $K_{\uln,p}$. 
\end{enumerate}
This rules out at most $p=2,3$ according to Table~\ref{tab:index-An} below. There exists a \textit{faithful} left $\scrK_\uln$-module
$V\simeq D^2$  if and only if
$e(n_i)=1$ for both $i=1,2$.  Thus $o(\uln)=0$ unless $p$ is inert in
$K_{n_i}$ when
$[K_{n_i}:\Q]=2$.  
So we make further restrictions on $p$ as listed in
Table~\ref{tab:index-An}.

\subsection{General structures} 
We explore the general structures
of objects of interest such as $A_\uln$, $\scrL_p(\uln)$ and so on
for all $\uln$ in (\ref{eq:24}).  This sets up the stage for a
case-by-case calculation of $o(\uln)$ in the next subsection. 

By (\ref{eq:10}),  $V=V_{n_1}\oplus V_{n_2}$, where each
$V_{n_i}$ is a simple $\scrK_{n_i}$-module with $\dim_DV_{n_i}=1$.
Therefore, 
$\scrE_\uln:=\End_{\scrK_\uln}(V)=\End_{\scrK_{n_1}}(V_{n_1})\times \End_{\scrK_{n_2}}(V_{n_2})$,
and
\begin{equation}
  \label{eq:26}
\forall i=1,2,\qquad   \End_{\scrK_{n_i}}(V_{n_i})=
  \begin{cases}
   D \qquad &\text{if } K_{n_i}=\Q;\\
   K_{n_i}\qquad &   \text{if } [K_{n_i}:\Q]=2.
  \end{cases} 
\end{equation}

 Let $O_{K_\uln}=\Z[T]/(\Phi_{n_1}(T))\times
\Z[T]/(\Phi_{n_2}(T))$ be the maximal order of $K_\uln$. There is an exact sequence of $A_\uln$-modules 
\begin{equation}
  \label{eq:27}
  0\to A_\uln\to O_{K_\uln}\xrightarrow{\psi}
  \Z[T]/(\Phi_{n_1}(T),\Phi_{n_1}(T))\to 0, 
\end{equation}
where $\psi: (x,y)\mapsto\bar{x}-\bar{y}$.  The indices 
$[O_{K_\uln}:A_\uln]$ are listed in Table~\ref{tab:index-An}. 

\begin{table}[!htbp]
\centering
\renewcommand{\arraystretch}{1.3}
\caption{}\label{tab:index-An}
\begin{tabular}{|c|c|c|c|c|}  \hline
$\uln$      & $K_\uln=K_{n_1}\times K_{n_2}$    & $[O_{K_\uln}:A_\uln]$ & $\scrE_\uln$ 
& Conditions on $p$ \\ \hline
$(1,2)$   & $\Q\times\Q $ & $2$ & $D\times D$ & $p\neq 2$ \\ \hline 

$(2,3)$ & $\Q\times \Q(\sqrt{-3})$ & $1$ &
$D\times\Q(\sqrt{-3})$ & $p\equiv 2\ (3)$ \\ \hline

$(2,4)$   & $\Q \times\Q(\sqrt{-1})$ & 
$2$ & $D \times\Q(\sqrt{-1})$ & $p\equiv 3\ (4)$ \\ \hline

$(2,6)$   & $\Q \times\Q(\sqrt{-3})$ & 
$3$ & $D \times\Q(\sqrt{-3})$ & $p\equiv 2\ (3)$ \\ \hline 

$(3,4)$ &  $\Q(\sqrt{-3})\times\Q(\sqrt{-1})$ &
$1$ &  $\Q(\sqrt{-3})\times\Q(\sqrt{-1})$ & $p\equiv 11 \ (12)$ \\
  \hline

$(3,6)$   & $\Q(\sqrt{-3})\times\Q(\sqrt{-3})$ & 
$4$ & $\Q(\sqrt{-3})\times\Q(\sqrt{-3})$ & $p\equiv 2\ (3),\ p\neq 2$ \\ \hline
\end{tabular}
\end{table} 

Observe that $A_{(2,3)}$ and $A_{(3,4)}$ are maximal orders. Let $\grp_2$ (resp.~$\grq_2$) be the unique dyadic prime of $A_4$ (resp.~$A_3$), and $\grp_3$ be the unique prime ideal of $A_6$ above $3$.  Then $A_4/\grp_2\simeq \F_2$ and $A_6/\grp_3\simeq \F_3$. 
We write down the non-maximal orders $A_\uln$ explicitly using (\ref{eq:27}):
\begin{align}
  A_{(1,2)}&=\{(a,b)\in \Z\times \Z\mid a\equiv b\pmod{2}\};\label{eq:33}\\
  A_{(2, 4)}&=\{(a,b)\in \Z\times A_{4}\mid a\equiv
  b\pmod{\grp_2}\};\label{eq:34}\\  
  A_{(2, 6)}&=\{(a,b)\in \Z\times A_{6}\mid a\equiv
  b\pmod{\grp_3}\};\label{eq:345}\\ 
  A_{(3,6)}&\simeq\{(a,b)\in A_3\times A_3\mid a\equiv
  b\pmod{\grq_2}\},\label{eq:35}
\end{align}
where $A_6=\Z[T]/(T^2-T+1)$ is identified with $A_3=\Z[T]/(T^2+T+1)$
via a change of variable $T\mapsto -T$. Applying 
\cite[Lemma~7.2]{xue-yang-yu:sp_as} if necessary, we have 
\begin{equation}
  \label{eq:28}
  h(A_{3,4})=h(A_{3,6})=1. 
\end{equation}
Recall that the class number of $\calO$ is given by 
\begin{equation}
\label{eq:29}
      h(\calO)=\frac{p-1}{12}+\frac{1}{3}\left
   (1-\left(\frac{-3}{p}\right )\right )+\frac{1}{4}\left
   (1-\left(\frac{-4}{p}\right )\right ).
\end{equation}

By our assumptions, the order $\scrA_\uln$ is non-maximal at a prime
$\ell\in \bbN$ if and only if one of the following mutually exclusive
conditions holds: 
\begin{enumerate}[(i)]
\item $\ell=p$ and $\uln\neq (1,2)$; 
\item $\ell\mid [O_{K_\uln}:A_\uln]$.
\end{enumerate}

\begin{prop}\label{prop:p-lattices}
Let $\uln=(n_1, n_2)$ be a pair in (\ref{eq:24}), and $p\in \bbN$ a prime
satisfying the corresponding condition in
Table~\ref{tab:index-An}. Then \[\abs{\scrL_p(\uln)}=[K_{n_1}:\Q][K_{n_2}:\Q]. \]
For every $\scrA_\uln$-lattice $\Lambda\subset V$, the endomorphism
ring $O_\Lambda=\End_{\scrA_\uln}(\Lambda)$ is maximal at $p$.
\end{prop}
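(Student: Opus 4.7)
The plan is to exploit the étale hypothesis (II) to split the local order $\scrA_{\uln, p}$ into a direct product indexed by $i = 1, 2$ and then read off each factor from the analysis already carried out in Section~\ref{sec:elem-case}. Under (II), the polynomials $\Phi_{n_1}$ and $\Phi_{n_2}$ are coprime modulo $p$, so the Chinese Remainder Theorem gives $A_{\uln, p} = A_{n_1, p} \times A_{n_2, p}$. Tensoring over $\Z_p$ with $\calO_p$ yields a ring decomposition $\scrA_{\uln, p} = \scrA_{n_1, p} \times \scrA_{n_2, p}$ that is compatible with the splitting $V_p = V_{n_1, p} \oplus V_{n_2, p}$ coming from (\ref{eq:10}). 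As a consequence, every $\scrA_{\uln, p}$-lattice $\Lambda_p \subset V_p$ decomposes uniquely as $\Lambda_p = \Lambda_{n_1, p} \oplus \Lambda_{n_2, p}$, and $\End_{\scrA_{\uln, p}}(\Lambda_p) = \prod_{i=1}^{2} \End_{\scrA_{n_i, p}}(\Lambda_{n_i, p})$. Thus both assertions reduce to a factor-by-factor check.

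I would then analyse each factor separately, keeping in mind the simplification $\dim_D V_{n_i} = 1$ that is in force throughout this section. When $K_{n_i} = \Q$, so $n_i \in \{1, 2\}$, the local order $\scrA_{n_i, p} = \calO_p$ is the unique maximal order of the quaternion division algebra $D_p$; since $V_{n_i, p} \simeq D_p$, there is a single isomorphism class of lattices, represented by $\calO_p$ itself, whose endomorphism ring is $\calO_p$. When $[K_{n_i}:\Q] = 2$, condition (I) combined with the requirement from Table~\ref{tab:index-An} forces $p$ to be inert in $K_{n_i}$, so, exactly as in the proof of Proposition~\ref{quadratic_case}, $\scrA_{n_i, p}$ is the Eichler order of level $p$ inside $\Mat_2(\Q_{p^2})$ acting on the simple module $V_{n_i, p} \simeq (\Q_{p^2})^2$. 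Applying the classification of Eichler-order lattices recalled in (\ref{eq:44}) with $m = w = 1$ and $n = 1$ (so $t_1 \in \{0, 1\}$) produces exactly two isomorphism classes, and in either case the endomorphism ring sits inside $\End_{\scrK_{n_i, p}}(V_{n_i, p}) = \Q_{p^2}$ and contains the central image of $A_{n_i, p} = \Z_{p^2}$, hence coincides with the maximal order $\Z_{p^2}$.

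Multiplying the contributions of the two indices gives the advertised count $\abs{\scrL_p(\uln)} = [K_{n_1}:\Q][K_{n_2}:\Q]$, and a product of local maximal orders is again maximal in $\scrE_{\uln, p}$, which settles the endomorphism ring claim. No serious obstacle arises: the real content is the observation that the étale hypothesis (II) is strong enough to trivialise the interaction between the two cyclotomic components, so that the entire calculation is dictated by the local results obtained in Section~\ref{sec:elem-case}. The only point that requires a moment's care is that $V_{n_i, p}$ really is simple (rather than of higher multiplicity) in the quadratic case, which rules out the intermediate ``mixed'' lattices encountered in the proof of Proposition~\ref{quadratic_case} and explains why only two, rather than three, lattice classes appear per factor here.
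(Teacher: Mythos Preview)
Your proof is correct and follows essentially the same approach as the paper's own proof: both use assumption (II) to split $\scrA_{\uln,p}$ as $\scrA_{n_1,p}\times\scrA_{n_2,p}$, reduce to a factor-by-factor analysis, and then handle the cases $K_{n_i}=\Q$ and $[K_{n_i}:\Q]=2$ exactly as you do (one lattice class with endomorphism ring $\calO_p$ in the former, two lattice classes each with endomorphism ring $\Z_{p^2}$ in the latter). Your containment argument for the endomorphism ring in the quadratic case is a mild rephrasing of the paper's direct identification, but the substance is identical.
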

\begin{proof}
  By assumption (II), $A_{\uln, p}=A_{n_1,p}\times A_{n_2,
    p}$. Consequently, $\Lambda_p$ decomposes as $\Lambda_{n_1,p}\oplus
  \Lambda_{n_2,p}$, where each $\Lambda_{n_i,p}$ is an $\scrA_{n_i, p}$-lattice in
  the simple $\scrK_{n_i,p}$-module $V_{n_i,p}$. It is enough to show
  that the number of isomorphism classes of $\scrA_{n_i, p}$-lattices in
  $V_{n_i,p}$ is $[K_{n_i}:\Q]$, and $\End_{\scrA_{n_i,
      p}}(\Lambda_{n_i,p})$ is maximal for each $i=1,2$.

  If $K_{n_i}=\Q$, then $\scrA_{n_i, p}=\calO_p$. We have
  $\Lambda_{n_i,p}\simeq \calO_p$, and
  $\End_{\scrA_{n_i, p}}(\Lambda_{n_i,p})=\calO_p$.

If $[K_{n_i}:\Q]=2$, then $A_{n_i,p}\simeq \Z_{p^2}$ since
  $p$ is inert in $K_{n_i}$ by our assumption. It follows that
  $\scrA_{n_i, p}=\Z_{p^2}\otimes_{\Z_p} \calO_p\simeq
  \begin{pmatrix}
    \Z_{p^2} & \Z_{p^2}\\
    p\Z_{p^2}& \Z_{p^2}
  \end{pmatrix}$, and  $\Lambda_{n_i,p}$ is isomorphic to either $  \begin{pmatrix}
    \Z_{p^2} \\
    p\Z_{p^2}
  \end{pmatrix}$ or   $\begin{pmatrix}
    \Z_{p^2} \\
   \Z_{p^2}
  \end{pmatrix}$. In both cases,  $\End_{\scrA_{n_i, p}}(\Lambda_{n_i,p})=\Z_{p^2}$.
\end{proof}

Next, we consider the other class of primes at which $\scrA_\uln$ is
non-maximal, namely the prime divisors of $[O_{K_\uln}:A_\uln]$.  According to Table~\ref{tab:index-An}, there exists a
prime $\ell$ dividing $[O_{K_\uln}:A_\uln]$ only if 
\begin{equation}\label{eq:31}
\uln\in \{(1,2), (2,4), (2,6), (3,6)\}, 
\end{equation}
and each $\uln$ above determines uniquely such a  prime $\ell$. Since
$\ell\neq p$ by our assumption, we have
$\calO_{\ell}\simeq \Mat_2(\Z_{\ell})$. By
Remark~\ref{rem:simplifications}(ii), the classification of isomorphism
classes of $\scrA_{\uln,\ell}$-lattices in $V_{\ell}$ reduces to that of
$A_{\uln, \ell}$-lattices in the $K_{\uln, \ell}$-module $V'_{\ell}$, where $V_{\ell}=(V'_{\ell})^2$. The value of $\ell$ and the structure of $V'_\ell$ for each $\uln$ is given by the following table. 

\begin{table}[htbp]
\renewcommand{\arraystretch}{1.3}
\centering
\caption{}\label{tab:V-ell}
\begin{tabular}{*{3}{|>{$}c<{$}}|}
\hline
\uln & \ell &  V'_\ell\\
\hline
(1,2) & 2 & (K_{\uln, \ell})^2=(\Q_2\times \Q_2)^2\\
\hline
(2,4) & 2 & (K_{2,\ell})^2\times K_{4,\ell}=\Q_2^2\times K_{4, 2}\\
\hline
(2,6) & 3 & (K_{2,\ell})^2\times K_{6,\ell}=\Q_3^2\times K_{6, 3}\\
\hline
(3,6) & 2 & K_{\uln, \ell}=\Q_4\times \Q_4\\
\hline
\end{tabular}
\end{table}


To classify the isomorphism classes of
$A_{\uln, \ell}$-lattices in $V_\ell'$ in each of the above cases, we apply
the theory of \emph{Bass orders}. Recall that a Bass order is a
Gorenstein order for which every order containing it in the
ambient algebra is Gorenstein
as well \cite[Section 37]{curtis-reiner:1}.  We provide a
couple of equivalent characterizations in the commutative case. Let
$R$ be a Dedekind domain with fractional field $F$, and $B$ be an
$R$-order in a finite dimensional separable semisimple $F$-algebra
$E$. Denote the maximal $R$-order in $E$ by $O_E$. The following are
equivalent:
\begin{enumerate}
\item[(i)] $B$ is a Bass order, i.e. every $R$-order $B'$ with
  $B\subseteq B'\subseteq O_E$ is Gorenstein;
\item[(ii)] every ideal $I$ of $B$ can be generated by  two elements;
\item[(iii)] the quotient $O_E/B$ is a cyclic $B$-module. 
\end{enumerate}
Characterization (ii) above is due to Bass
\cite[\S7]{Bass-MathZ-1963}, and (iii) is due to Borevich and Faddeev
(see \cite[\S37, p.789]{curtis-reiner:1}). Thanks to (iii) and (\ref{eq:27}),
$A_\uln$ is a Bass order for any arbitrary\footnote{However, the same 
does not hold 
for $\uln\in \brN^r$ with $r\geq 3$, because $A_{(1,2,4)}=\Z[T]/(T^4-1)$
  already provides a counterexample. }
$\uln\in \brN^2$. The Bass property is local in
the sense that $B$ is Bass if and only if $B_\grp$ is Bass
for every nonzero prime $\grp\subseteq R$.  In particular, $A_{\uln,
  \ell}$ is Bass for all $\uln$ in (\ref{eq:31}) and the corresponding
prime $\ell$ determined by $\uln$. 

Note that $V_\ell'$ is a free $K_{\uln, \ell}$-module when
$\uln=(1,2)$ or $(3,6)$. Let $B$ be a commutative 
Bass order and $M$ be a $B$-lattice in
a \emph{free} $E$-module of rank $m$. It follows from the result of
Borevich and Faddeev (ibid.) that there exists an ascending chain of
$R$-orders
\begin{equation}
  \label{eq:46}
B\subseteq   B_1\subseteq \cdots \subseteq B_m
\end{equation}
and an invertible $B_m$-ideal $J$ such that 
\[M\simeq B_1\oplus \cdots \oplus B_{m-1}\oplus J.  \]
The chain
of orders (\ref{eq:46}) and the isomorphism class of $J$ in the Picard
group $\Pic(B_m)$ determine uniquely the $B$-isomorphism class of $M$,
and vice versa.  If $R$ is local, then $\Pic(B_m)$ is trivial,
and we have 
\begin{equation}
  \label{eq:48}
  M\simeq B_1\oplus \cdots \oplus B_m.  
\end{equation}
In this case, the chain
(\ref{eq:46}) alone forms the isomorphic invariant of $M$.  
We will  
apply this result in the proofs of Propositions~\ref{prop:3-6} and
\ref{case_o12}. 

For $\uln=(2, 2\ell)$ with $\ell\in \{2,3\}$, the $K_{\uln, \ell}$-module
$V'_\ell$ is no longer free. Nevertheless, we can use the
Krull-Schmidt-Azumaya Theorem \cite[Theorem~6.12]{curtis-reiner:1} to
write any $A_{\uln, \ell}$-lattice $\Lambda_\ell'\subset V'_\ell$
into a direct sum of indecomposable  sublattices. Every indecomposable
lattice over a commutative Bass order is isomorphic to an ideal by
\cite[\S7]{Bass-MathZ-1963} (see \cite[Theorem~37.16]{curtis-reiner:1} for the general case). 
This allows us to classify up to isomorphism all the indecomposable $A_{\uln,
  \ell}$-lattices, and hence all $A_{\uln,
  \ell}$-lattices in $V'_\ell$.  We work out this in detail in
the proof of Proposition~\ref{prop:o(2,2s)}.

\subsection{Case-by-case calculations of $o(\uln)$} We arrange the
calculations of $o(\uln)$ in the order essentially according to the complexity of
$V'_\ell$ as a $K_{\uln, \ell}$-module in Table~\ref{tab:V-ell}.  We first
treat the cases $\uln=(2, 3)$ and $\uln=(3,4)$ in
Proposition~\ref{prop:An-maximal}. The orders $A_\uln$ are already
maximal orders in $K_\uln$ for these two $\uln$, so no classification of
local lattices is needed at any prime distinct from $p$. Next, we
treat the case $\uln=(3, 6)$ in Proposition~\ref{prop:3-6}, where
$V'_\ell$ is a free $K_{\uln, \ell}$-module of rank $1$. After that,
we treat the case $\uln=(1, 2)$ in Proposition~\ref{case_o12}, where
$V'_\ell$ is a free $K_{\uln, \ell}$-module of rank $2$. In both
previous cases, we apply the result of Borevich and Faddeev on Bass
orders. Lastly, we treat the cases $\uln=(2, 2\ell)$ with $\ell\in \{2, 3\}$ in
Proposition~\ref{prop:o(2,2s)}. The $K_{\uln, \ell}$-module $V'_\ell$
is not free for these two $\uln$, so we take the Krull-Schmidt-Azumaya approach
 instead. The calculation of class numbers of
certain complicated orders (to be defined in \eqref{o12_def} and \eqref{eq:37})
is postponed to Section~\ref{sec:class-numb-ord}.

\begin{prop}\label{prop:An-maximal}
(1) $o(2,3)=\left
  (1-\left(\frac{-3}{p}\right )\right )\, h(\calO)$ for all $p\neq
3$. \\
(2) $o(3,4)=\left
  (1-\left(\frac{-3}{p}\right )\right )\left 
  (1-\left(\frac{-4}{p}\right )\right )$   for all $p\neq 2,3$. 
\end{prop}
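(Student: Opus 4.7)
The plan is to observe that for both $\uln=(2,3)$ and $\uln=(3,4)$, Table~\ref{tab:index-An} gives $[O_{K_\uln}:A_\uln]=1$, i.e., $A_\uln$ is already the maximal order of $K_\uln$. Consequently $\scrA_\uln=A_\uln\otimes_\Z\calO^{\opp}$ is maximal at every prime $\ell\neq p$, so the only prime where $\scrA_\uln$ can fail to be maximal is $p$ itself. The classification of genera of $\scrA_\uln$-lattices in $V$ therefore reduces entirely to the classification of $\scrA_{\uln,p}$-lattices in $V_p$, and by Proposition~\ref{prop:p-lattices} the set $\scrL_p(\uln)$ has cardinality $[K_{n_1}:\Q]\cdot[K_{n_2}:\Q]$.

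Next I would invoke Proposition~\ref{prop:p-lattices} again to observe that for every $\scrA_\uln$-lattice $\Lambda\subset V$, the endomorphism ring $O_\Lambda=\End_{\scrA_\uln}(\Lambda)$ is maximal at $p$. Combined with the maximality of $\scrA_\uln$ at all other primes, this shows that $O_\Lambda$ is a maximal order in the semisimple $\Q$-algebra $\scrE_\uln$ read off from Table~\ref{tab:index-An}. Hence the class number $h(O_\Lambda)$ depends only on $\scrE_\uln$ (not on the choice of genus), and factors as a product of class numbers of maximal orders in the simple components of $\scrE_\uln$.

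Putting these two observations together via the genus decomposition
\[
o(\uln)=\sum_{\bbL\in\prod_\ell \scrL_\ell(\uln)} \abs{\scrL(\uln,\bbL)}=\abs{\scrL_p(\uln)}\cdot h(O_\Lambda),
\]
I would then compute case by case. For $\uln=(2,3)$, Table~\ref{tab:index-An} gives $\scrE_\uln\cong D\times\Q(\sqrt{-3})$, so a maximal order is $\calO\times A_3$ with class number $h(\calO)\cdot h(A_3)=h(\calO)$ by $h(A_3)=1$; together with $\abs{\scrL_p(\uln)}=1\cdot 2=2$ and $\left(\tfrac{-3}{p}\right)=-1$ (since $p\equiv 2\pmod 3$), this yields $o(2,3)=2h(\calO)=\bigl(1-\left(\tfrac{-3}{p}\right)\bigr)h(\calO)$. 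For $\uln=(3,4)$, we get $\scrE_\uln\cong\Q(\sqrt{-3})\times\Q(\sqrt{-1})$ with maximal order $A_3\times A_4$ of class number $1$, and $\abs{\scrL_p(\uln)}=2\cdot 2=4$; using $\left(\tfrac{-3}{p}\right)=\left(\tfrac{-4}{p}\right)=-1$ (since $p\equiv 11\pmod{12}$), this yields $o(3,4)=4=\bigl(1-\left(\tfrac{-3}{p}\right)\bigr)\bigl(1-\left(\tfrac{-4}{p}\right)\bigr)$.

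There is no real obstacle here; both cases are routine once Proposition~\ref{prop:p-lattices} is in hand, precisely because $A_\uln$ is maximal and therefore no delicate Bass-order analysis at a prime $\ell\neq p$ is required. The cases requiring the more substantial work (local classification at $\ell\mid [O_{K_\uln}:A_\uln]$ via the Borevich--Faddeev/Krull--Schmidt machinery and the class number computations postponed to Section~\ref{sec:class-numb-ord}) are the remaining $\uln\in\{(1,2),(2,4),(2,6),(3,6)\}$, not these two.
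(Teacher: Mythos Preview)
Your proposal is correct and follows essentially the same approach as the paper: observe that $A_\uln$ is maximal for these two $\uln$, invoke Proposition~\ref{prop:p-lattices} to get $\abs{\scrL_p(\uln)}$ and maximality of $O_\Lambda$, and then compute $o(\uln)=\abs{\scrL_p(\uln)}\cdot h(O_\Lambda)$ componentwise. The only small addition in the paper's version is an explicit sentence noting that for primes $p$ not satisfying the Table~\ref{tab:index-An} condition both sides of the stated formulas vanish, which you leave implicit.
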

\begin{proof}
  Suppose that $\uln\in \{(2,3), (3,4)\}$, and $p$ satisfies the
  corresponding condition in Table~\ref{tab:index-An}.  We have
  $A_\uln=O_{K_\uln}$, so $\scrA_\uln$ is maximal at every prime
  $\ell\neq p$.  
  The
  endomorphism rings of $\scrA_\uln$-lattices in $V$ are maximal
  orders in $\End_{\scrK_\uln}(V)$, which share the same class number.
  It follows that $o(\uln)=\abs{\scrL_p(\uln)}h(O_\Lambda)$ for any
  $\scrA_\uln$-lattice $\Lambda\subset V$.  If $\uln=(2,3)$, then
  $\End_{\scrK_\uln}(V)=D\times K_3$, and
  $h(O_\Lambda)=h(\calO)h(A_3)=h(\calO)$. 
  By Proposition~\ref{prop:p-lattices}, we get $o(2,3)=2 h(\calO)$.
  If $\uln=(3,4)$, then
  $\End_{\scrK_\uln}(V)=K_3\times K_4$, and $O_\Lambda=A_3\times
  A_4=A_{(3,4)}$, which has class number 1 as remarked in (\ref{eq:28}). 
  By Proposition~\ref{prop:p-lattices}, we get $o(3,4)=4$. 
  
  For the remaining primes $p$ considered in the proposition, both sides of the formulas
  are zero. The proposition is proved.
\end{proof}

\begin{prop}\label{prop:3-6}
$o(3,6)=2 \left
  (1-\left(\frac{-3}{p}\right )\right)^2$ for all $p\neq 2,3$. 
\end{prop}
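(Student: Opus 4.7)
The plan is to apply the three-step strategy of Section~\ref{sec:3.1} with $\uln=(3,6)$. First, if $p\equiv 1\pmod 3$ then $e(3)=2$, so there is no faithful left $\scrK_\uln$-module of $D$-dimension $2$, forcing $o(3,6)=0$, in agreement with the formula. So I may assume $p\equiv 2\pmod 3$ (whence $p\neq 2$ automatically), and the claim becomes $o(3,6)=8$.

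Next, I would locate the primes where $\scrA_\uln$ is non-maximal. By \eqref{eq:31} these are exactly $p$ and $\ell=2$. At $p$, Proposition~\ref{prop:p-lattices} gives $\abs{\scrL_p(\uln)}=[K_3:\Q][K_6:\Q]=4$. At $\ell=2$, I would apply Remark~\ref{rem:simplifications}(ii) together with Table~\ref{tab:V-ell} to reduce to classifying $A_{\uln,2}$-lattices in the free rank-one $K_{\uln,2}$-module $V'_2=\Q_4\times\Q_4$, using that $2$ is inert in $K_3=\Q(\sqrt{-3})$ so that $A_{3,2}$ is the ring of integers of $\Q_4$.

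Since $A_\uln$ is a commutative Bass order (by the Borevich--Faddeev criterion combined with \eqref{eq:27}), the parametrization \eqref{eq:48} reduces this rank-one problem to counting intermediate orders $A_{\uln,2}\subseteq B\subseteq O_{K_\uln,2}$. From \eqref{eq:35}, the conductor of $A_{\uln,2}$ in $O_{K_\uln,2}$ is $\grf=2O_{K_\uln,2}$, and $O_{K_\uln,2}/A_{\uln,2}\cong \F_4$ carries an $A_{\uln,2}$-action factoring through $A_{\uln,2}/\grf\cong \F_4$ by scalar multiplication. Consequently the only $A_{\uln,2}$-submodules of the quotient are $0$ and $\F_4$, yielding exactly two intermediate orders (namely $A_{\uln,2}$ and $O_{K_\uln,2}$), so $\abs{\scrL_2(\uln)}=2$. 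Altogether there are $4\cdot 2=8$ genera.

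Finally, for each genus the endomorphism ring $O_\Lambda\subseteq \scrE_\uln=K_3\times K_3$ is maximal at $p$ (by Proposition~\ref{prop:p-lattices}) and coincides with one of the two intermediate orders above at $\ell=2$, so globally $O_\Lambda$ is either $A_\uln$ or $O_{K_\uln}=A_3\times A_3$. Both have class number one by \eqref{eq:28} and the well-known equality $h(A_3)=1$, so by \eqref{eq:16} each genus contributes exactly $1$, giving $o(3,6)=8=2\left(1-\left(\frac{-3}{p}\right)\right)^2$. The principal obstacle is the local Bass-order analysis at $\ell=2$; once the $A_{\uln,2}$-module structure of $O_{K_\uln,2}/A_{\uln,2}\cong\F_4$ is correctly identified, the remaining steps are routine bookkeeping.
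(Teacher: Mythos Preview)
Your proof is correct and follows essentially the same approach as the paper: reduce to $p\equiv 2\pmod 3$, use Proposition~\ref{prop:p-lattices} at $p$, apply the Borevich--Faddeev classification \eqref{eq:48} at $\ell=2$ to count overorders of $A_{\uln,2}$, and conclude using $h(A_\uln)=h(O_{K_\uln})=1$ from \eqref{eq:28}. The paper is slightly more terse, simply citing \eqref{eq:35} to assert that $O_{K_{\uln,2}}$ is the unique proper overorder, whereas you spell out the quotient $O_{K_{\uln,2}}/A_{\uln,2}\cong\F_4$ explicitly. One minor slip: the parenthetical ``whence $p\neq 2$ automatically'' is false, since $p=2$ satisfies $p\equiv 2\pmod 3$; but this is harmless because $p\neq 2$ is already part of the hypothesis (and is needed for $\calO_2\simeq\Mat_2(\Z_2)$).
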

\begin{proof}
Assume that $p\neq 2, 3$.   Only the case $p\equiv 2\pmod{3}$ requires a proof.  For
  $\uln=(3,6)$, 
  $O_{K_{\uln, 2}}$ is the only order in $K_{\uln,2}$ properly
  containing $A_{\uln,2}$ by (\ref{eq:35}). So any
  $A_{\uln, 2}$-lattice $\Lambda_2'$ in $V_2'\simeq K_{\uln, 2}$ is isomorphic to
  $A_{\uln, 2}$ or $O_{K_{\uln,2}}$ by (\ref{eq:48}). Correspondingly,
  \begin{equation}
    \label{eq:32}
    \End_{A_{\uln, 2}}(\Lambda_2')=
    \begin{cases}
      A_{\uln, 2}\quad &\text{if } \Lambda_2'\simeq A_{\uln, 2},\\
     O_{K_{\uln,2}}\quad &\text{if } \Lambda_2'
     \simeq O_{K_{\uln, 2}},
    \end{cases}
  \end{equation}
and the same holds for $\End_{\scrA_{\uln, 2}}(\Lambda_2)$ by
Remark~\ref{rem:simplifications}(ii). 
  It follows from  Proposition~\ref{prop:p-lattices} that  
\[\End_{\scrA_\uln}(\Lambda)=
\begin{cases}
  A_\uln\quad &\text{if } \Lambda_2\simeq (A_{\uln, 2})^2, \\
O_{K_\uln}\quad &\text{if } \Lambda_2\simeq (O_{K_{\uln, 2}})^2,
\end{cases}
\]
for  any $\scrA_\uln$-lattice
  $\Lambda\subset V$. 
Recall that $h(A_\uln)=h(O_{K_\uln})=1$ by (\ref{eq:28}). Therefore,
when $\uln=(3,6)$, $p\equiv 2\pmod{3}$ and $p\neq 2$,  we have 
\[o(\uln)=\abs{\scrL_2(\uln)}\cdot\abs{\scrL_p(\uln)}=2\cdot 4=2 \left
  (1-\left(\frac{-3}{p}\right )\right)^2.\qedhere\]
\end{proof}

Now suppose that $\uln=(1,2)$.  Then $K_\uln=\Q\times \Q$, and
$A_\uln$ is the unique suborder of index 2 in
$O_{K_\uln}=\Z\times \Z$. To write down the formula for $o(1,2)$,
we define a few auxiliary orders.  Let 
$\bbO_1(1,2):=\calO \times \calO$,  a maximal order in
$\End_{\scrK_\uln}(V)= D \times D$.  Fix an isomorphism
$\calO_2\simeq \Mat_2(\Z_2)$, and thereupon an isomorphism
\[ \bbO_1(1,2)_2=(\calO\times\calO)\otimes \Z_2\simeq
\Mat_2(\Z_2\times\Z_2) =\Mat_2(O_{K_{\uln, 2}}). \]
Let $\bbO_8(1,2)$ and $\bbO_{16}(1,2)$ be the suborders of
$\bbO_1(1,2)$ of index $8$ and $16$ respectively such that
\begin{equation} \label{o12_def}
\begin{split}
  \bbO_{8}(1,2)_2 &=
\begin{pmatrix}
  A_{\uln, 2} & 2 O_{K_{\uln, 2}} \\  O_{K_{\uln,2}} & O_{K_{\uln, 2}}\\
\end{pmatrix},\qquad \mathbb{O}_{16}(1,2)_2=\Mat_2(A_{\uln,2}); \\  
\bbO_i(1,2)_{\ell'}&=\bbO_1(1,2)_{\ell'}\qquad \forall \text{ prime }
\ell'\neq 2 \text{ and } i=8, 16. 
\end{split}
\end{equation}

\begin{prop} \label{case_o12}
If $p=3$, then $o(1,2)=3$. For $p \neq 2,3$,  we have
\begin{equation}\label{eq:o12}
  \begin{split}
o(1,2) = &h(\bbO_1(1,2))+h(\bbO_8(1,2))+h(\bbO_{16}(1,2))\\
=& \frac{(p-1)^2}{9} + \frac{p+15}{18}\left(1-\left (
    \frac{-3}{p} \right)\right) + \frac{p+2}{6}\left(1-\left (
    \frac{-4}{p} \right)\right) \\  
 &+ \frac{1}{6}\left(1-\left ( \frac{-3}{p} \right
   )\right)\left(1-\left ( \frac{-4}{p} \right )\right).    
  \end{split}
\end{equation}
\end{prop}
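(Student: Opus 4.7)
\textbf{Proof plan for Proposition~\ref{case_o12}.}
I would follow the three-step framework of Section~\ref{sec:3.1}. Since $[O_{K_{(1,2)}}:A_{(1,2)}]=2$ and our assumptions impose $p\neq 2$, condition (II) guarantees that $\scrA_{(1,2),\ell}$ is maximal at every prime $\ell\notin\{2,p\}$. Hence the genera of $\scrA_{(1,2)}$-lattices in $V$ are parametrized by the local isomorphism classes at the two primes $\ell=p$ and $\ell=2$. Proposition~\ref{prop:p-lattices} immediately handles the $p$-adic part: one has $\abs{\scrL_p(1,2)}=[K_1:\Q][K_2:\Q]=1$, and any lattice has maximal $p$-adic endomorphism ring. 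So the computation reduces to classifying $\scrA_{(1,2),2}$-lattices in $V_2$ and determining their endomorphism rings.

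For the $2$-adic classification, I would invoke Remark~\ref{rem:simplifications}(ii) to replace $\scrA_{(1,2),2}$-lattices in $V_2$ by $A_{(1,2),2}$-lattices in the free rank-$2$ $K_{(1,2),2}$-module $V'_2$ (cf.~Table~\ref{tab:V-ell}). The description~\eqref{eq:33} shows that $O_{K_{(1,2),2}}/A_{(1,2),2}\cong\F_2$ is cyclic, so $A_{(1,2),2}$ is a Bass order by the Borevich--Faddeev characterization (iii) stated before \eqref{eq:46}, and the only intermediate order between $A_{(1,2),2}$ and $O_{K_{(1,2),2}}$ is $O_{K_{(1,2),2}}$ itself. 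Hence the only chains $A_{(1,2),2}\subseteq B_1\subseteq B_2\subseteq O_{K_{(1,2),2}}$ are the three using those two orders, and~\eqref{eq:48} yields precisely three isomorphism classes $B_1\oplus B_2$ of $A_{(1,2),2}$-lattices in $V'_2$. Computing $\End_{A_{(1,2),2}}(B_1\oplus B_2)$ case-by-case and transporting the answer back across Morita equivalence identifies the three endomorphism rings with the local components at $2$ of $\bbO_{16}(1,2)$, $\bbO_8(1,2)$, and $\bbO_1(1,2)$ defined in~\eqref{o12_def}. Combined with~\eqref{eq:16}, this gives the genus-sum
\[o(1,2)=h(\bbO_1(1,2))+h(\bbO_8(1,2))+h(\bbO_{16}(1,2)).\]

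The remaining task is the evaluation of the three class numbers, which is where the real work lies. The term $h(\bbO_1(1,2))=h(\calO)^2$ is obtained by squaring~\eqref{eq:29}, since $\bbO_1(1,2)=\calO\times\calO$. The main obstacle is the evaluation of $h(\bbO_8(1,2))$ and $h(\bbO_{16}(1,2))$, which I would defer to Section~\ref{sec:class-numb-ord}. The approach there will be to apply the Eichler mass formula to the orders $\bbO_8(1,2)$ and $\bbO_{16}(1,2)$, whose local components at $2$ are explicit non-maximal orders in $\Mat_2(\Q_2)\times\Mat_2(\Q_2)$, combined with a count of optimal embeddings of the cyclotomic orders $A_n$ ($n\in\{3,4,6\}$) into them to correct for unit-group torsion. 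Once these two class numbers are in hand, expanding via~\eqref{eq:29} and collecting terms produces the stated polynomial.

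Finally, the exceptional case $p=3$ needs a direct computation. The polynomial on the right-hand side of~\eqref{eq:o12} specializes to a non-integer at $p=3$, which signals that the Eichler mass formula picks up a nonstandard correction at $p=3$ (traceable to the extra torsion in $\calO^\times$ when $\Q(\sqrt{-3})$ fails to embed properly as an imaginary quadratic subfield of $D_{p,\infty}$). I would treat $p=3$ separately by computing $h(\bbO_1(1,2))$, $h(\bbO_8(1,2))$, and $h(\bbO_{16}(1,2))$ individually from $h(\calO)=1$ and a direct enumeration of locally principal right ideals of each order, obtaining $1+1+1=3$.
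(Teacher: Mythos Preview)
Your derivation of the genus-sum formula $o(1,2)=h(\bbO_1(1,2))+h(\bbO_8(1,2))+h(\bbO_{16}(1,2))$ matches the paper's proof step for step: same reduction to $\ell\in\{2,p\}$, same appeal to Proposition~\ref{prop:p-lattices} at $p$, same Bass-order classification at $2$ via chains~\eqref{eq:48}, same identification of the three endomorphism rings, and the same handling of $h(\bbO_1(1,2))=h(\calO)^2$.

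Where you diverge is in the anticipated method for $h(\bbO_8)$ and $h(\bbO_{16})$ in Section~\ref{sec:class-numb-ord}. You propose the Eichler mass formula together with optimal-embedding counts. The paper instead uses a fiber-counting argument over a larger order: for $\bbO_8$ it observes that $\wh\bbO_8^\times=\wh{(\calO^{(2)}\times\calO^{(2)})}^\times$, so Corollary~\ref{lem:eq-class-num} gives $h(\bbO_8)=h(\calO^{(2)})^2$ for free; for $\bbO_{16}$ it analyzes the fibers of $\Cl(\bbO_{16})\to\Cl(\bbO_1)$ via the double-coset space $C_{j_1}\backslash S_3/C_{j_2}$, reducing everything to the refined class numbers $h_1,h_2,h_3$ of~\eqref{eq:hn}. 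Your mass-formula route is viable in principle, but for $\bbO_{16}$ (which is not a product order) the unit groups $O_l(I)^\times$ are genuine subgroups of $C_{2j_1}\times C_{2j_2}$ cut out by a congruence, so the bookkeeping becomes the same double-coset count the paper does directly; the fiber approach is shorter and avoids setting up a mass formula in $D\times D$.

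One small correction on $p=3$: your diagnosis that $\Q(\sqrt{-3})$ ``fails to embed'' is backwards---it does embed, and indeed $\calO^\times/\{\pm 1\}\simeq S_3$ there. The breakdown of the closed formula comes from $\Lsymb{-3}{3}=0$ and the anomalously large unit group, exactly as the paper exploits in the separate $p=3$ clause of Proposition~\ref{prop:O16-calc}. Your conclusion $1+1+1=3$ is correct.
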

\begin{proof}
Throughout this proof, we assume that $p\neq 2$.   By Table~\ref{tab:V-ell}, $V_2'$ is a free $K_{\uln,2}$-module of
  rank $2$. According to (\ref{eq:48}), any $A_{\uln, 2}$-lattice $\Lambda_2'\subseteq V_2'$ is
  isomorphic to $A_{\uln, 2}^{j} \oplus (O_{K_{\uln, 2}})^{2-j}$ with $j=0,1,2$.
Correspondingly, the endomorphism ring  $\End_{A_{\uln,
    2}}(\Lambda_2')$ is isomorphic to 
\[\bbO_1(1,2)_2,\qquad \bbO_8(1,2)_2,\qquad \bbO_{16}(1,2)_2.\]
Since $\abs{\scrL_p(\uln)}=1$ by Proposition~\ref{prop:p-lattices},
there are three genera of $\scrA_\uln$-lattices in $V$. Each is
represented by a lattice with endomorphism ring $\bbO_i(1,2)$ for
$i\in \{1,8, 16\}$, respectively. It follows that 
\begin{equation}
  \label{eq:36}
o(1,2)=h(\bbO_1(1,2))+h(\bbO_8(1,2))+h(\bbO_{16}(1,2)). 
\end{equation}
The class number $h(\bbO_8(1,2))$ is given by
Proposition~\ref{prop:class-num-O8},  and $h(\bbO_{16}(1,2))$ is given
by Proposition~\ref{prop:O16-calc}. 
 Lastly, we have $h(\bbO_1(1,2))=h(\calO)^2$. 
The explicit formula
for $o(1,2)$ follows from (\ref{eq:36}). 
\end{proof}

Finally, we study the terms $o(2,2\ell)$ for $\ell\in \{2,3\}$.  We have
$[O_{K_\uln}:A_\uln]=\ell$, and $\End_{\scrK_\uln}(V)=D\times K_{2\ell}$ by
(\ref{eq:26}).  Let $\bbO_1(2,2\ell)$ be the maximal order
$\calO\times A_{2\ell}\subset \End_{\scrK_\uln}(V)$. Recall that
$p\neq \ell$ by our assumption, so we fix an isomorphism
$\calO_\ell\simeq \Mat_2(\Z_\ell)$. By an abuse of notation, we still write
$\grp_\ell$ for the unique prime ideal of $A_{2\ell,\ell}$
above $\ell$.  Let $\bbO_{\ell^2}(2,2\ell)$ be the suborder of index $\ell^2$ in
$\bbO_1(2,2\ell)$ such that
\begin{equation}
  \label{eq:37}
  \begin{aligned}
 \bbO_{\ell^2}(2,2\ell)_\ell&=\left\{\left (
  \begin{bmatrix}
    a_{11} & a_{12}\\
    a_{21} & a_{22}
  \end{bmatrix}, b\right)\in \bbO_1(2,2\ell)_\ell\middle \vert 
\begin{aligned}
  a_{21}\equiv 0 &\pmod{\ell}\\
a_{22} \equiv b &\pmod{\grp_\ell}
\end{aligned}
\right\};  \\
\bbO_{\ell^2}(2,2\ell)_{\ell'}&=\bbO_1(2,2\ell)_{\ell'}\qquad \forall \text{ prime }\ell'\neq \ell. 
  \end{aligned}
\end{equation}

\begin{prop} \label{prop:o(2,2s)} 
Suppose that $\ell\in\{2,3\}$ and $p$ satisfies the corresponding
condition for $\uln=(2,2\ell)$ in
Table~\ref{tab:index-An}. Then 
$o(2,2\ell)=2h(\bbO_1(2,2\ell)+2h(\bbO_{\ell^2}(2,2\ell))$. 
 More explicitly, 
\begin{align*}\label{eq:o13}
o(2,4) &=\left( \frac{p+3}{3}-\frac{1}{3}\left (
      \frac{-3}{p} \right)\right)\left(1-\left ( \frac{-4}{p}
    \right)\right) \quad \text{if } p\neq 2; 
  \\
o(2,6) &= \left(\frac{5p+18}{12}+\frac{1}{3}\left (
       \frac{-3}{p}\right)-\frac{1}{4}\left(\frac{-4}{p}\right)\right)\left(1-\left (
       \frac{-3}{p} \right)\right)           \quad \text{if } p\neq
         3. 
\end{align*}
\end{prop}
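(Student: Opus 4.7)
The plan is to execute the three-step strategy of Section~\ref{sec:3.1} for $\uln=(2,2\ell)$ with $\ell\in\{2,3\}$, and then to invoke the class-number computations of Section~\ref{sec:class-numb-ord} to derive the explicit closed forms. The global picture is already largely handled by Proposition~\ref{prop:p-lattices}: at the prime $p$ there are exactly $[K_{n_1}:\Q][K_{n_2}:\Q]=2$ isomorphism classes of $\scrA_{\uln,p}$-lattices in $V_p$, and all of them have the same maximal local endomorphism ring $\calO_p\times A_{2\ell,p}$. At any prime $\ell'\notin\{p,\ell\}$ the order $\scrA_\uln$ is already maximal. Hence the global genera are parametrised by $\scrL_p(\uln)\times\scrL_\ell(\uln)$, and the class-number contribution of each genus is determined entirely by the local isomorphism type at $\ell$.

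Next, I classify the $A_{\uln,\ell}$-lattices in the $K_{\uln,\ell}$-module $V'_\ell=(K_{n_1,\ell})^2\times K_{n_2,\ell}$ of Table~\ref{tab:V-ell}, which is legitimate by Remark~\ref{rem:simplifications}(ii). Since $V'_\ell$ is not free over $K_{\uln,\ell}$, the Borevich-Faddeev decomposition~(\ref{eq:48}) does not apply directly; instead I combine the Krull-Schmidt-Azumaya theorem with the Bass property of $A_{\uln,\ell}$ to decompose every lattice into indecomposables, each of which is isomorphic to an ideal of an order between $A_{\uln,\ell}$ and $O_{K_\uln,\ell}$. Because $[O_{K_\uln,\ell}:A_{\uln,\ell}]=\ell$ is prime, the only intermediate overorder is $O_{K_\uln,\ell}$ itself, and the local ideals of both orders are principal; consequently the list of indecomposable $A_{\uln,\ell}$-lattices reduces to $\Z_\ell$ and $A_{2\ell,\ell}$ (viewed as $A_{\uln,\ell}$-modules via the two projections of $O_{K_\uln,\ell}$), together with $A_{\uln,\ell}$ itself. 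Matching the $K_{n_1,\ell}$-rank $2$ on the first factor and the $K_{n_2,\ell}$-rank $1$ on the second factor of $V'_\ell$ yields exactly two solutions:
\[\Lambda'_\ell\simeq (\Z_\ell)^2\oplus A_{2\ell,\ell}\qquad\text{or}\qquad\Lambda'_\ell\simeq\Z_\ell\oplus A_{\uln,\ell}.\]

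The endomorphism rings are then read off directly. In the first case there are no cross-homomorphisms between the two summands (because the action of $A_{\uln,\ell}$ on them factors through the two distinct projections), so $\End_{A_{\uln,\ell}}(\Lambda'_\ell)\cong\Mat_2(\Z_\ell)\times A_{2\ell,\ell}$; via Remark~\ref{rem:simplifications}(ii) this coincides with $\bbO_1(2,2\ell)_\ell$. In the second case I compute $\Hom_{A_{\uln,\ell}}(A_{\uln,\ell},\Z_\ell)\cong\Z_\ell$ and $\Hom_{A_{\uln,\ell}}(\Z_\ell,A_{\uln,\ell})\cong\ell\Z_\ell$ from the fibre-product description of $A_{\uln,\ell}$, and after writing the resulting endomorphism ring as a block subring of $\Mat_2(\Q_\ell)\times K_{2\ell,\ell}$, I check that it matches $\bbO_{\ell^2}(2,2\ell)_\ell$ in~(\ref{eq:37}): the conditions $a_{21}\equiv 0\pmod\ell$ and $a_{22}\equiv b\pmod{\grp_\ell}$ are exactly the conductor constraints imposed by the fibre-product structure on $A_{\uln,\ell}$.

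Combining the two $p$-adic genera (each with the same maximal local endomorphism ring at $p$) with the two $\ell$-adic endomorphism-ring types gives four genera in total, and summing the corresponding class numbers yields $o(2,2\ell)=2h(\bbO_1(2,2\ell))+2h(\bbO_{\ell^2}(2,2\ell))$. For the explicit formulas I use $h(\bbO_1(2,2\ell))=h(\calO)\cdot h(A_{2\ell})=h(\calO)$ by~(\ref{eq:28}) together with~(\ref{eq:29}), while the values of $h(\bbO_{\ell^2}(2,2\ell))$ are established in Section~\ref{sec:class-numb-ord}; simplification using the assumed congruences $p\equiv 3\pmod 4$ (resp.~$p\equiv 2\pmod 3$), under which $1-\Lsymb{-4}{p}=2$ (resp.~$1-\Lsymb{-3}{p}=2$), recovers the stated closed forms, and for the other congruence classes of $p$ both sides vanish automatically. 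The main obstacle I expect is the endomorphism-ring identification at $\ell$ in the second lattice class, where tracking the conductor of $A_{\uln,\ell}$ inside $O_{K_\uln,\ell}$ and matching it with the explicit block form~(\ref{eq:37}) of $\bbO_{\ell^2}(2,2\ell)_\ell$ requires careful local bookkeeping; the lattice classification itself is routine once the Bass property is invoked, and the $p$-adic side is already fully handled by Proposition~\ref{prop:p-lattices}.
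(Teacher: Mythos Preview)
Your proposal is correct and follows essentially the same route as the paper: classify the $A_{\uln,\ell}$-lattices in $V'_\ell$ via Krull--Schmidt--Azumaya together with the Bass property (indecomposables are ideals), obtain the two isomorphism types $\Z_\ell^2\oplus A_{2\ell,\ell}$ and $\Z_\ell\oplus A_{\uln,\ell}$, identify their endomorphism rings with $\bbO_1(2,2\ell)_\ell$ and $\bbO_{\ell^2}(2,2\ell)_\ell$, and combine with the two $p$-adic classes from Proposition~\ref{prop:p-lattices} to get $o(2,2\ell)=2h(\bbO_1)+2h(\bbO_{\ell^2})$. One small correction: the reference to~(\ref{eq:28}) for $h(A_{2\ell})=1$ is misplaced (that equation concerns $A_{(3,4)}$ and $A_{(3,6)}$); the fact $h(A_4)=h(A_6)=1$ is simply the class number of $\Q(\sqrt{-1})$ and $\Q(\sqrt{-3})$.
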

\begin{proof}
  For the explicit formulas for $o(2,4)$ and $o(2,6)$, only the cases
  where $p$ satisfies the corresponding condition in
  Table~\ref{tab:index-An} are nontrivial and need to be proved.  

  Let $V_\ell'=\Q_\ell^2\times K_{2\ell,\ell}=\Q_\ell\oplus K_{\uln,\ell}$ be the module
  over $K_{\uln, \ell}=\Q_\ell\times K_{2\ell, \ell}$ in Table~\ref{tab:V-ell}. We claim
  that any $A_{\uln, \ell}$-lattice $\Lambda_\ell'\subset V_\ell'$ is
  isomorphic to $\Sigma_0:=\Z_\ell\oplus O_{K_{\uln,\ell}}$ or
  $\Sigma:=\Z_\ell\oplus A_{\uln, \ell}$. By the Krull-Schmidt-Azumaya Theorem \cite[Theorem~6.12]{curtis-reiner:1}, every $A_{\uln, \ell}$-lattice is uniquely expressible as a finite direct sum of indecomposable sublattices, up to isomorphism and order of occurrence of the summands.  Recall that any indecomposable
  lattice over a Bass order is isomorphic to an ideal
  \cite[\S7]{Bass-MathZ-1963}. Let $I_\ell$ be an 
  $A_{\uln, \ell}$-ideal. Then $I_\ell\otimes_{\Z_\ell}\Q_\ell$ is isomorphic to
  $\Q_\ell$, $K_{2\ell, \ell}$ or $K_{\uln, \ell}$. If $I_\ell\otimes_{\Z_\ell}\Q_\ell\simeq
      K_{\uln, \ell}$, then the result of Borevich and Faddeev
      \cite[\S37, p.789]{curtis-reiner:1} implies
      that $I_\ell$ is isomorphic to either $A_{\uln, \ell}$ or
      $O_{K_{\uln, \ell}}=\Z_\ell\oplus A_{2\ell, \ell}$. Clearly, $O_{K_{\uln, \ell}}$ is
      decomposable. Therefore, if $I_\ell$ is indecomposable, then we have 
  \begin{equation}
    \label{eq:49}
    I_\ell\simeq 
    \begin{cases}
      \Z_\ell \qquad &\text{if } I_\ell\otimes_{\Z_\ell}\Q_\ell\simeq \Q_\ell;\\
      A_{2\ell, \ell}\qquad &\text{if } I_\ell\otimes_{\Z_\ell}\Q_\ell\simeq K_{2\ell, \ell};\\
      A_{\uln, \ell}\qquad &\text{if } I_\ell\otimes_{\Z_\ell}\Q_\ell\simeq
      K_{\uln, \ell}. 
    \end{cases}
  \end{equation}
Write $\Lambda'_\ell=\Z_\ell^{t_1}\oplus  A_{2\ell, \ell}^{t_2}\oplus A_{\uln,
  \ell}^{t_3}$. Since $\Lambda'_\ell\otimes_{\Z_\ell}\Q_\ell\simeq
\Q_\ell^2\times K_{2\ell, \ell}$, we have $(t_1, t_2, t_3)=(2, 1, 0)$ or
$(1, 0, 1)$. The claim is verified.

  Direct calculation shows that
\[\End_{A_{\uln, \ell}}(\Lambda_\ell')=
\begin{cases}
  \bbO_1(2, 2\ell)_\ell  &\qquad \text{if } \Lambda_\ell'\simeq \Sigma_0;\\
  \bbO_{\ell^2}(2, 2\ell)_\ell  &\qquad \text{if } \Lambda_\ell'\simeq \Sigma.\\
\end{cases}
\] 
The classification at $\ell$ partitions the set of isomorphism
classes of $\scrA_\uln$-lattices $\Lambda\subset V$  into two subsets, 
according to the local isomorphism classes of $\Lambda_\ell$. Each subset consists
of two genera by Proposition~\ref{prop:p-lattices}. Taking into
account of the maximality of $\End_{\scrA_{\uln, p}}(\Lambda_p)$ for
every $\Lambda$, we have 
\begin{equation}\label{eq:o22s}
o(2, 2\ell)=2h(\bbO_1(2,2\ell))+2h(\bbO_{\ell^2}(2,2\ell)).
\end{equation}
The class numbers of $\bbO_4(2,4)$ and $\bbO_9(2,6)$ are calculated in
Proposition~\ref{prop:o144_o139}, and $h(\bbO_1(2,2\ell))=h(\calO)h(A_{2\ell})=h(\calO)$ for $\ell\in \{2,3\}$. 
The explicit formulas for $o(2,4)$ and $o(2,6)$ follow directly. 
\end{proof}

\begin{rem}
  When $\uln=(2,6)$, $A_\uln\simeq A_{(1,3)}=\Z[T]/(T^3-1)$ coincides
  with the group ring $\Z[C_3]$ for the cyclic group $C_3$ of order
  $3$. The classification of $A_{\uln,3}$-lattices is equivalent to
  that of $\Z_3$-representations of $C_3$. Similarly, $A_{(2,4)}$ is a
  quotient of $\Z[C_4]$.  Therefore, one may also apply the result of
  Heller and Reiner \cite{Heller-Reiner-1962} on indecomposable
  integral representations over cyclic groups of order $\wp^2$ ($\wp\in\bbN$ a
  prime) to obtain the claim in Proposition~\ref{prop:o(2,2s)}.
\end{rem}

\section{Class numbers of certain orders} 
\label{sec:class-numb-ord}

In this section, we compute the class numbers of the orders $\bbO_8(1,2)$, $\bbO_{16}(1,2)$, $\bbO_4(2,4)$, and $\bbO_9(2,6)$,  defined in
(\ref{o12_def}) and (\ref{eq:37}). Throughout this section, the prime
$p$ is assumed to satisfy the corresponding 
 condition in Table~\ref{tab:index-An} for $\uln=(1,2), (2,4), (2,6)$ respectively. We first work out $h(\bbO_4(2,4))$ and $h(\bbO_9(2,6))$ in Proposition~\ref{prop:o144_o139}, and then  $h(\O_8(1,2))$ in Proposition~\ref{prop:class-num-O8}, and lastly $h(\bbO_{16}(1,2))$ in Proposition~\ref{prop:O16-calc}. 

We recall some
properties of ideal classes in more general settings. 
Let $\calR\subset\calS$ be two $\Z$-orders in a finite dimensional
semisimple $\Q$-algebra $\calB$. There is a natural
\textit{surjective} map between the sets of locally principal right
ideal classes
\[\pi:\Cl(\calR)\to \Cl(\calS), \qquad [I]\mapsto [I\calS]. \] 
The surjectivity is best seen using the adelic language, where $\pi$
is given by 
\begin{equation}\label{eq:40}
  \pi: \calB^\times\backslash\wh \calB^\times/\wh \calR^\times \to
\calB^\times\backslash\wh \calB^\times/\wh \calS^\times, \qquad
\calB^\times x\wh 
\calR^\times \mapsto  \calB^\times x\wh\calS^\times,\quad \forall x\in \wh\calB^\times.     
\end{equation}

Let $J\subset \calB$ be a locally principal right $\calS$-ideal.  We
study the fiber $\pi^{-1}([J])$. Write $\wh J=x\wh \calS$ for some
$x\in \wh\calB^\times$, and set
$\calS_J:=O_l(J)=\calB\cap x\wh\calS x^{-1}$, the associated left
order of $J$. By (\ref{eq:40}), we have 
\begin{equation}
  \label{eq:1}
\pi^{-1}([J])= \pi^{-1}(\calB^\times x \wh \calS^\times)=\calB^\times \backslash
  (\calB^\times 
x\wh \calS^\times)/ \wh\calR^\times.
\end{equation}
 Multiplying $\calB^\times
x\wh \calS^\times$ from the left by $x^{-1}$ induces a 
bijection 
\[\calB^\times \backslash (\calB^\times
x\wh \calS^\times)/ \wh\calR^\times\simeq (x^{-1}\calB^\times x) \backslash (x^{-1}\calB^\times x\wh
  \calS^\times) / \wh\calR^\times, \] 
and the latter is in turn isomorphic to
$ (x^{-1}\calB^\times x\cap \wh \calS^\times) \backslash \wh
\calS^\times/ \wh\calR^\times$. Therefore, we  obtain a double
coset description of the fiber
\begin{equation}\label{eq:41}
  \pi^{-1}([J])\simeq (x^{-1}\calS_J^\times x)\backslash\wh
\calS^\times/ \wh\calR^\times.
\end{equation}

\begin{lem}\label{lem:fiber-class-map}
  Suppose that
  $\wh \calS^\times \subseteq \calN(\wh\calR)$, the normalizer of
  $\wh\calR$ in $\wh\calB^\times$. Then the suborder
  $\calR_J:=x\wh \calR x^{-1}\cap \calB$ of $\calS_J$ is independent of the choice
  of $x\in \wh\calB^\times$ for $J$, and 
\[\abs{\pi^{-1}([J])}=\frac{[\wh \calS^\times : \wh
  \calR^\times]}{[\calS_J^\times:\calR_J^\times]}.\] 
\end{lem}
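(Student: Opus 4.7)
The plan is to exploit the hypothesis $\wh\calS^\times\subseteq \calN(\wh\calR)$ in order to promote the coset space $\wh\calS^\times/\wh\calR^\times$ appearing in \eqref{eq:41} to an honest quotient \emph{group}, so that the fiber count reduces to a clean orbit–stabilizer computation in that group.

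First I would dispatch the well-definedness claim. If $x,x'\in \wh\calB^\times$ both represent $\wh J$, i.e. $\wh J=x\wh\calS = x'\wh\calS$, then $x'=xu$ for some $u\in \wh\calS^\times$. By hypothesis $u$ normalizes $\wh\calR$, so $x'\wh\calR(x')^{-1}=xu\wh\calR u^{-1}x^{-1}=x\wh\calR x^{-1}$, and intersecting with $\calB$ shows that $\calR_J$ does not depend on the choice of $x$. The inclusion $\wh\calR\subseteq\wh\calS$ gives $\calR_J\subseteq \calS_J$ automatically.

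For the index formula, the starting point is the bijection \eqref{eq:41}. Normality of $\wh\calR^\times$ in $\wh\calS^\times$ makes $\wh\calS^\times/\wh\calR^\times$ a finite group on which $H:=x^{-1}\calS_J^\times x$ acts by left translation; the orbit space is precisely the double coset set in \eqref{eq:41}, and each orbit has the same cardinality $|H\wh\calR^\times/\wh\calR^\times|$. Unwinding $\calS_J^\times = \calB^\times\cap x\wh\calS^\times x^{-1}$ gives $H=x^{-1}\calB^\times x\cap\wh\calS^\times$, so a direct intersection computation produces
\[
(x^{-1}\calS_J^\times x)\cap \wh\calR^\times \;=\; x^{-1}\calB^\times x\cap \wh\calR^\times \;=\; x^{-1}\calR_J^\times x,
\]
where the last equality uses $\calR_J^\times=\calB^\times\cap x\wh\calR^\times x^{-1}$ (which follows from the definition of $\calR_J$ since $\wh\calR$ is an order). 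The image of $H$ in $\wh\calS^\times/\wh\calR^\times$ therefore has order $[\calS_J^\times:\calR_J^\times]$, and the orbit count gives
\[
\lvert \pi^{-1}([J])\rvert \;=\; \frac{[\wh\calS^\times:\wh\calR^\times]}{[\calS_J^\times:\calR_J^\times]}.
\]

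The main obstacle is really just the bookkeeping in the intersection $(x^{-1}\calS_J^\times x)\cap \wh\calR^\times$; once the normality hypothesis is invoked correctly, every step is formal. Finiteness of the indices involved is guaranteed by $[\calS:\calR]<\infty$, which makes $\wh\calS/\wh\calR$ finite and hence $\wh\calS^\times/\wh\calR^\times$ finite as well. I do not anticipate any further subtleties beyond careful tracking of how $\calR_J$ and $\calS_J$ are built from $\wh\calR$ and $\wh\calS$ by conjugation and intersection with $\calB$.
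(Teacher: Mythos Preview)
Your proposal is correct and follows essentially the same route as the paper. The only cosmetic difference is that the paper first conjugates \eqref{eq:41} by $x$ to rewrite the fiber as $\calS_J^\times\backslash \wh\calS_J^\times/\wh\calR_J^\times$ before invoking normality, whereas you stay on the $\wh\calS^\times/\wh\calR^\times$ side and conjugate $\calS_J^\times$ back; the orbit--stabilizer computation and the key intersection identity $(x^{-1}\calS_J^\times x)\cap\wh\calR^\times = x^{-1}\calR_J^\times x$ are identical in content.
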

\begin{proof}
Suppose that $\wh J=x'\wh\calS$ for $x'\in
\wh\calB^\times$ as well. Then there exists $u\in
  \wh 
  \calS^\times$ such that $x'=xu$. Since $\wh\calS^\times\subseteq
  \calN(\wh \calR)$,  we have 
\[x'\wh \calR x'^{-1}\cap \calB=xu\wh \calR
  u^{-1}x^{-1}\cap 
\calB= x\wh \calR x^{-1}\cap \calB=\calR_J\subset \calS_J, \]
which proves the independence of $\calR_J$ of the choice of
$x$.  If $I$ is a locally principal right $\calR$-ideal such that
    $I\calS=J$, then $\calR_J=O_l(I)$, the associated left order of
    $I$. Conjugating by $x\in \wh\calB^\times$ on the right hand side
    of (\ref{eq:41}), we obtain 
    \begin{equation}
      \label{eq:42}
  \pi^{-1}([J])\simeq \calS_J^\times\backslash
(x\wh\calS^\times x^{-1})/ (x\wh\calR^\times
x^{-1})=\calS_J^\times\backslash \wh\calS_J^\times/\wh\calR_J^\times.
    \end{equation}


    The assumption $\wh\calS^\times\subseteq \calN(\wh\calR)$ also
    implies that $\wh \calR^\times\unlhd\wh \calS^\times$, and hence
    $\wh \calR_J^\times\unlhd\wh\calS_J^\times$ and
    $\calR_J^\times\unlhd \calS_J^\times$. The left action of
    $S_J^\times$ on the quotient group
    $\wh\calS_J^\times/ \wh\calR_J^\times$ factors through
    $\calS_J^\times /\calR_J^\times\subseteq \wh\calS_J^\times/\wh\calR_J^\times$, and its
    orbits are the right cosets of
    $\calS_J^\times/\calR_J^\times$ in
    $\wh\calS_J^\times/ \wh\calR_J^\times$. Thus
\[\abs{\pi^{-1}([J])}=[\wh \calS_J^\times :
\wh \calR_J^\times]/[\calS_J^\times:\calR_J^\times]=[\wh \calS^\times :
\wh \calR^\times]/[\calS_J^\times:\calR_J^\times].\qedhere\]
\end{proof}

\begin{rem}  The condition   $\wh \calS^\times \subseteq \calN(\wh\calR)$ implies
that $\wh\calR^\times \unlhd \wh\calS^\times$. However, the converse
does not hold in general. It is enough to provide a counterexample
locally at a prime $\ell$, say, $\ell=2$. Let $\calS_2:=\Mat_2(\Z_2)$, and $\calR_2=\begin{pmatrix}
    \Z_2 & 2\Z_2\\ 2\Z_2 & \Z_2
  \end{pmatrix}$, an Eichler order of level $4$ in $\calS_2$. Then \[\calR_2^\times=\left\{x\in
  \Mat_2(\Z_2)\,\middle\vert\, x\equiv
  \begin{pmatrix}
    1 & 0 \\ 0 & 1
  \end{pmatrix}
\pmod{2\calS_2}\right\}\unlhd \calS_2^\times=\GL_2(\Z_2).\] 
On the other hand, let $u=
\begin{pmatrix}
  1 & 1 \\ 0 & 1
\end{pmatrix}\in \calS_2^\times$, and $y=
\begin{pmatrix}
  1 & 0 \\ 0 & 0
\end{pmatrix}\in \calR_2$. Then 
\[uyu^{-1}=\begin{pmatrix}
  1 & 1 \\ 0 & 1
\end{pmatrix}\begin{pmatrix}
  1 & 0 \\ 0 & 0
\end{pmatrix}\begin{pmatrix}
  1 & -1 \\ 0 & 1
\end{pmatrix}=\begin{pmatrix}
  1 & -1 \\ 0 & 0
\end{pmatrix}\not\in \calR_2.\]
\end{rem}

\begin{cor}\label{lem:eq-class-num}
  Keep the notation and assumption of
  Lemma~\ref{lem:fiber-class-map}. If the natural homomorphism 
  $\calS_J^\times\to \wh \calS_J^\times/\wh\calR_J^\times$ is
  surjective for each ideal class $[J]\in \Cl(\calS)$, then $\pi$ is bijective.
\end{cor}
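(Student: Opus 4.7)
The plan is to derive the corollary directly from the fiber description established in the proof of Lemma~\ref{lem:fiber-class-map}, so only a short orbit-theoretic observation is needed. Since the surjectivity of $\pi$ is already recorded right after (\ref{eq:40}), it suffices to show that each fiber $\pi^{-1}([J])$ is a singleton for every $[J]\in \Cl(\calS)$.

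First I would recall from (\ref{eq:42}) the canonical bijection
\[
\pi^{-1}([J])\;\simeq\;\calS_J^\times\backslash\wh\calS_J^\times/\wh\calR_J^\times,
\]
which was derived under the running hypothesis $\wh\calS^\times\subseteq \calN(\wh\calR)$. As already observed in that proof, this hypothesis forces $\wh\calR_J^\times\unlhd\wh\calS_J^\times$, so $\wh\calS_J^\times/\wh\calR_J^\times$ is a genuine group, on which $\calS_J^\times$ acts by left translation via the natural inclusion $\calS_J^\times\hookrightarrow\wh\calS_J^\times$ composed with the projection.

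Next I would invoke the hypothesis: if $\calS_J^\times\to \wh\calS_J^\times/\wh\calR_J^\times$ is surjective, then every class in the quotient group is of the form $s\wh\calR_J^\times$ for some $s\in\calS_J^\times$, so the left $\calS_J^\times$-action on $\wh\calS_J^\times/\wh\calR_J^\times$ is transitive. Equivalently, the double coset space $\calS_J^\times\backslash\wh\calS_J^\times/\wh\calR_J^\times$ reduces to a single element, and hence $|\pi^{-1}([J])|=1$. Combined with the surjectivity of $\pi$, this gives bijectivity.

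There is no serious obstacle here: the corollary is essentially a reformulation of Lemma~\ref{lem:fiber-class-map} in the special case where the numerator and denominator of the index formula $[\wh\calS^\times:\wh\calR^\times]/[\calS_J^\times:\calR_J^\times]$ agree. The only minor point to verify carefully is that the stated surjectivity really does correspond to transitivity of the $\calS_J^\times$-action on $\wh\calS_J^\times/\wh\calR_J^\times$, which is immediate once one notes that $\wh\calR_J^\times$ is normal.
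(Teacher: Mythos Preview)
Your proposal is correct and follows essentially the same approach as the paper: both reduce to the fiber description (\ref{eq:42}) and observe that the surjectivity hypothesis makes the $\calS_J^\times$-action on $\wh\calS_J^\times/\wh\calR_J^\times$ transitive (equivalently, that the inclusion $\calS_J^\times/\calR_J^\times\hookrightarrow \wh\calS_J^\times/\wh\calR_J^\times$ is an isomorphism), so each fiber is a singleton. The phrasing differs only cosmetically.
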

\begin{proof}It is enough to show that $\pi$ is injective. The
  surjectivity of $\calS_J^\times\to \wh 
  \calS_J^\times/\wh\calR_J^\times$ implies that the monomorphism
  $\calS_J^\times/\calR_J^\times\hookrightarrow \wh 
  \calS_J^\times/\wh\calR_J^\times$ is an isomorphism, and hence
  $\abs{\pi^{-1}([J])}=[\wh\calS_J^\times/\wh\calR_J^\times:\calS_J^\times/
  \calR_J^\times]=1$.   
\end{proof}

Let $D=D_{p, \infty}$ be the unique quaternion algebra over $\Q$
ramified exactly at $p$ and $\infty$, and $\calO\subset D$ a maximal order in
$D$.  Let $\ell\in \{2,3\}$,
and assume that $p\neq \ell$. Fix an isomorphism
$\calO\otimes_\Z \Z_\ell \simeq \Mat_2(\Z_\ell)$. We write $\calO^{(\ell)}$ for the Eichler order of
level $\ell$ in $\calO$ such that $\calO^{(\ell)}\otimes \Z_{\ell'}=\calO\otimes
\Z_{\ell'}$ for every prime $\ell'\neq \ell$, and 
\[\calO^{(\ell)}\otimes\Z_\ell=
  \begin{bmatrix}
    \Z_\ell & \Z_\ell\\
   \ell\Z_\ell & \Z_\ell
  \end{bmatrix}. 
\]
 The formula for $h(\calO^{(\ell)})$ is given in \cite[Theorem 16]{Pizer-1976}:
 \begin{equation} \label{class_number_Eichler}
\begin{split}
h(\calO^{(\ell)}) =& \frac{(p-1)(\ell+1)}{12} + \frac{1}{3}\left(
  1-\left(\frac{-3}{p} \right)  \right) \left( 1 + \left(\frac{-3}{\ell}
  \right)\right) \\  &+ \frac{1}{4}\left( 1-\left(\frac{-4}{p} \right)
\right) \left( 1+ \left(\frac{-4}{\ell} \right)\right), \quad \text{for
   }\ell\in
\{2,3\} \text{ and } p\neq \ell.
\end{split}
 \end{equation}
Here for $m\in \{3,4\}$, the symbol $\Lsymb{-m}{2}$ should be
understood as the Jacobi symbol $\Lsymb{\Z[\zeta_m]}{2}$
(\cite[Chapter~III, \S5, p.~94]{vigneras}), where $\zeta_m$ denotes a
primitive $m$-th root of unity.   

 \begin{prop} \label{prop:o144_o139} Suppose that $\ell\in \{2,3\}$ and
   $p\neq \ell$. 
   Let $\bbO_{\ell^2}(2,2\ell)$ be the order defined in (\ref{eq:37}). Then
  \begin{itemize}
 \item[(a)] $ h(\mathbb{O}_4(2,4))= \frac{1}{4}\left( p-\left(
       \frac{-4}{p}\right)\right)$ if $p\neq 2$;
 \item[(b)] $h(\mathbb{O}_9(2,6))= \frac{1}{3}\left( p-\left(
       \frac{-3}{p}\right) \right)$ if $p\neq 3$. 
 \end{itemize}
 \end{prop}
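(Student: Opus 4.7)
The plan is to compute both class numbers uniformly by comparing $\calR := \bbO_{\ell^2}(2, 2\ell)$ with the intermediate order $\calS := \calO^{(\ell)} \times A_{2\ell}$, which sits between $\calR$ and $\bbO_1(2, 2\ell)$. The key structural observation is that $\calR_\ell$ is a fiber product: since $M \in \calO^{(\ell)}_\ell = \bigl(\begin{smallmatrix} \Z_\ell & \Z_\ell \\ \ell\Z_\ell & \Z_\ell \end{smallmatrix}\bigr)$ is upper triangular modulo $\ell$, the assignment $M \mapsto \bar m_{22}$ defines a \emph{ring} homomorphism $\pi_1 \colon \calO^{(\ell)}_\ell \twoheadrightarrow \F_\ell$, and $b \mapsto \bar b$ defines a ring homomorphism $\pi_2 \colon A_{2\ell, \ell} \twoheadrightarrow \F_\ell$. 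Then $\calR_\ell = \{(M, b) \in \calS_\ell : \pi_1(M) = \pi_2(b)\}$ is the equalizer. At primes $\ell' \neq \ell$, $\calR$ and $\calS$ coincide.

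Next, I would verify that $\wh\calS^\times \subseteq \calN(\wh\calR)$: since $\F_\ell$ is commutative, conjugation by $(U, v) \in \calS_\ell^\times$ preserves the value $\pi_i$ on any element, hence preserves the equalizer condition defining $\calR_\ell$. A straightforward computation then yields an isomorphism of groups
\[
\chi \colon \wh\calS^\times/\wh\calR^\times \xrightarrow{\sim} \F_\ell^\times, \qquad (U, v) \longmapsto \bar u_{22}\, \bar v^{-1},
\]
using that each of $\pi_1, \pi_2$ is already surjective on units at $\ell$.

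With this in place, I would apply Corollary~\ref{lem:eq-class-num} to the surjection $\pi \colon \Cl(\calR) \to \Cl(\calS)$. It suffices to show that for every ideal class $[J] \in \Cl(\calS)$, the composite $\calS_J^\times \to \wh\calS_J^\times/\wh\calR_J^\times \cong \F_\ell^\times$ is surjective. Since $\calS_J = O_l(J_1) \times A_{2\ell}$ (where $O_l(J_1)$ is another Eichler order of level $\ell$ in $D$), the image contains $\chi(\{1\} \times A_{2\ell}^\times)$. For $\ell = 2$ this is automatic, as $\F_2^\times$ is trivial. For $\ell = 3$ one checks that $\Phi_6(T) = T^2 - T + 1 \equiv (T+1)^2 \pmod{3}$, so $\zeta_6 \in A_6^\times$ reduces to $-1$ modulo $\grp_3$; hence $\chi(1, \zeta_6) = -1$ hits the nontrivial element of $\F_3^\times$.

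Thus $\pi$ is bijective in both cases, giving $h(\calR) = h(\calS) = h(\calO^{(\ell)}) \cdot h(A_{2\ell}) = h(\calO^{(\ell)})$ since $h(A_4) = h(A_6) = 1$. Plugging into formula~\eqref{class_number_Eichler} with the Jacobi-type values $\bigl(\tfrac{-3}{2}\bigr) = -1$, $\bigl(\tfrac{-4}{2}\bigr) = 0$, $\bigl(\tfrac{-3}{3}\bigr) = 0$, $\bigl(\tfrac{-4}{3}\bigr) = -1$ reduces the Eichler class number to $(p - \bigl(\tfrac{-4}{p}\bigr))/4$ and $(p - \bigl(\tfrac{-3}{p}\bigr))/3$ respectively, which are the asserted formulas. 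The main obstacle, and the place where one must be careful, is verifying the normalization condition and identifying $\wh\calS^\times/\wh\calR^\times$; once the fiber-product description of $\calR_\ell$ is established, however, the surjectivity of $\calS_J^\times \to \F_\ell^\times$ is essentially forced by the smallness of the target.
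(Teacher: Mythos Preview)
Your proposal is correct and follows essentially the same route as the paper: introduce the intermediate order $\calS=\calO^{(\ell)}\times A_{2\ell}$, verify $\wh\calS^\times\subseteq\calN(\wh\calR)$ with quotient $\F_\ell^\times$, and apply Corollary~\ref{lem:eq-class-num} using that $A_{2\ell}^\times$ already surjects onto $\F_\ell^\times$. The paper phrases the last step via the center $Z(\calS)=\Z\times A_{2\ell}$ (which is automatically stable when passing to $\calS_J$), whereas you phrase it via the second factor of $\calS_J=O_l(J_1)\times A_{2\ell}$; these are the same observation, and your fiber-product description of $\calR_\ell$ is a clean way to see the normalization condition.
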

\begin{proof}
  For simplicity, we set $\bbO_{\ell^2}=\bbO_{\ell^2}(2, 2\ell)$, and define
  $\bbO_\ell:=\calO^{(\ell)}\times A_{2\ell}$, which contains $\bbO_{\ell^2}$ and
  is a suborder of index $\ell$ in $\bbO_1(2,2\ell)=\calO\times
  A_{2\ell}$.
  Recall that $\grp_\ell$ denotes the unique ramified prime in
  $A_{2\ell}$. We have $A_{2\ell}/\grp_\ell=\F_\ell$, and the
  canonical map
  $A_{2\ell}^\times \to
  (A_{2\ell}/\grp_\ell)^\times=\F_\ell^\times$
  is surjective.  

  It is straightforward to check that $\wh\bbO_\ell^\times\subseteq
  \calN(\wh \bbO_{\ell^2})$,  and
  $\wh\bbO_{\ell}^\times/\wh \bbO_{\ell^2}^\times\cong\F_\ell^\times$. Let
  $Z(\bbO_\ell)$ be the center of $\bbO_\ell$. Then $Z(\bbO_\ell)=\Z\times
  A_{2\ell}$, and its unit group $Z(\bbO_\ell)^\times=\{\pm 1\}\times A_{2\ell}^\times$ maps surjectively onto
  $\wh\bbO_\ell^\times/\wh\bbO_{\ell^2}^\times$.  Since
  $Z(\bbO_\ell)=Z(O_l(J))$ for every locally principal right ideal $J$ of $\bbO_\ell$, the assumptions of
  Corollary~\ref{lem:eq-class-num} are satisfied. Therefore,
\begin{equation*}
  \label{eq:2}
  h(\bbO_{\ell^2})=h(\bbO_\ell)=h(\calO^{(\ell)})h(A_{2\ell})=h(\calO^{(\ell)}),
  \qquad  \text{for } \ell=2,3.  
\end{equation*}
 Applying formula (\ref{class_number_Eichler}),  we obtain
\begin{align*}
  h(\mathbb{O}_4(2,4))& = h(\calO^{(2)})=\frac{1}{4}\left(
  p-\left( \frac{-4}{p}\right)\right)\qquad \text{if } p\neq 2;\\
  h(\mathbb{O}_9(2,6))& = h(\calO^{(3)})=\frac{1}{3}\left(
  p-\left( \frac{-3}{p}\right) \right)\qquad \text{if } p\neq 3. \qedhere
\end{align*}
\end{proof}


Next, we assume that $p\neq 2$ and  calculate the class numbers of the orders
$\mathbb{O}_8(1,2)$ and $ \mathbb{O}_{16}(1,2)$ defined in
(\ref{o12_def}).   For simplicity, let
$\mathbb{O}_i=\mathbb{O}_i(1, 2)$ for $i\in \{1, 8, 16\}$.
\begin{prop}\label{prop:class-num-O8}
Suppose that $p\neq 2$. Then 
\[  h(\mathbb{O}_8(1,2))=\frac{1}{16}\left( p-\left(
  \frac{-4}{p}\right)\right)^2.  \]
\end{prop}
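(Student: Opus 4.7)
The plan is to sandwich $\bbO_8 := \bbO_8(1,2)$ between $\bbO_1(1,2) = \calO\times\calO$ and an intermediate order $\calS$ chosen so that $\bbO_8 \subset \calS$ and the two have identical adelic unit groups, $\wh\bbO_8^\times = \wh\calS^\times$. Once this identity is in hand, the standard adelic description $\Cl(\calR) = (D\times D)^\times \backslash \wh{(D\times D)}^\times / \wh\calR^\times$ depends only on $\wh\calR^\times$, and will immediately force $\Cl(\bbO_8) = \Cl(\calS)$, reducing the computation of $h(\bbO_8)$ to that of $h(\calS)$.

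Concretely, I would define $\calS$ by $\calS_\ell = \bbO_1(1,2)_\ell$ for every prime $\ell\neq 2$ and
\[\calS_2 = \begin{pmatrix} T & 2T\\ T & T\end{pmatrix} \subset \Mat_2(T) = \bbO_1(1,2)_2,\]
where $T := O_{K_{\uln,2}} = \Z_2\times\Z_2$. Under the canonical decomposition $\Mat_2(\Z_2\times\Z_2) \cong \Mat_2(\Z_2)\times\Mat_2(\Z_2)$, the local order $\calS_2$ splits as a product of two Eichler orders of level $2$, each conjugate to $\calO^{(2)}\otimes\Z_2$. Hence $\calS$ lies in the same genus as $\calO^{(2)}\times\calO^{(2)}$, and so $h(\calS) = h(\calO^{(2)})^2$, readily computed from (\ref{class_number_Eichler}).

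The crux is then the unit identity $\wh\bbO_8^\times = \wh\calS^\times$, a purely local question at $\ell = 2$ since the orders agree outside $2$. They differ only in the top-left entry: $A_{\uln,2} = \{(a,b)\in T : a\equiv b\pmod 2\}$ versus $T$ itself. The key observation is that $A_{\uln,2}^\times = T^\times = (\Z_2^\times)^2$, because the congruence $a\equiv b\pmod 2$ is automatic whenever both $a$ and $b$ are odd. For any $\bigl(\begin{smallmatrix} s & 2t\\ u & v\end{smallmatrix}\bigr)\in\calS_2$, invertibility forces $\det = sv - 2tu \in T^\times$, hence $s,v\in T^\times = A_{\uln,2}^\times \subset A_{\uln,2}$, and so the matrix already lies in $\bbO_{8,2}$. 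This shows $\calS_2^\times = \bbO_{8,2}^\times$, and combined with trivial agreement at other primes yields $\wh\bbO_8^\times = \wh\calS^\times$ (an instance of Corollary~\ref{lem:eq-class-num} with trivial fibers). Putting everything together,
\[h(\bbO_8(1,2)) = h(\calS) = h(\calO^{(2)})^2 = \frac{1}{16}\left(p - \left(\frac{-4}{p}\right)\right)^2.\]
The main obstacle I anticipate is spotting the correct bridge order $\calS$; the rest rests on the small-residue-field identity $A_{\uln,2}^\times = T^\times$ (the conductor of $A_{\uln,2}$ in $T$ consists entirely of non-units), without which a direct index count would strictly overcount the fibers of $\Cl(\bbO_8) \to \Cl(\calS)$.
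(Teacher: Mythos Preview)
Your proof is correct and takes essentially the same approach as the paper: your bridge order $\calS$ is exactly the paper's $\scrO_4 = \calO^{(2)}\times\calO^{(2)}$ (with the Eichler order presented as $\bigl(\begin{smallmatrix}\Z_2 & 2\Z_2\\ \Z_2 & \Z_2\end{smallmatrix}\bigr)$ at~$2$), and the crucial identity $\wh\bbO_8^\times = \wh\calS^\times$ is the same unit equality the paper asserts. You in fact supply more detail than the paper does, spelling out why $A_{\uln,2}^\times = T^\times$ forces $\calS_2^\times \subseteq \bbO_{8,2}^\times$, whereas the paper simply writes ``One checks that $\widehat{\mathbb{O}}_8^\times = \widehat{\scrO}_4^\times$.''
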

\begin{proof}
By an
abuse of notation, we still write $\calO^{(2)}$ for the Eichler
order of $\calO$ of level 2 such that $\calO^{(2)}\otimes\Z_2=
  \begin{bmatrix}
    \Z_2 & 2\Z_2\\
   \Z_2 & \Z_2
 \end{bmatrix}$
 and $\calO^{(2)}\otimes \Z_{\ell'}=\calO\otimes \Z_{\ell'}$ for all primes
 $\ell'\neq 2$.   Put $\scrO_4:=\calO^{(2)} \times \calO^{(2)}$, which
 is a suborder of $\mathbb{O}_1$ of index 4 containing
 $\mathbb{O}_8$. One checks that $\wh\scrO_4^\times\subseteq \calN(\wh
 \bbO_8)$, and  $\widehat{\mathbb{O}}_8^\times =\wh\scrO_4^\times$, so
 the assumptions of Corollary~\ref{lem:eq-class-num} are automatically
 satisfied. We have
\begin{equation}
  \label{case_o12_8}
  h(\mathbb{O}_8(1,2))=h(\calO^{(2)}\times
  \calO^{(2)})=h(\calO^{(2)})^2=\frac{1}{16}\left( p-\left(
  \frac{-4}{p}\right)\right)^2.  \qedhere
\end{equation}
\end{proof}

To calculate the class number of $\mathbb{O}_{16}$, we first note that
$2\mathbb{O}_1\subset \mathbb{O}_{16}$, and the quotient ring
$\mathbb{O}_{16}/2\mathbb{O}_1\cong 
\Mat_2(\F_2)$ embeds diagonally into $\mathbb{O}_1/2\mathbb{O}_1\cong
\Mat_2(\F_2)^2$. In this case, $\widehat{\mathbb{O}}_{16}^\times$ is \textit{not}
normal in $\widehat{\mathbb{O}}_1^\times$, so $\wh\bbO_1^\times
\not\subseteq \calN(\wh\bbO_{16})$. This prevents us from applying
Lemma~\ref{lem:fiber-class-map} or Corollary~\ref{lem:eq-class-num} to the current
situation. 

We consider the natural surjective map
$\pi:\Cl(\bbO_{16})\to \Cl(\bbO_1)$ and work out explicitly the
cardinality of each fiber.  If $[J]\in \Cl(\bbO_1)$ is a
right ideal class of $\bbO_1$ with $\widehat{J} =x\wh\bbO_1$ for an
element $x\in (\wh D^\times)^2$, then by (\ref{eq:41}) one has a
bijection
\begin{equation}
  \label{eq:fiber}
  \pi^{-1}([J])\simeq x^{-1} \bbO_J^\times x \backslash \wh
  \bbO_1^\times/\wh \bbO_{16}^\times, \quad\text{where}\quad \bbO_J=O_l(J)=D^2\cap x \wh
  \bbO_1 
  x^{-1}.     
\end{equation}
If $p\neq 2, 3$, then $\bbO_J^\times \simeq C_{2 j_1} \times C_{2 j_2}$ for some
$1\le j_1,j_2\le 3$. Here $C_n$
denotes a cyclic group of order $n$. Given an arbitrary set $X$,
we write $\Delta(X)$ for the diagonal of $X^2$.  

\begin{lem} \label{sec:o16-double-coset-fiber}
(1) Let $[J]\in \Cl(\bbO_1)$ be a right ideal class of $\bbO_1$. If $\bbO_J^\times \simeq
    C_{2j_1}\times C_{2j_2}$, where  $1\le j_1,j_2\le 3$, then there
    is a bijection
    $\pi^{-1}([J])\simeq C_{j_1}\backslash S_3 /C_{j_2}$, where $S_n$
    denotes the symmetric group on $n$ letters.

(2) Let $c_{j_1,j_2}:=\abs{C_{j_1}\backslash
    S_3 /C_{j_2}}$ for $1\leq j_1,
  j_2\leq 3$. Then the values of $c_{j_1, j_2}$ are listed in the
  following table:
\begin{center}
\begin{tabular}{|c|c|c|c|}  \hline
$c_{j_1,j_2}$  & $1$  & $2$  & $3$ \\ \hline
$1$ & $6$ & $3$ & $2$  \\ \hline
$2$ & $3$ & $2$ & $1$  \\ \hline
$3$ & $2$ & $1$ & $2$  \\ \hline 
\end{tabular} 
\end{center} 
\end{lem}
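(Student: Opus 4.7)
For part (1), I would first reduce the fiber formula \eqref{eq:fiber} to a local problem at the prime $2$. Since $\bbO_1$ and $\bbO_{16}$ agree locally at every prime $\ell \neq 2$, the quotient $\wh\bbO_1^\times/\wh\bbO_{16}^\times$ is concentrated at $2$. Writing $\bbO_{i,2} := \bbO_i \otimes_\Z \Z_2$ and fixing an isomorphism $\calO_2 \simeq \Mat_2(\Z_2)$, one identifies $\bbO_{1,2} \simeq \Mat_2(\Z_2)^2$ with $\bbO_{16,2} = \Mat_2(A_{\uln,2})$ sitting inside as the mod-$2$ diagonal. Because $1 + 2\bbO_{1,2} \subset \bbO_{16,2}^\times$, reduction modulo $2$ induces isomorphisms
\[\bbO_{1,2}^\times/\bbO_{16,2}^\times \;\xrightarrow{\sim}\; \GL_2(\F_2)^2/\Delta\GL_2(\F_2) \;\xrightarrow[\;(a,b)\mapsto ab^{-1}\;]{\sim}\; \GL_2(\F_2) \simeq S_3,\]
under which left multiplication by $(u_1,u_2) \in \bbO_{1,2}^\times$ becomes the two-sided action $g \mapsto \bar u_1 \, g \, \bar u_2^{-1}$ on $S_3$.

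Next, since the ambient algebra $D \times D$ is a product, so is the left order $\bbO_J = O_l(J)$, say $\bbO_J = \calR_1 \times \calR_2$ with $\calR_i^\times \simeq C_{2j_i}$. For each $i$, pick a generator $\zeta_i$ of $\calR_i^\times$; the image $\bar U_i$ of $\calR_i^\times$ in $\GL_2(\F_2)$ is determined up to conjugacy (all embeddings $\Z_2[\zeta_i] \hookrightarrow \Mat_2(\Z_2)$ being conjugate by Skolem--Noether) and is controlled by the factorization of the minimal polynomial of $\zeta_i$ modulo $2$. For $j_i = 1$ the image is trivial; for $j_i = 2$ the polynomial $T^2 + 1$ reduces to $(T+1)^2 \bmod 2$, so $\bar\zeta_i$ is unipotent of order exactly $2$; for $j_i = 3$ the polynomial $\Phi_3(T)$ stays irreducible over $\F_2$, giving $\bar\zeta_i$ of order $3$. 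Thus $\bar U_i$ is a cyclic subgroup of order $j_i$ in $S_3$; since any two such subgroups are conjugate and conjugation does not affect the cardinality of a double coset space, combining with the fiber formula yields the bijection $\pi^{-1}([J]) \simeq C_{j_1} \backslash S_3/C_{j_2}$.

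Part (2) is then a direct enumeration of double cosets in $S_3$. The entries with $j_i = 1$ reduce to the ordinary coset counts $6/j_{3-i}$. For any row or column involving $j = 3$, the normality of $A_3$ in $S_3$ makes the double coset set a quotient of $S_3/A_3 \simeq C_2$, yielding $c_{3,3} = 2$ and $c_{3,2} = c_{2,3} = 1$. The remaining value $c_{2,2} = 2$ follows from listing the two double cosets $C_2 \cdot e \cdot C_2 = \{e, \tau\}$ and $C_2 \cdot \sigma \cdot C_2 = \{\sigma, \sigma\tau, \tau\sigma, \tau\sigma\tau\}$ for a chosen transposition $\tau$ and $3$-cycle $\sigma$.

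The main technical point I anticipate is the $j_i = 2$ case, where one must verify $\bar\zeta_i \neq 1$ in $\GL_2(\F_2)$. This follows from a brief lifting argument: $\zeta_i \equiv 1 \pmod{2}$ in $\Mat_2(\Z_2)$ would give $\zeta_i = 1 + 2N$ for some $N$, but then $\zeta_i^2 = -1$ forces $4(N + N^2) = -2I$, which is impossible $2$-adically. Once this is pinned down, the remainder of the argument is routine bookkeeping.
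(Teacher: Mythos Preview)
Your proposal is correct and follows essentially the same approach as the paper: reduce the fiber \eqref{eq:fiber} modulo $1+2\wh\bbO_1$ to $(\GL_2(\F_2)\times\GL_2(\F_2))/\Delta(\GL_2(\F_2))\simeq S_3$, show that each factor $C_{2j_i}$ of $\bbO_J^\times$ maps onto a cyclic subgroup of order $j_i$ in $S_3$, and then count double cosets. The only minor variations are in presentation: where the paper invokes Serre's lemma (a root of unity $\equiv 1\pmod 2$ is $\pm 1$) to get the embedding $C_{2j}/C_2\hookrightarrow\GL_2(\F_2)$ uniformly, you treat each $j_i$ by a direct case analysis of the minimal polynomial modulo $2$ (note that for $j_i=3$ the generator has minimal polynomial $\Phi_6$, though $\Phi_6\equiv\Phi_3\pmod 2$ so the conclusion is unaffected); and where the paper computes $c_{2,2}=2$ via the Bruhat decomposition of $\GL_2(\F_2)$, you simply enumerate the two double cosets.
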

\begin{proof}
  (1) We may regard $C_{2j_1}\times C_{2j_2}=x^{-1}\bbO_J^\times x$ as
  a subgroup of $\wh \bbO_1^\times$. As
  $1+2 \wh \bbO_1 \subset \wh \bbO_{16}^\times$, modulo this subgroup,
  one has
  $\wh \bbO_1^\times/\wh \bbO_{16}^\times \simeq (\GL_2(\F_2)\times
  \GL_2(\F_2))/ \Delta(\GL_2(\F_2))$.
  For any unit $\zeta\in \calO^\times$, we have either $\zeta^4=1$ or
  $\zeta^6=1$, and $\Z[\zeta]$ coincides with the ring of integers of
  $\Q(\zeta)$. By a lemma of Serre, if $\zeta$ is a root of unity
  which is congruent to $1$ modulo $2$, then $\zeta=\pm 1$.  Thus, for
  $1\leq j\leq 3$, the map $C_{2j}\to \GL_2(\F_2)$ factors through an
  embedding $C_j\simeq (C_{2j}/C_2)\hookrightarrow \GL_2(\F_2)$. Note
  that $\GL_2(\F_2)\simeq S_3$.  Since cyclic subgroups of order $j$
  of $S_3$ are conjugate, the double coset space
  $(C_{j_1}\times C_{j_2})\backslash (S_3\times S_3) /\Delta(S_3)$
  does not depend on how $C_j$ embeds into $S_3$. Every element of
  $(S_3\times S_3)/\Delta(S_3)$ is represented by a unique $(a,1)$
  with $a\in S_3$. For $(c_1,c_2)\in C_{j_1}\times C_{j_2}$, one has
  $(c_1,c_2)\cdot (a,1)=(c_1s,c_2)\sim (c_1 a c_2^{-1},1)$.  The map
  $(a,1)\mapsto a$ yields a bijection
  $(C_{j_1}\times C_{j_2}) \backslash (S_3\times
  S_3)/\Delta(S_3)\simeq C_{j_1}\backslash S_3/C_{j_2}$.
  Therefore, there is a bijection
\[ \pi^{-1}([J])\simeq (C_{j_1}\times C_{j_2})\backslash
\GL_2(\F_2)^2/\Delta(\GL_2(\F_2))\simeq C_{j_1}\backslash
S_3/C_{j_2}.\] 

(2) This is clear if one of the $j_i$ is $1$ or $3$ as $C_3$ is a normal
subgroup of $S_3$. To see $c_{2,2}=2$, one may view $C_2$ as a Borel
subgroup of $S_3=\GL_2(\F_2)$; then the result follows from 
the Bruhat decomposition. 
\end{proof}

\begin{prop}\label{prop:O16-calc}
We have 
\[  h(\bbO_{16}(1,2))=\frac{(p-1)^2}{24}+\frac{1}{4}\left(1-\left
  (\frac{-4}{p}\right)\right)+\frac{2}{3}\left(1-\left
  (\frac{-3}{p}\right)\right)\quad \text{if } p\neq 2, 3.  
 \]
Moreover, if $p=3$, then $h(\bbO_{16}(1,2))=1$. 
\end{prop}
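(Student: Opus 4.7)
The plan is to compute $h(\bbO_{16}(1,2))$ by summing the cardinalities of the fibers of the surjective map $\pi\colon\Cl(\bbO_{16})\to\Cl(\bbO_1)$ already studied in~(\ref{eq:fiber}) and Lemma~\ref{sec:o16-double-coset-fiber}. The decomposition $\bbO_1=\calO\times\calO$ identifies $\Cl(\bbO_1)\cong\Cl(\calO)\times\Cl(\calO)$: a pair $([J_1],[J_2])$ corresponds to the class of $J_1\oplus J_2$, whose associated left order is $\calO_{J_1}\times\calO_{J_2}$ with unit group $\calO_{J_1}^\times\times\calO_{J_2}^\times$.

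Assume first $p>3$. The unit group of any maximal order in $D_{p,\infty}$ is isomorphic to $C_2$, $C_4$, or $C_6$, so $\calO_{J_i}^\times\simeq C_{2j_i}$ for some $j_i\in\{1,2,3\}$. Setting
\[H_j:=\#\{[J]\in\Cl(\calO)\mid \calO_J^\times\simeq C_{2j}\},\]
Lemma~\ref{sec:o16-double-coset-fiber}(2) gives
\[h(\bbO_{16})=\sum_{j_1,j_2=1}^{3} c_{j_1,j_2}\,H_{j_1}H_{j_2}.\]
The next step is to determine $H_2$ and $H_3$. A standard application of Eichler's optimal-embedding formula (alternatively: combining the mass formula $\sum_j H_j/(2j)=(p-1)/24$ with the class number formula~(\ref{eq:29}) and the embedding criterion $H_j\neq 0 \Leftrightarrow \Q(\zeta_{2j})\hookrightarrow D_{p,\infty}$ for $j=2,3$) yields
\[H_2=\tfrac{1}{2}\left(1-\left(\tfrac{-4}{p}\right)\right)\in\{0,1\},\qquad H_3=\tfrac{1}{2}\left(1-\left(\tfrac{-3}{p}\right)\right)\in\{0,1\},\]
and $H_1=h(\calO)-H_2-H_3$. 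Substituting into the double sum, reading the $c_{j_1,j_2}$ off the table, and using $H_j^2=H_j$ for $j=2,3$, the stated formula follows after a direct algebraic simplification.

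For $p=3$, (\ref{eq:29}) gives $h(\calO)=1$, so $\Cl(\bbO_1)=\{*\}$. The unit group $\calO^\times$ has order $12$ (consistent with the Eichler mass $1/12$), and by the Serre-type lemma used in the proof of Lemma~\ref{sec:o16-double-coset-fiber}(1) the reduction map $\calO^\times/\{\pm 1\}\hookrightarrow\GL_2(\F_2)\cong S_3$ is injective, hence an isomorphism by counting cardinalities. Arguing exactly as in Lemma~\ref{sec:o16-double-coset-fiber}(1), the fiber of $\pi$ over this unique class reduces to $S_3\backslash(S_3\times S_3)/\Delta(S_3)\cong S_3\backslash S_3$, a single point; hence $h(\bbO_{16}(1,2))=1$.

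The main obstacle will be extracting the precise formulas for $H_2$ and $H_3$, which relies on the triviality of $\Cl(\Z[i])$ and $\Cl(\Z[\zeta_6])$ together with the global mass formula; once these are in hand, the remaining algebraic simplification and the separate $p=3$ argument are routine bookkeeping.
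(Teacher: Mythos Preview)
Your proposal is correct and follows essentially the same approach as the paper: both compute $h(\bbO_{16})$ as $\sum_{j_1,j_2} c_{j_1,j_2}H_{j_1}H_{j_2}$ via Lemma~\ref{sec:o16-double-coset-fiber}, feed in the values $H_2=\tfrac12(1-(\tfrac{-4}{p}))$ and $H_3=\tfrac12(1-(\tfrac{-3}{p}))$, and handle $p=3$ by the same reduction to $S_3\backslash(S_3\times S_3)/\Delta(S_3)$. The only cosmetic differences are that the paper cites Vign\'eras directly for the $H_j$ while you sketch the optimal-embedding/mass argument, and the paper simplifies by writing $c_{j_1,j_2}=6/(j_1j_2)+\text{correction}$ so that the main term becomes $6\bigl(\sum_j H_j/j\bigr)^2=(p-1)^2/24$, whereas you use $H_j^2=H_j$; both routes are routine.
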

\begin{proof}
First suppose that $p=3$.  By \cite[Proposition~V.3.1]{vigneras}, we have
  $h(\calO)=1$, and $\calO^\times/\{\pm 1\} \simeq S_3$. It follows
  that $h(\bbO_1)=h(\calO)^2=1$, and hence
  $\Cl(\bbO_{16})=\pi^{-1}([\bbO_1])\simeq
  \bbO_1^\times\backslash\wh\bbO_1^\times/\wh\bbO_{16}^\times$ by
  (\ref{eq:41}).  The same line of argument as
  that of part (1) of Lemma~\ref{sec:o16-double-coset-fiber} shows
  that $h(\bbO_{16})=\abs{(S_3)^2\backslash (S_3)^2/\Delta(S_3)}=1$. 

Next, suppose that $p\neq 2, 3$. For $n=1,2,3$, put
\begin{equation}
  \label{eq:hn}
\Cl_n(\calO):=\{[I]\in\Cl(\calO) \mid O_l(I)^\times \simeq
C_{2n}\},\quad\text{and}\quad h_n=h_n(\calO):=\abs{\Cl_n(\calO)}.
\end{equation}
By \cite[Proposition~V.3.2]{vigneras}, if $p\neq 2,3$, then 
\begin{align}
  \label{eq:6}
  h_2(\calO)&=\frac{1}{2} \left(1-\left(\frac{-4}{p}\right) \right), \quad 
  h_3(\calO)=\frac{1}{2} \left(1-\left(\frac{-3}{p}\right) \right), \\
  \label{eq:7}
  \begin{split}
   h_1(\calO)&=h(\calO)-h_2(\calO)-h_3(\calO)\\
&=\frac{p-1}{12}-\frac{1}{4} \left(1-\left(\frac{-4}{p}\right)
\right)- \frac{1}{6} \left(1-\left(\frac{-3}{p}\right) \right). 
  \end{split}
   \end{align}

Since there are $h_{j_1} h_{j_2}$ classes $[J]\in \Cl(\bbO_1)$ with $\bbO_J^\times
\simeq C_{2j_1}\times C_{2j_2}$, it follows from
Lemma~\ref{sec:o16-double-coset-fiber} that 
\begin{equation}
  \label{eq:hO16-1}
  h(\bbO_{16})=\sum_{1\le j_1,j_2\le 3} h_{j_1} h_{j_2} c_{j_1,j_2}.   
\end{equation}
Observe that 
\[ c_{j_1,j_2}=
\begin{cases}
  \frac{6}{j_1 j_2} & \text{if $(j_1,j_2)\neq (2,2)$ or $(j_1,j_2)\neq
  (3,3)$}; \\
  \frac{6}{j_1 j_2}+\frac{1}{2} & \text{for $(j_1,j_2)= (2,2)$;}  \\  
  \frac{6}{j_1 j_2}+\frac{4}{3} & \text{for $(j_1,j_2)= (3,3)$.} \\  
\end{cases} \]
We can express (\ref{eq:hO16-1}) as
\begin{equation}
  \label{eq:hO16}
  \begin{split}
      h(\bbO_{16}(1,2))&=\sum_{1\le j_1,j_2\le 3} h_{j_1} h_{j_2}
  \frac{6}{j_1j_2}+\frac{1}{2} h_2^2 +\frac{4}{3} h_3^2 \\
  &=6\left(h_1+\frac{h_2}{2}+\frac{h_3}{3}\right)^2+
  \frac{1}{8}\left(1-\left
  (\frac{-4}{p}\right)\right)^2+ \frac{1}{3}\left(1-\left
  (\frac{-3}{p}\right)\right)^2 \\
  &=\frac{(p-1)^2}{24}+\frac{1}{4}\left(1-\left
  (\frac{-4}{p}\right)\right)+\frac{2}{3}\left(1-\left
  (\frac{-3}{p}\right)\right).     \qedhere
  \end{split} 
\end{equation}
\end{proof}


\section*{Acknowledgments}
J.~Xue is partially supported
by the 1000-plan program for young talents and Natural Science Foundation grant \#11601395 of PRC. He thanks Academia
Sinica and NCTS for their hospitality and great working conditions.
TC Yang and CF Yu are partially supported by the MoST grants 
104-2115-M-001-001MY3, 104-2811-M-001-066, 105-2811-M-001-108 and
107-2115-M-001-001-MY2. 


\bibliographystyle{hplain}
\bibliography{TeXBiB}
\end{document}